\documentclass[a4paper,12pt]{amsart}

\usepackage[english]{babel}
\usepackage{amsmath,amssymb,amsfonts,amsthm,enumerate}
\usepackage{url}
\usepackage{graphicx,epstopdf,color}
\usepackage[colorlinks=true, allcolors=blue]{hyperref}
\usepackage{amsrefs}
\usepackage{mathtools}
\usepackage[shortlabels]{enumitem}
\usepackage{appendix}
\usepackage{tikz}
\usepackage{subfig}
\captionsetup[subfigure]{labelfont=rm}

\oddsidemargin = -15pt \evensidemargin = -15pt \textwidth = 488pt
 
\numberwithin{equation}{section}
\allowdisplaybreaks

\newtheorem{theorem}{Theorem}[section]
\newtheorem{lemma}[theorem]{Lemma}
\newtheorem{proposition}[theorem]{Proposition}
\newtheorem{corollary}[theorem]{Corollary}

\theoremstyle{remark}
\newtheorem{definition}[theorem]{Definition}
\newtheorem{remark}[theorem]{Remark}

\newcommand{\R}{\mathbb{R}}

\newcommand{\N}{\mathbb{N}}

\renewcommand{\emptyset}{\varnothing}
\renewcommand{\epsilon}{\varepsilon}
\newcommand{\eps}{\varepsilon}
\newcommand{\F}{\mathcal{F}}

\newcommand{\E}{\mathbf{E}}

\renewcommand{\leq}{\leqslant}

\renewcommand{\geq}{\geqslant}

\newcommand{\per}{\operatorname{Per}}

\newcommand{\dist}{\operatorname{dist}}

\allowdisplaybreaks

\title[Weighted nonlocal area functionals]
{Weighted nonlocal area functionals \\
without the triangle inequality}

\author[S.~Dipierro]{Serena Dipierro}

\author[E.~Valdinoci]{Enrico Valdinoci}

\author[M. Vaughan]{Mary Vaughan}

\address[S. Dipierro, E.~Valdinoci, M.~Vaughan]{The University of Western Australia, 
Department of Mathematics and Statistics,
35 Stirling HWY,
Crawley WA 6009, Australia}
\email{serena.dipierro@uwa.edu.au}
\email{enrico.valdinoci@uwa.edu.au}
\email{mary.vaughan@uwa.edu.au}

\keywords{Nonlocal perimeters, 
nonlocal phase transitions,
clusters,
gamma-convergence}

\subjclass[2020]{49Q20, 
49J45, 
35R11
}

\begin{document}

\maketitle


\begin{abstract}
We consider a weighted nonlocal area functional in which the coefficients do not satisfy the triangle inequality. 
In the context of three phase transitions, this means that one of the weights is larger than the sum of the other two, say
$$\sigma_{-1,1} > \sigma_{-1,0} + \sigma_{0,1}.$$
We show that the energy can be reduced by covering interfaces between phases $-1$ and $1$ with a thin strip of phase $0$. 

Moreover, as the fractional parameter $s\nearrow1$,  we prove that the nonlocal energies $\Gamma$-converge to a local area functional with different weights. 

The functional structure of this long-range interaction model is conceptually different from its
classical counterpart, since the functional remains lower semicontinuous, even in the absence
of the triangle inequality.
\end{abstract}

\section{Introduction}
Variational problems often involve multi-phase systems or multi-material interfaces. In this setting, a key structural condition is the so-called ``triangle inequality'', a constraint on the interfacial energy coefficients that ensures energetic stability. More precisely, such a condition guarantees that a direct interface between any two phases is energetically more favorable (or at least not worse) than transitioning through an intermediate phase. This condition permits and often favors direct contact between the extreme phases.

In the absence of the triangle inequality, classical cluster configurations exhibit various pathologies, such as wetting and layering effects, typically arising from the lack of lower semicontinuity in the energy functional (see e.g.~\cite{MR1051228, MR1833000}).

In this paper, we investigate the long-range interaction analogue of this situation. That is, we study a model in which the interfacial energy arises from nonlocal particle interactions, governed by a fat-tailed kernel. The structure of the resulting interfaces differs markedly from the classical case, as the associated energy functional is always lower semicontinuous, even when the triangle inequality is violated.

Nevertheless, in the absence of the triangle inequality,
layering phenomena still occur, favoring intermediate phase transitions. However, in contrast to the local setting, the nonlocal model assigns only quantitatively small energy to thin intermediate layers. 

We also show that the classical local setting emerges as a limit case. Interestingly, the different functional structures in the local and nonlocal frameworks lead to distinct notions of convergence and limiting behavior.
\medskip

More specifically, in this article, we study the weighted nonlocal energy functional 
\begin{equation}\label{eq:energyintro}
\F^s(\{E_{-1},E_0,E_1\}) = \sum_{-1 \leq i <j \leq 1} \sigma_{i,j} \iint_{(E_i \times E_j) \setminus (\Omega^c \times \Omega^c)} \frac{dxdy}{|x-y|^{n+s}},
\end{equation}
where $s \in (0,1)$, $n\geq 1$,  $\Omega \subset \R^n$ is a bounded domain, $\sigma_{i,j} = \sigma_{j,i}>0$, and $\{E_{-1}, E_0, E_1\}$ is a 3-part partition of $\R^n$. 
We primarily focus on when one of the weights, say $\sigma_{-1,1}$, is larger than the sum of the other two, since such energies arise naturally in nonlocal multi-phase coexistence models, see Section~\ref{sec:phase}. 
In this setting, we prove that the energy can be reduced if there is a nontrivial interface between phases $1$ and $-1$ and establish the $\Gamma$-convergence of $(1-s) \F^s$ to a local area functional. 

The energies~\eqref{eq:energyintro} are nonlocal counterparts to immiscible fluid problems~\cites{Almgren, Leonardi,White} in which three immiscible fluids $E_1,E_0,E_{-1}$ occupy the container $\Omega$. 
Indeed, each integral can be seen as the nonlocal area of an interface between the two disjoint sets $E_i$ and $E_j$ for $i\not=j$ and $\sigma_{i,j}$ is the surface tension between fluids $E_i$ and $E_j$.
 
In the seminal paper~\cite{CRS}, Caffarelli, Roquejoffre, and Savin introduced the nonlocal perimeter of a single measurable set by taking into account long-range interactions between the set and its complement. This corresponds to a 2-part partition of $\R^n$ with energy
\begin{equation}\label{eq:perimintro}
\per_{\Omega}^s(E) = \iint_{(E \times E^c) \setminus (\Omega^c \times \Omega^c)} \frac{dxdy}{|x-y|^{n+s}}. 
\end{equation}

We define the nonlocal area between two disjoint sets $A$ and $B$, not necessarily forming a 2-part partition, by considering only the long-range interactions between $A$ and $B$.  

\begin{definition}\label{defn:ij-per}
For a bounded domain $\Omega \subset \R^n$ and disjoint, measurable sets $A, B \subset \R^n$,  define
\[
\per^s_\Omega(A,B) := \iint_{( A \times B) \setminus (\Omega^c \times \Omega^c)} \frac{dxdy}{|x-y|^{n+s}}.
\]
\end{definition}

The special case $B= A^c$ corresponds to the nonlocal perimeter of $A$ in~$\Omega$:
\[
\per^s_\Omega(A)= \per^s_\Omega(A,A^c).
\] 
On the other hand, if $|\R^n \setminus (A \cup B)
|>0$, then 
$\per^s_\Omega(A,B)$ ignores all interactions coming from $\R^n \setminus (A \cup B)$. Notice also that $\per_{\Omega}^s(A,B)>0$ as long as $A,B\not= \emptyset$, even if the two sets do not share an interface. Further properties will be discussed in Sections~\ref{sec:prelim} and~\ref{sec:uniformestimates}. 

\medskip

Let~$\Omega \subset \R^n$ be a bounded domain with Lipschitz boundary. 
We say that $\E := \{E_{-1},E_0, E_1\}$ is \emph{admissible} if it forms a 3-part partition of $\R^n$ (also called a 3-cluster), namely
\[
\left|\R^n \setminus \bigcup_{-1 \leq i \leq 1} E_i\right| = 0 \quad \hbox{and} \quad |E_i \cap E_j| = 0~\hbox{for}~i\not=j.
\]
Then, we can write~\eqref{eq:energyintro} as 
\begin{equation}\label{eq:sigma-energy}
\F^s(\E)
	= \sum_{-1 \leq i < j \leq 1} \sigma_{i,j}\per^s_\Omega(E_i, E_j) \quad \hbox{for}~\E~\hbox{admissible},
\end{equation}
where $\sigma_{i,j}= \sigma_{j,i}>0$ is the surface tension at the interface of~$E_i$ and~$E_j$.

We write $\F^1(\E)$ to include the classical case (corresponding to~$s=1$) in which $\per^1_\Omega(E_i, E_j)=\per_\Omega(E_i, E_j)$ is the classical area in~$\Omega$ of the interface separating $E_i$ and $E_j$. Similarly, we write $\per^1_{\Omega}(E) = \per_{\Omega}(E)$ to denote the classical perimeter of $E$ in~$\Omega$, see~\cite{Giusti}.  

\medskip

We now present our main results. 

\subsection{The triangle inequality}

Our first key observation is that $\F^s$ is lower semicontinuous. 

We say that $\E^k \to \E$  in $L^1_{\text{loc}}(\R^n)$ if $\chi_{E_i^k} \to \chi_{E_i}$ in $L^1_{\text{loc}}(\R^n)$ as $k \to \infty$ for each $-1 \leq i \leq 1$. 

\begin{proposition}[Lower semicontinuity]\label{prop:lsc}
For $s \in (0,1)$, the functional $\mathcal{F}^s$ is lower semicontinuous. In particular, for admissible $\E^k$ and $\E$, if $\E^k \to \E$  in $L^1_{\text{loc}}(\R^n)$, then
\[
\liminf_{k \to \infty} \F^s(\E^k) \geq \F^s(\E).
\]
\end{proposition}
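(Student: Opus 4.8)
The plan is to reduce the lower semicontinuity of $\F^s$ to the lower semicontinuity of each individual term $\per^s_\Omega(A,B)$ under $L^1_{\mathrm{loc}}$ convergence, and then exploit the nonnegativity of the weights $\sigma_{i,j}$. Since $\F^s(\E) = \sum_{i<j} \sigma_{i,j}\per^s_\Omega(E_i,E_j)$ with all $\sigma_{i,j}>0$, it suffices to show that for each fixed pair $i<j$,
\[
\liminf_{k\to\infty} \per^s_\Omega(E_i^k, E_j^k) \geq \per^s_\Omega(E_i, E_j),
\]
because a sum of liminf's is bounded above by the liminf of the sum (here with positive coefficients, so the inequality goes the right way). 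Thus the crux is a single-term statement: nonlocal area between two disjoint sets is lower semicontinuous.

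First I would write $\per^s_\Omega(A,B) = \iint_{\R^n\times\R^n} K(x,y)\,\chi_A(x)\,\chi_B(y)\,dx\,dy$ where $K(x,y) = |x-y|^{-n-s}\,\chi_{(\Omega^c\times\Omega^c)^c}(x,y) \geq 0$ is a fixed nonnegative measurable kernel, independent of $k$. Given $\E^k \to \E$ in $L^1_{\mathrm{loc}}$, pass to a subsequence (along which the liminf is attained) so that $\chi_{E_i^k}\to\chi_{E_i}$ almost everywhere in $\R^n$, for each $i$. Then the integrand $K(x,y)\,\chi_{E_i^k}(x)\,\chi_{E_j^k}(y)$ is nonnegative and converges a.e.\ on $\R^n\times\R^n$ to $K(x,y)\,\chi_{E_i}(x)\,\chi_{E_j}(y)$. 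Fatou's lemma then gives directly
\[
\liminf_{k\to\infty}\per^s_\Omega(E_i^k,E_j^k) \geq \per^s_\Omega(E_i,E_j),
\]
and summing against $\sigma_{i,j}>0$ yields the claim. The only point requiring a word of care is the passage to an a.e.-convergent subsequence: $L^1_{\mathrm{loc}}$ convergence of characteristic functions gives a.e.\ convergence only along a subsequence, but since we are proving a $\liminf$ inequality we may first select a subsequence realizing $\liminf_k \F^s(\E^k)$ as a genuine limit, then extract a further subsequence converging a.e.; the $\liminf$ of the original sequence is no smaller.

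I expect no serious obstacle here — the mechanism is simply Fatou applied to a nonnegative integrand with a fixed kernel, and positivity of the $\sigma_{i,j}$ is exactly what makes the weighted sum behave well (in particular, the absence of the triangle inequality plays no role, which is precisely the structural point the paper emphasizes). The only mild subtlety worth stating explicitly is that no cancellation can occur: every term in $\F^s$ is nonnegative, so there is no competition between interfaces, unlike in the classical setting where lower semicontinuity can fail. One might also remark that the result in fact holds for all $s\in(0,1)$ uniformly and does not use boundedness or Lipschitz regularity of $\Omega$ beyond what guarantees the kernel $K$ is well-defined and nonnegative.
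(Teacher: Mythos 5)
Your proof is correct and follows essentially the same route as the paper: the paper cites \cite{CRS}*{Proposition 3.1} for the single-term lower semicontinuity, which is exactly the Fatou argument you spell out (pass to a subsequence realizing the liminf, extract a further a.e.-convergent subsequence, apply Fatou to the nonnegative integrand against the fixed kernel), and then concludes from $\sigma_{i,j}>0$ together with superadditivity of the liminf. You have merely unfolded the citation; there is no substantive difference in approach.
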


The result in Proposition~\ref{prop:lsc} is technically simple,
but conceptually deep, since it highlights an important structural difference with respect to
the classical case, which impacts also the quantitative
analysis of the energy competitors, the pointwise limit of the energy functional
as~$s\nearrow1$, as well as its limit in the sense of~$\Gamma$-convergence.

Indeed,
the fact that there is no additional assumption on the surface tensions $\sigma_{i,j}>0$ is in stark contrast to the classical case. 
As a matter of fact, it was
 observed by White in~\cite{White} and by Ambrosio--Braides in~\cite{AmbrosioBraides}
 that the local energy $\F^1$ is lower semicontinuous if and only if the  coefficients satisfy a so-called ``triangle inequality,'' see also~\cite{Morgan}.
 
For this, define $\alpha_i$ for $-1 \leq i \leq 1$ by 
\begin{equation}\label{eq:alpha}
\begin{aligned}
\alpha_{-1} &:= \frac12 ( \sigma_{-1,1} + \sigma_{-1,0} - \sigma_{0,1}) ,\\
\alpha_{0} &:=  \frac12(\sigma_{-1,0} + \sigma_{0,1} - \sigma_{-1,1}), \\
\alpha_{1} &:= \frac12(\sigma_{-1,1} + \sigma_{0,1} - \sigma_{-1,0}). 
\end{aligned}
\end{equation}
We say that
\begin{center}
$\{\sigma_{i,j}\}_{-1 \leq i<j\leq1}$ satisfies the \emph{triangle inequality} if $\alpha_i\geq 0$ for all $-1 \leq i \leq 1$.
\end{center}

With $\alpha_i$ given in~\eqref{eq:alpha}, one can check (with Lemma~\ref{lem:JAB}) that~\eqref{eq:sigma-energy} is equivalently given by 
\begin{equation}\label{eq:alpha-energy}
\F^s(\E)
	=  \sum_{-1 \leq i \leq 1} \alpha_i \per^s_\Omega(E_i). 
\end{equation}
Energies \emph{defined} as~\eqref{eq:alpha-energy} with positive weights $\alpha_i>0$ have been studied in regards to nonlocal clusters, see~\cites{Cesaroni-Novaga,Colombo-Maggi}.
In our setting, we emphasize that the coefficients $\alpha_i$ in~\eqref{eq:alpha-energy} cannot be viewed as ``arbitrary'' since they are uniquely determined by the $\sigma_{i,j}$. 
For instance, the relation
\begin{equation}\label{eq:alphasums}
\alpha_i + \alpha_j =  \sigma_{i,j} >0 \quad \hbox{for all}~i\not=j
\end{equation}
implies that at most one of the $\alpha_i$ can be nonpositive. 

We stress that the long-range interactions provide significant quantitative
and qualitative differences regarding the structure of the corresponding clusters. To better understand these differences, it will be useful to revisit the classical case
under a ``nonlocal'' perspective.
To this end,
we now describe in more detail White's observations in~\cite{White}*{Section 2}
for classical clusters. 

Suppose that $\alpha_0<0$. 
If an admissible~$\E$ has a nontrivial (Lipschitz)  interface between phases 
 -1 and 1, then one can cover it with a thin strip of phase 
 $0$ of width $\eps>0$ and the corresponding $\E^\eps$ has strictly smaller energy, see Figure~\ref{fig:introstrip}. Moreover, for $\eps>0$ sufficiently small, the quantity $\F^1(\E^\eps)$ can be made arbitrarily close to $\F^*(\E)$ which is given by
\begin{equation}\label{eq:F*}
\F^*(\E) = \sum_{-1 \leq i <j\leq 1} \sigma_{i,j}^* \per_\Omega (E_i, E_j)
\end{equation}
with the new surface tensions $\sigma_{i,j}^*>0$  given by
\[
\sigma_{-1,1}^* = \sigma_{-1,0} + \sigma_{0,1}, \quad \sigma_{0,1}^* = \sigma_{0,1}, \quad \hbox{and} \quad \sigma_{-1,0}^* = \sigma_{-1,0}.
\] 
In particular, 
\begin{equation}\label{eq:f1fstar}
\F^1(\E) = \F^*(\E) - 2\alpha_0 \per_{\Omega}(E_1, E_{-1}),
\end{equation}
and, since $\per_{\Omega}(E_1^\eps, E_{-1}^\eps) = 0$, 
one obtains 
\begin{equation}\label{eq:white}\begin{split}
&\F^1(\E) - \F^1(\E^\eps)
	= \F^1(\E) - \F^*(\E^\eps)
\\&\qquad=
\F^*(\E) - \F^*(\E^\eps)- 2\alpha_0 \per_{\Omega}(E_1, E_{-1})
\geq 2|\alpha_0| \per_{\Omega}(E_1, E_{-1}) - 2C(\sigma_{0,1} + \sigma_{-1,0})\eps
\end{split}
\end{equation}
for some $C>0$, independent of $\eps$.
Consequently, the difference in energy in~\eqref{eq:white} is of order 1 and $\F^1$ is not lower semicontinuous. \medskip

\begin{figure}[hbt]
\begin{center}
\subfloat[An admissible $\E$]{
\label{fig:physical-1}
\begin{tikzpicture}[scale=3]
\draw[magenta,thick] (0,1.2)--(0,0);
\draw[cyan,thick] (1.039,-.6)--(0,0);
\draw[blue,thick] (-1.039,-.6)--(0,0);
\draw[thick] (0,0) circle (1cm);
\node[] at (0,-.5) { $E_0$};
\node[] at (.433,.25) { $E_{-1}$};
\node[] at (-.433,.25) { $E_{1}$};
\node[] at (1.0825,.625) {\large $\Omega$};
\node[magenta] at (.15,.77) {\scriptsize $\sigma_{-1,1}$};
\node[cyan] at (.72,-.31) {\scriptsize $\sigma_{-1,0}$};
\node[blue] at (-.72,-.32) {\scriptsize $\sigma_{0,1}$};
\end{tikzpicture}
}
\hspace{1.5cm}
\subfloat[The corresponding $\E^\varepsilon$]{
\label{fig:physical-i}
\begin{tikzpicture}[scale=3]
\draw[cyan,thick] (0.07 ,.998)--(.07,0);
\draw[cyan,thick] (0,0)--(.07,0);
\draw[blue,thick] (-0.07,.998)--(-.07,0);
\draw[blue,thick] (-.07,0)--(0,0);
\draw[cyan,thick] (1.039,-.6)--(0,0);
\draw[blue,thick] (-1.039,-.6)--(0,0);
\draw[thick] (0,0) circle (1cm);
\node[] at (0,-.5) { $E_0^\varepsilon$};
\node[] at (.433,.25) { $E_{-1}^\varepsilon$};
\node[] at (-.433,.25) { $E_{1}^\varepsilon$};
\node[] at (1.0825,.625) {\large $\Omega$};
\node[] at (0,.6) { \scriptsize $E_0^\varepsilon$};
\draw[<->] (-.065,.2)--(.065,.2);
\node[] at (0,.24) {\scriptsize $\varepsilon$};
\draw[magenta,thick] (0,1)--(0,1.2);
\draw[magenta,thin, densely dotted] (0,1)--(0,.67);
\draw[magenta,thin, densely dotted] (0,.54)--(0,.29);
\draw[magenta,thin, densely dotted] (0,.16)--(0,0);
\node[cyan] at (.72,-.31) {\scriptsize $\sigma_{-1,0}$};
\node[cyan] at (.22,.77) {\scriptsize $\sigma_{-1,0}$};
\node[blue] at (-.72,-.32) {\scriptsize $\sigma_{0,1}$};
\node[blue] at (-.19,.77) {\scriptsize $\sigma_{0,1}$};
\end{tikzpicture}
}
\end{center}
\caption{Covering an interface $\partial E_1 \cap \partial E_{-1} \cap \Omega$ with an $\eps$-strip of phase $0$}
\label{fig:introstrip}
\end{figure}
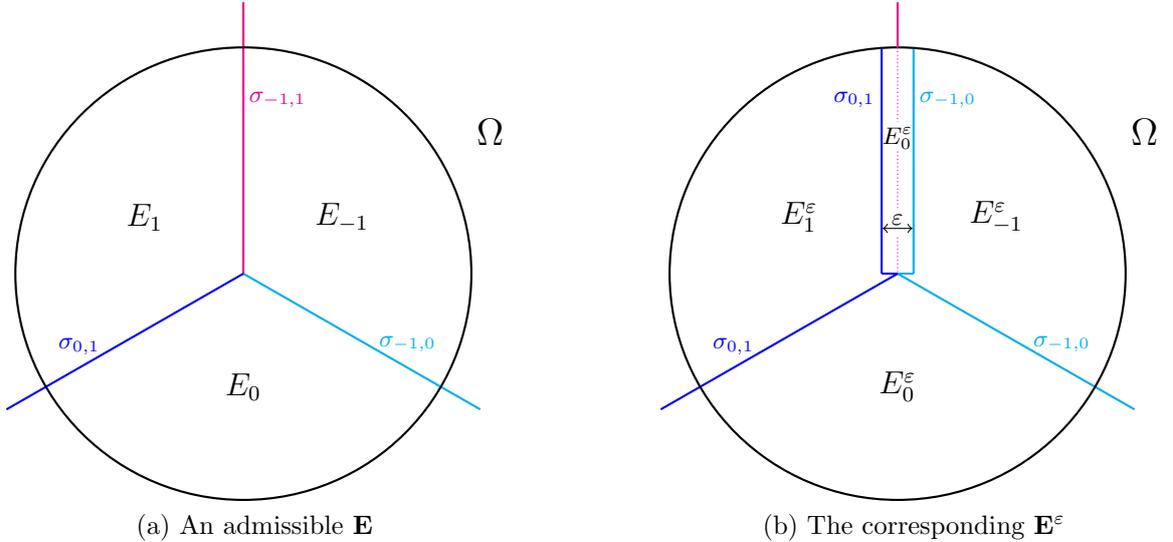

With respect to this construction, at a first glance, the functional discrepancy 
in the absence of a triangular
inequality
between the
nonlocal case (which is lower semicontinuous) and the classical counterpart
(which is not lower semicontinuous) may seem problematic or even contradictory,
since one may wonder what is the nonlocal analogue of
the ``energy bubble''
lost in the classical case. To appreciate this, a quantitative analysis is needed
to compare the energy of the competitor~$\E^\varepsilon$
obtained from an~$\varepsilon$-strip of the intermediate phase
in the nonlocal case
and to compare this result with the classical
counterpart. We will see that 
in the nonlocal case the energy bubble produced with this procedure is of order~$\varepsilon^{1-s}$ and this is both
compatible with the
lower semicontinuity of the energy functional (since the energy of this bubble vanishes as~$\varepsilon\searrow0$) and consistent with the lack of semicontinuity of the classical case
(since the energy of the bubble becomes independent of~$\varepsilon$ as~$s\searrow1$).

Namely, when $s \in (0,1)$, we consider the same admissible $\E$ and $\E^{\eps}$
as in the classical case, but
unlike in the local problem, there are long-range interactions between the sets $E_1^\eps$ and $E_{-1}^\eps$, so that, at least formally,
\[
\lim_{\eps \searrow 0} [\per_{\Omega}^s(E_1, E_{-1}) - \per_{\Omega}^s(E_1^\eps, E_{-1}^\eps)] =0.
\]
In this way, we prove that the nonlocal energy of $\E^\eps$ is strictly smaller than that of $\E$,
even in the nonlocal case, but the difference in energy is of order $\eps^{1-s}$, which is negligible if (and only if) $s\in(0,1)$.
 
We give an informal statement of the result here; see Theorem~\ref{thm:reduce} for the precise statement.

\begin{theorem}\label{thm:reduceintro}
Assume that $\alpha_0<0$ and $s \in (0,1)$. If $\partial E_1 \cap \partial E_{-1} \cap K$ is Lipschitz for some compact $K \Subset \Omega$ with small Lipschitz norm, then there are constants $C>0$ and $0 < \eps_0<1$, such that, for all $0  < \eps < \eps_0$, there exists an admissible~$\E^\eps$ satisfying
\begin{equation}\label{eq:sdifference}
\F^s(\E) - \F^s(\E^\eps) \geq C \eps^{1-s}. 
\end{equation}
\end{theorem}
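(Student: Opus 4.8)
\medskip
\noindent\textbf{Strategy of the proof.}
The plan is to carry out the nonlocal counterpart of White's construction from \eqref{eq:f1fstar}--\eqref{eq:white}, being careful to track the power of $\eps$. Write $\Gamma:=\partial E_1\cap\partial E_{-1}\cap K$; since the statement is only meaningful for a nontrivial interface, I would assume $\mathcal H^{n-1}(\Gamma)>0$ and --- after shrinking $K$ --- that in suitable coordinates $\Gamma=\{x_n=\gamma(x')\}$ is a graph with $\|\nabla\gamma\|_\infty$ small, admitting an open neighborhood $N\Subset\Omega$ that meets only the phases $E_1$ and $E_{-1}$, these lying on the two sides of $\Gamma$ in $N$. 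Then, for $\eps\in(0,\eps_0)$ with $\eps_0$ small, the open tube $S^\eps:=\{x\in\R^n:\dist(x,\Gamma)<\eps/2\}$ is contained in $N$, so up to a null set $S^\eps\subset E_1\cup E_{-1}$. I would take as competitor the admissible cluster
\[
E_0^\eps:=E_0\cup S^\eps,\qquad E_{\pm1}^\eps:=E_{\pm1}\setminus S^\eps,\qquad T_{\pm1}:=E_{\pm1}\cap S^\eps,
\]
so that $E_{\pm1}=E_{\pm1}^\eps\sqcup T_{\pm1}$. Introducing the modified surface tensions $\sigma^*_{-1,1}:=\sigma_{-1,0}+\sigma_{0,1}$, $\sigma^*_{-1,0}:=\sigma_{-1,0}$, $\sigma^*_{0,1}:=\sigma_{0,1}$ and the energy $\F^{*,s}(\cdot):=\sum_{i<j}\sigma^*_{i,j}\per^s_\Omega(E_i,E_j)$, one checks that $\alpha^*_0=0$, $\alpha^*_{-1}=\sigma_{-1,0}$, $\alpha^*_1=\sigma_{0,1}$, so the computation behind \eqref{eq:alpha-energy} (applied to $\sigma^*$) gives $\F^{*,s}(\cdot)=\sigma_{-1,0}\per^s_\Omega(E_{-1})+\sigma_{0,1}\per^s_\Omega(E_1)$; since also $\sigma_{-1,1}=\sigma^*_{-1,1}-2\alpha_0$ and $\alpha_0<0$, \eqref{eq:sigma-energy} yields
\[
\F^s(\E)-\F^s(\E^\eps)
=\underbrace{\bigl[\F^{*,s}(\E)-\F^{*,s}(\E^\eps)\bigr]}_{\mathrm{(I)}}
\;+\;2|\alpha_0|\underbrace{\bigl[\per^s_\Omega(E_{-1},E_1)-\per^s_\Omega(E_{-1}^\eps,E_1^\eps)\bigr]}_{\mathrm{(II)}},
\]
and I would estimate $\mathrm{(I)}$ and $\mathrm{(II)}$ separately.

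For the gain term, I would use bi-additivity of $\per^s_\Omega$ (Lemma~\ref{lem:JAB}) together with $E_{\pm1}^\eps\subset E_{\pm1}$ to get $\mathrm{(II)}=\per^s_\Omega(E_{-1}^\eps,T_1)+\per^s_\Omega(T_{-1},E_1^\eps)+\per^s_\Omega(T_{-1},T_1)\ge\per^s_\Omega(T_{-1},T_1)$, the domain restriction being inactive since $T_{\pm1}\subset\Omega$. Then I would flatten $\Gamma$ by a bi-Lipschitz map (which distorts the kernel only by a factor $1+O(\|\nabla\gamma\|_\infty)$) and absorb into an $O(\eps)=O(\eps^s\cdot\eps^{1-s})$ remainder the bounded interactions of $S^\eps$ with the region at fixed positive distance from $\Gamma$ (in particular near $\partial N$); this reduces $\per^s_\Omega(T_{-1},T_1)$, up to an error $O\bigl((\|\nabla\gamma\|_\infty+\eps^s)\eps^{1-s}\bigr)$, to the flat-slab integral
\[
c_{n,s}\,\mathcal H^{n-1}(\Gamma)\int_0^{\eps/2}\!\!\int_0^{\eps/2}\frac{dr\,dt}{(t+r)^{1+s}}
\;=\;c_{n,s}\,\frac{2^{s}-1}{s(1-s)}\,\mathcal H^{n-1}(\Gamma)\,\eps^{1-s},
\]
with $c_{n,s}:=\int_{\R^{n-1}}(1+|z'|^2)^{-(n+s)/2}\,dz'$; here the factor $(1-s)^{-1}=\int_0^1 r^{-s}\,dr$ is exactly what produces the exponent $1-s$. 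Hence $\mathrm{(II)}\ge\bigl(c_{n,s}\tfrac{2^{s}-1}{s(1-s)}\,\mathcal H^{n-1}(\Gamma)-C(\|\nabla\gamma\|_\infty+\eps^s)\bigr)\eps^{1-s}$.

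For the error term, since $\F^{*,s}$ is a fixed linear combination of single-set nonlocal perimeters, it would suffice to control $\per^s_\Omega(E_i)-\per^s_\Omega(E_i^\eps)$ for $i=\pm1$. Using $E_i=E_i^\eps\sqcup T_i$ and bi-additivity, this difference equals $\per^s_\Omega(T_i,E_i^c)-\per^s_\Omega(T_i,E_i^\eps)$; near $\Gamma$ the sets $E_i^c$ and $E_i^\eps$ are exactly the two half-spaces complementary to the slab $T_i$, and the reflection symmetry of that slab forces the two leading (flat) contributions to coincide, hence to cancel. Bounding the geometric distortion and the far-field terms as above, I would obtain
\[
\bigl|\per^s_\Omega(E_i)-\per^s_\Omega(E_i^\eps)\bigr|\ \le\ C\bigl(\|\nabla\gamma\|_\infty+\eps^s\bigr)\eps^{1-s}\qquad(i=\pm1),
\]
so that $|\mathrm{(I)}|\le C(\sigma_{-1,0}+\sigma_{0,1})(\|\nabla\gamma\|_\infty+\eps^s)\eps^{1-s}$. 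Combining the two bounds,
\[
\F^s(\E)-\F^s(\E^\eps)\ \ge\ \Bigl(2|\alpha_0|\,c_{n,s}\tfrac{2^{s}-1}{s(1-s)}\,\mathcal H^{n-1}(\Gamma)-C(1+\sigma_{-1,0}+\sigma_{0,1})\bigl(\|\nabla\gamma\|_\infty+\eps^s\bigr)\Bigr)\eps^{1-s},
\]
which is $\ge C\eps^{1-s}$ once the Lipschitz norm of $\Gamma$, and then $\eps_0$, are taken small enough.

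The hardest part will be the two-sided estimate for $\mathrm{(I)}$, i.e.\ showing that excising the thin strip $T_i$ changes the single-set nonlocal perimeter $\per^s_\Omega(E_i)$ only by $o(\eps^{1-s})$: one must check that the two faces of $T_i$ contribute \emph{cancelling} $\eps^{1-s}$-terms, which is transparent when $\Gamma$ is flat but in general requires controlling the fractional kernel under the bi-Lipschitz flattening of $\Gamma$. It is precisely here that the \emph{smallness} of the Lipschitz norm enters --- in contrast with the local setting, where plain Lipschitz regularity suffices because there the gain is of order $1$ and the geometric error only of order $\eps$, whereas in the nonlocal setting the gain and the error are of the same order $\eps^{1-s}$.
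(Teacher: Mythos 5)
Your proposal is correct in its conclusions and in the key mechanism (strip of width $\sim\eps$, gain of order $\eps^{1-s}$, error of order $\eps$ plus a Lipschitz‐proportional $\eps^{1-s}$ correction), but it follows a genuinely different route from the paper. The paper covers the interface with a \emph{one-sided} strip $A^\eps\subset E_1$ of width $\eps$ on the $E_1$-side only, keeping $E_{-1}$ unchanged; the energy difference is then computed directly from Corollary~\ref{cor:one-sided-strip}, which yields the two-term identity $\F^s(\E)-\F^s(\E^\eps)=2|\alpha_0|L(A^\eps,E_{-1})-\sigma_{0,1}\bigl[L(A^\eps,E_1\cap(A^\eps)^c)-L(A^\eps,E_1^c)\bigr]$. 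The gain $L(A^\eps,E_{-1})$ is a strip-to-halfspace interaction bounded below by $C\eps^{1-s}$, and the error bracket is controlled by a single reflection $x_n\mapsto -x_n+\psi(x')+\eps$ that pairs $E_1^c$ with $E_1\cap(A^\eps)^c$, leaving only far-field leftovers of size $O(\eps)$. You instead use a \emph{symmetric} tube $S^\eps$ straddling $\Gamma$, modify both $E_{\pm1}$, and reorganize the energy via the White-type decomposition $\F^s=\F^{*,s}+2|\alpha_0|\per^s_\Omega(E_{-1},E_1)$; your gain is the strip-to-strip term $\per^s_\Omega(T_{-1},T_1)\gtrsim\eps^{1-s}$, and the error $\F^{*,s}(\E)-\F^{*,s}(\E^\eps)$ reduces to controlling $\per^s_\Omega(E_i)-\per^s_\Omega(E_i^\eps)=L(T_i,E_i^c)-L(T_i,E_i^\eps)$ for both $i=\pm1$. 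This is a cleaner conceptual parallel to the classical computation \eqref{eq:f1fstar}--\eqref{eq:white}, but it is technically more demanding than the paper's choice: you need \emph{two} separate reflection cancellations (one per sign), each between sets that sandwich $T_i$ rather than being $\eps$-translates, so the cancellation at order $\eps^{1-s}$ must be established up to a $\|\nabla\gamma\|_\infty$-proportional error coming from the bi-Lipschitz flattening (your estimate $O((\|\nabla\gamma\|_\infty+\eps^s)\eps^{1-s})$ is plausible and matches a back-of-the-envelope check, though it would need to be written out carefully, including the mild discrepancy between the distance tube $S^\eps$ and a graph-based slab). You correctly identify this as the hardest step and correctly point out that the \emph{smallness} of the Lipschitz norm is essential here, precisely because the nonlocal gain and the geometric error are of the same order $\eps^{1-s}$ --- this matches hypothesis \eqref{eq:psi} in the paper. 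The paper's one-sided competitor avoids the second cancellation entirely and thus keeps the error analysis lighter; both routes yield the same rate.
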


Notice that sending $s \nearrow 1$ in~\eqref{eq:sdifference} is consistent with~\eqref{eq:white}. However, when~$s\in(0,1)$,
\[
\lim_{\eps \searrow 0} [\F^s(\E) -\F^s(\E^\eps)] \geq 0
\]and therefore
there is no contradiction to the lower semicontinuity in Proposition~\ref{prop:lsc}. 

\medskip

We say that an admissible~$\E$ is a \emph{minimizer} for $\F^s$  in~$\Omega$ if for any admissible $\widetilde{\E}$ with $\widetilde{E}_i \cap \Omega^c = E_i \cap \Omega^c$ for each $-1 \leq i \leq 1$, we have
\begin{equation}\label{eq:min}
\F^s(\E) \leq \F^s(\widetilde{\E}). 
\end{equation}
We say that $\E$ is a \emph{local minimizer} if~\eqref{eq:min} holds for any $\widetilde{\E}$ satisfying $\widetilde{E}_i \cap (\Omega')^c = E_i \cap (\Omega')^c$ for all $\Omega' \Subset \Omega$ and each $-1 \leq i \leq 1$.
We can similarly define the notion of minimizers and local minimizers of $\F^*$ (and $\F^1$) in~$\Omega$.

In light of Theorem~\ref{thm:reduceintro}, one expects that local minimizers $\E$ of $\F^s$ are such that there is no transition directly from phase~$-1$ to phase~$1$; namely, one must pass nontrivially through phase~$0$.
We leave the question of regularity as future work.

\subsection{Convergence as $s\nearrow1$}

Let~$\omega_n$ denote the measure of the unit ball in $\R^n$, $n \geq 1$, and~$\omega_0 = 1$. To emphasize the dependence on the domain, we will sometimes use the notation, for~$s \in (0,1]$,
\begin{equation}\label{eq:Fomega}
\F^*(\E,\Omega) = \sum_{-1 \leq i<j \leq 1} \sigma_{i,j}^* \per_\Omega(E_i,E_j), \qquad
\F^s(\E,\Omega) = \sum_{-1 \leq i<j \leq 1} \sigma_{i,j} \per_\Omega^s(E_i,E_j).
\end{equation}

We prove that the renormalized energy $(1-s)\mathcal{F}^s(\E)$ converges to $\omega_{n-1}\F^1(\E)$ as $s \nearrow 1$ for a given admissible $\E$.  The normalization $(1-s)$ was established in~\cite{Brezis}, see also~\cite{ADPM,  CVcalcvar}. 

\begin{theorem}\label{thm:uniformintro} 
Assume that $\Omega \subset \R^n$ is a bounded domain with Lipschitz boundary. Let~$\E$ be admissible such that 
$\partial E_i \cap \Omega$ is Lipschitz for $-1 \leq i \leq 1$. 

Then, for any $\Omega' \Subset \Omega$, it holds that
\[
\lim_{s \nearrow 1} \bigg| (1-s)\F^s(\E,\Omega')-\omega_{n-1} \F^1(\E,\Omega')\bigg| \leq 2 \omega_{n-1}\F^1(\E, \partial \Omega'). 
 \]
\end{theorem}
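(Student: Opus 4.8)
The plan is to reduce everything to the known asymptotics of the fractional perimeter of a single set with Lipschitz boundary, applied to each pairwise term $\per^s_\Omega(E_i,E_j)$ separately. Concretely, rewrite $\F^s(\E,\Omega')$ using the $\alpha_i$ representation~\eqref{eq:alpha-energy}, so that $(1-s)\F^s(\E,\Omega') = \sum_{i}\alpha_i\,(1-s)\per^s_{\Omega'}(E_i)$, and similarly $\omega_{n-1}\F^1(\E,\Omega') = \sum_i \alpha_i\,\omega_{n-1}\per_{\Omega'}(E_i)$ (the passage between the $\sigma$ and $\alpha$ forms is Lemma~\ref{lem:JAB}, valid for any domain, so it commutes with taking $\Omega'$). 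Thus it suffices to prove, for each set $E$ with $\partial E\cap\Omega$ Lipschitz, that
\[
\limsup_{s\nearrow1}\Bigl|(1-s)\per^s_{\Omega'}(E) - \omega_{n-1}\per_{\Omega'}(E)\Bigr| \le 2\,\omega_{n-1}\per_{\partial\Omega'}(E),
\]
and then sum against the (signed) weights $\alpha_i$, absorbing the constants via $\sum_i|\alpha_i|$-type bounds into the final constant — actually, since the right-hand side of the theorem is phrased with $\F^1(\E,\partial\Omega')$ itself, one should carry the bound through in the $\sigma$-form and keep the factor $2$.

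The single-set estimate is where the real work sits, and it is essentially the Bourgain–Brezis–Mironescu / Dávila-type convergence made quantitative near the boundary of $\Omega'$. First I would split the double integral defining $(1-s)\per^s_{\Omega'}(E)$ according to whether the interactions are ``interior'' (both points, or the relevant point, well inside $\Omega'$) or ``near $\partial\Omega'$''. For the interior part, the standard computation shows $(1-s)\iint_{E\times E^c,\ x\in\Omega''}|x-y|^{-n-s}\,dx\,dy \to \omega_{n-1}\per_{\Omega''}(E)$ for $\Omega''$ slightly smaller than $\Omega'$; one uses the Lipschitz regularity of $\partial E$ to get locally a flat-interface model plus an error, and the key point that the $(1-s)$ normalization localizes the kernel so that only a shrinking neighborhood of $\partial E$ contributes in the limit. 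The contribution of interactions touching the collar $\Omega'\setminus\Omega''$ is controlled, after multiplying by $(1-s)$, by a constant times $\per_{\Omega'\setminus\Omega''}(E)$ up to $o(1)$; letting $\Omega''\uparrow\Omega'$ this collar term is bounded by $2\,\omega_{n-1}\per_{\partial\Omega'}(E)$ in the limit, which is exactly the slack allowed in the statement (the factor $2$ coming from the two ways an interaction pair can straddle the boundary region). Here the Lipschitz hypothesis on $\partial E\cap\Omega$ guarantees $\per_{\Omega'}(E)<\infty$ and lets us approximate locally by hyperplanes with controlled error; one also needs $\per_{\partial\Omega'}(E)=0$ to be handled gracefully or the bound interpreted as stated (it need not vanish if $\partial E$ meets $\partial\Omega'$ in positive $\mathcal{H}^{n-1}$-measure).

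The main obstacle I anticipate is the uniform-in-$s$ control of the error terms in the local flattening of $\partial E$: one must show that the discrepancy between the curved Lipschitz interface and its tangent hyperplanes produces an error that, after multiplication by $(1-s)$, tends to $0$ uniformly as $s\nearrow1$ — this is where the smallness of the Lipschitz constant is morally used, and where one must be careful that the kernel singularity $|x-y|^{-n-s}$ interacts with the Lipschitz graph in an integrable way with constants independent of $s$ close to $1$. A secondary technical point is bookkeeping the long-range (``tail'') interactions in $\per^s_{\Omega'}(E)$ coming from pairs $(x,y)$ with $x\in\Omega'$, $y$ far away: these are bounded by $(1-s)|\Omega'|\int_{|z|\ge r}|z|^{-n-s}\,dz = O(1-s)$, hence vanish in the limit and cause no trouble, but they must be explicitly peeled off before the flattening argument. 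Finally, since the weights $\alpha_i$ may be negative (this is the whole point of the paper), I would be careful to run the two-sided estimate $|\,\cdot\,|$ for each set first and only then combine, rather than trying to exploit any monotonicity.
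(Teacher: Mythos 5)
Your outline has a real gap in how the single-set asymptotic gets aggregated into the claimed bound. You propose to prove, for a single set~$E$,
\[
\limsup_{s\nearrow1}\big|(1-s)\per^s_{\Omega'}(E) - \omega_{n-1}\per_{\Omega'}(E)\big| \le 2\omega_{n-1}\per_{\partial\Omega'}(E),
\]
and then to sum, either in the $\alpha$-form or ``carried through in the $\sigma$-form.'' Neither route lands on the stated right-hand side $2\omega_{n-1}\F^1(\E,\partial\Omega')$. In the $\alpha$-form, the absolute value forces $|\alpha_i|$ in place of $\alpha_i$, giving at best $2\omega_{n-1}\sum_i|\alpha_i|\per_{\partial\Omega'}(E_i)$, and this exceeds $2\omega_{n-1}\F^1(\E,\partial\Omega') = 2\omega_{n-1}\sum_i\alpha_i\per_{\partial\Omega'}(E_i)$ by the amount $4|\alpha_0|\omega_{n-1}\per_{\partial\Omega'}(E_0)$ whenever $\alpha_0<0$ — and $\alpha_0<0$ is exactly the regime the paper is about, so this loss cannot be waved away. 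In the $\sigma$-form, converting three single-set estimates into a bound on $(1-s)\per^s_{\Omega'}(E_i,E_j)-\omega_{n-1}\per_{\Omega'}(E_i,E_j)$ via Lemma~\ref{lem:JAB} and the triangle inequality produces, after simplification, $2\omega_{n-1}\big[\per_{\partial\Omega'}(E_i,E_j)+\per_{\partial\Omega'}(E_i,E_k)+\per_{\partial\Omega'}(E_j,E_k)\big]$ rather than $2\omega_{n-1}\per_{\partial\Omega'}(E_i,E_j)$ — a genuinely weaker bound unless the two extra interfaces happen to miss $\partial\Omega'$. In short, the building block you need is not a single-set estimate but a \emph{pairwise} asymptotic for $\per^s_\Omega(A,B)$, and that pairwise estimate is not a formal corollary of the single-set one with the tight constant.

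The paper's Theorem~\ref{thm:CVAB} supplies exactly this pairwise asymptotic, and the way it gets it is by a domain-restriction trick your plan is missing: since $A$ and $B$ are disjoint, one has $A^c\cap(A\cup B)=B$, so with $\Omega'':=(A\cup B)\cap\Omega$ the local interaction $L^s(A\cap\Omega,B\cap\Omega)$ coincides with $L^s(A\cap\Omega'', A^c\cap\Omega'')$, i.e.\ a genuine single-set local interaction but in the \emph{shrunk} domain $\Omega''$. Applying the quantitative local convergence (\cite{CVcalcvar}*{Lemma 11}) with $\Omega''$ in place of $\Omega$ gives the pairwise estimate with the right constant, after which the cross-term bookkeeping is routine, and only then does one sum against the weights $\sigma_{i,j}>0$ (Corollary~\ref{cor:uniform}) — positivity of $\sigma_{i,j}$ (never of $\alpha_i$) is what the paper actually uses. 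One further side remark: the Lipschitz constant of $\partial E$ need not be small; \cite{CVcalcvar}*{Lemma 11} holds for arbitrary Lipschitz graphs, so the concern you raise about exploiting smallness of the Lipschitz norm is a red herring here.
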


\begin{remark}\label{rem:uniform}
If $\alpha_0\leq0$ and $\E$ is such that $\per_{\overline{\Omega}}(E_1,E_{-1}) = 0$, then by~\eqref{eq:f1fstar}, Theorem~\ref{thm:uniformintro} holds with $\F^*$ in place of $\F^1$. 

Also, when a ``transversality'' condition ensures that~$\F^1(\E, \partial \Omega')=0$,
Theorem~\ref{thm:uniformintro} returns that
$$\lim_{s \nearrow 1} (1-s)\F^s(\E,\Omega')=\omega_{n-1} \F^1(\E,\Omega').$$
\end{remark}

Since $\sigma_{i,j}>0$, Theorem~\ref{thm:uniformintro} follows from a similar statement for $\per_{\Omega}^s(A,B)$, see Theorem~\ref{thm:CVAB}. 
More precise estimates are written in Section~\ref{sec:uniformestimates}. 

We stress that in Theorem~\ref{thm:uniformintro} there is no additional assumption on $\sigma_{i,j}$ regarding the triangle inequality.
This is interesting, since the limit functional~$\F^{1}$ is not necessarily lower semicontinuous
and therefore the ``pointwise'' limit described in Theorem~\ref{thm:uniformintro} may
differ from the corresponding $\Gamma$-limit.

This discrepancy is addressed in the following result.
Indeed, if $\alpha_0\leq0$, then $\F^*$ is the largest lower semicontinuous functional that is less than or equal to $\F^1$, and we prove that $(1-s)\F^s$ $\Gamma$-converges to $\omega_{n-1}\F^*$ as $s \nearrow 1$, according to the forthcoming statement. 

\begin{theorem}\label{thm:s-gamma}
Let~$\Omega \subset \R^n$ be a bounded domain with Lipschitz boundary and assume that~$\alpha_0\leq0$. Then $(1-s)\F^s$ $\Gamma$-converges to $\omega_{n-1}\F^*$.

In particular, if $s_k \nearrow1$ as $k \to \infty$, then the following hold:
\begin{enumerate}
\item If the admissible $\E^k$, $\E$  are such that $\E^k \to \E$ in $L^1_{\text{loc}}(\R^n)$, then
\[
\liminf_{k \to \infty}(1-s_k) \F^{s_k}(\E^k) \geq \omega_{n-1}\F^*(\E). 
\]
\item If $\E$ is admissible, then there exists admissible $\E^k$ such that $\E^k \to \E$ in $L^1_{\text{loc}}(\R^n)$ and
\[
\limsup_{k \to \infty}(1-s_k) \F^{s_k}(\E^k) \leq \omega_{n-1}\F^*(\E). 
\]
\end{enumerate}
\end{theorem}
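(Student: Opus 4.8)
The plan is to prove the two halves of the $\Gamma$-convergence statement separately. Both rely on the following bookkeeping identity, obtained from the disjoint additivity of $\per_\Omega^s(A,\cdot)$ (Lemma~\ref{lem:JAB}) together with $E_{-1}^c=E_0\cup E_1$ and $E_1^c=E_0\cup E_{-1}$ (up to null sets): grouping each interfacial term according to which phase is adjacent, one obtains, for every admissible $\E$ and every $s\in(0,1]$,
\begin{equation}\label{eq:sgamma-split}
\F^s(\E)=\sigma_{-1,0}\,\per_\Omega^s(E_{-1})+\sigma_{0,1}\,\per_\Omega^s(E_1)-2\alpha_0\,\per_\Omega^s(E_1,E_{-1}),
\end{equation}
and, by the same grouping applied to the starred coefficients (for which $\sigma^*_{-1,1}=\sigma_{-1,0}+\sigma_{0,1}$, $\sigma^*_{-1,0}=\sigma_{-1,0}$, $\sigma^*_{0,1}=\sigma_{0,1}$, so that the counterpart of $\alpha_0$ vanishes), $\F^*(\E)=\sigma_{-1,0}\,\per_\Omega(E_{-1})+\sigma_{0,1}\,\per_\Omega(E_1)$.

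For part~(1), let $s_k\nearrow1$ and let admissible $\E^k\to\E$ in $L^1_{\mathrm{loc}}(\R^n)$. Because $\alpha_0\leq0$ and $\per_\Omega^s\geq0$, identity~\eqref{eq:sgamma-split} gives $\F^{s_k}(\E^k)\geq\sigma_{-1,0}\per_\Omega^{s_k}(E_{-1}^k)+\sigma_{0,1}\per_\Omega^{s_k}(E_1^k)$. Multiplying by $(1-s_k)$, passing to the $\liminf$, and using the $\Gamma$-$\liminf$ inequality for the fractional perimeter of a single set (i.e.\ $\liminf_k(1-s_k)\per_\Omega^{s_k}(F^k)\geq\omega_{n-1}\per_\Omega(F)$ whenever $\chi_{F^k}\to\chi_F$ in $L^1_{\mathrm{loc}}$, see~\cite{ADPM}) for $F^k=E_{-1}^k$ and for $F^k=E_1^k$, we get
\[
\liminf_{k\to\infty}(1-s_k)\F^{s_k}(\E^k)\geq\omega_{n-1}\bigl(\sigma_{-1,0}\per_\Omega(E_{-1})+\sigma_{0,1}\per_\Omega(E_1)\bigr)=\omega_{n-1}\F^*(\E).
\]
This is the only place where $\alpha_0\leq0$ is used.

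For part~(2), I would combine a density argument with a diagonal extraction. Let $\mathcal{D}$ be the family of admissible $\E$ such that $\partial E_{-1}$ and $\partial E_1$ are smooth, transverse to $\partial\Omega$, and $\dist(E_1,E_{-1})>0$. \emph{Density in energy:} since $\F^*$ depends only on $\per_\Omega(E_{-1})$ and $\per_\Omega(E_1)$, every admissible $\E$ with $\F^*(\E)<\infty$ is an $L^1_{\mathrm{loc}}$-limit of $\E^j\in\mathcal{D}$ with $\F^*(\E^j)\to\F^*(\E)$: mollify $\chi_{E_{-1}}$ and $\chi_{E_1}$ and pass to superlevel sets at regular values $t_j\searrow\tfrac12$; as $\chi_{E_{-1}}*\rho_j+\chi_{E_1}*\rho_j\leq1$, taking $t_j>\tfrac12$ forces disjoint closures, while the classical smooth approximation of finite-perimeter sets transverse to $\partial\Omega$ gives convergence of the relative perimeters (cf.~\cite{ADPM} and references therein). \emph{Recovery on $\mathcal{D}$:} for $\E\in\mathcal{D}$ the \emph{constant} sequence $\E^k\equiv\E$ works. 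In~\eqref{eq:sgamma-split}, $\per_\Omega^s(E_1,E_{-1})$ stays bounded as $s\nearrow1$ --- since $\dist(E_1,E_{-1})>0$ and at least one integration variable lies in the bounded set $\Omega$ --- so $(1-s)\per_\Omega^s(E_1,E_{-1})\to0$; and $(1-s)\per_\Omega^s(E_{-1})\to\omega_{n-1}\per_\Omega(E_{-1})$, $(1-s)\per_\Omega^s(E_1)\to\omega_{n-1}\per_\Omega(E_1)$ by the pointwise fractional-perimeter asymptotics for smooth sets transverse to $\partial\Omega$ (the case $B=A^c$ of Theorem~\ref{thm:CVAB}, see also Section~\ref{sec:uniformestimates}); hence $(1-s_k)\F^{s_k}(\E)\to\omega_{n-1}\F^*(\E)$. \emph{Diagonalization:} for general admissible $\E$ (the case $\F^*(\E)=+\infty$ being trivial), choose $\E^j\in\mathcal{D}$ as above; since $(1-s_k)\F^{s_k}(\E^j)\to\omega_{n-1}\F^*(\E^j)$ as $k\to\infty$, while $\omega_{n-1}\F^*(\E^j)\to\omega_{n-1}\F^*(\E)$ and $\E^j\to\E$ in $L^1_{\mathrm{loc}}$, Attouch's diagonal lemma yields $j(k)\to\infty$ with $\E^{j(k)}\to\E$ in $L^1_{\mathrm{loc}}$ and $\limsup_k(1-s_k)\F^{s_k}(\E^{j(k)})\leq\omega_{n-1}\F^*(\E)$, the desired recovery sequence.

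The $\liminf$ half is then a one-line algebraic inequality plus a known $\Gamma$-$\liminf$ for fractional perimeters, so the substance lies in part~(2). There the genuinely delicate input is the \emph{pointwise} convergence $(1-s)\per_\Omega^s(E_i)\to\omega_{n-1}\per_\Omega(E_i)$ \emph{up to $\partial\Omega$} for the approximating sets --- i.e.\ upgrading the interior statement of Theorem~\ref{thm:uniformintro} to the whole of $\Omega$ --- for which the transversality built into $\mathcal{D}$ is essential and for which I expect to rely on the sharp estimates of Section~\ref{sec:uniformestimates}; the density step, though classical, must simultaneously deliver smoothness, strict separation of the extreme phases, transversality, and convergence of the relative perimeters, so it too needs some care. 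Note that the $\Gamma$-limit being $\omega_{n-1}\F^*$ rather than $\omega_{n-1}\F^1$ is already visible in the two steps: in part~(1) one discards the nonnegative term $-2\alpha_0\per_\Omega^s(E_1,E_{-1})$, and in part~(2) the recovery sequence eliminates the direct $(-1)/1$ interface, so the effective surface tension between the extreme phases drops from $\sigma_{-1,1}$ to $\sigma_{-1,0}+\sigma_{0,1}$.
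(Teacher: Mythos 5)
Your proof follows essentially the same route as the paper. Part~(1) is nearly identical: both use the decomposition $\F^s(\E)=\sum_{i\in\{-1,1\}}\alpha_i^*\per_\Omega^s(E_i)-2\alpha_0\per_\Omega^s(E_{-1},E_1)$ with $\alpha_i^*=\sigma_{i,0}$ (the paper's~\eqref{eq:Fs*}, your~\eqref{eq:sgamma-split}), drop the nonnegative cross-term since $\alpha_0\leq0$, and invoke the ADPM $\Gamma$-$\liminf$ for the single-set fractional perimeters. For part~(2), both arguments build the recovery sequence by first approximating $\E$ within a density class on which the pointwise limit $(1-s)\per_\Omega^s\to\omega_{n-1}\per_\Omega$ holds (via the uniform estimates of Section~\ref{sec:uniformestimates}) and on which the cross-term $\per_\Omega^s(E_1,E_{-1})$ is killed by separation (Lemma~\ref{lem:separated}), and then diagonalizing. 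The only real difference is how the density class is produced: the paper first shrinks to $\widetilde E_i^\eps=\{x\in E_i:\dist(x,\partial E_i)\geq2\eps\}$ and then invokes Modica's Lemma~1, which at once delivers smoothness, $\mathcal H^{n-1}(\partial E_i^\eps\cap\partial\Omega)=0$, perimeter convergence, and the quantitative separation $\dist(E_i^\eps,\partial E_i)\geq\eps$; you instead mollify and take superlevel sets at $t_j>\tfrac12$, using $\chi_{E_{-1}}*\rho_j+\chi_{E_1}*\rho_j\leq1$ to force disjointness. That observation gives disjoint closures, but for Lemma~\ref{lem:separated} you need a uniform positive distance between $E_1^j\cap\Omega$ and $E_{-1}^j$ (and vice versa); since $\overline\Omega$ is compact this does follow from disjoint closed sets restricted to a compact neighborhood of $\overline\Omega$, but it is worth stating. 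The paper's explicit shrinking step sidesteps this. Everything else, including the use of the transversality built into the approximation to upgrade the convergence up to $\partial\Omega$ and the closing diagonalization, matches the paper's proof.
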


We recall that
Ambrosio--De Phillippis--Martinazzi~\cite{ADPM} proved the $\Gamma$-convergence of $(1-s) \per_{\Omega}^s(E)$ to $\omega_{n-1}\per_{\Omega}(E)$. 
For nonlocal clusters defined as in~\eqref{eq:alpha-energy}, Cesaroni--Novaga~\cite{Cesaroni-Novaga}*{Theorem 2.10} proved $\Gamma$-convergence under the assumption that $\alpha_i>0$ for all $i$. 
In contrast, our work allows for $\alpha_i \leq 0$ as well. 

\medskip   

We also prove a refinement of Theorem~\ref{thm:s-gamma} with given external data (see Propositions~\ref{prop:bdry-liminf} and~\ref{prop:bdry-limsup} and, in dimension one, see Theorem~\ref{thm:1D-gamma}). In turn, we prove that  minimizers of $\F^s$ converge to minimizers of $\F^*$. 

For $\delta>0$, we set
\begin{equation}\label{eq:delta-nbhd}
\begin{aligned}
 \Omega_\delta &=  \{x \in \R^n~\hbox{s.t.}~\dist(x, \partial \Omega)<\delta\},\\
 \Omega_\delta^+ &=  \{x \in \Omega^c~\hbox{s.t.}~\dist(x, \partial \Omega)<\delta\},\\
  \Omega_\delta^- &=  \{x \in \Omega~\hbox{s.t.}~\dist(x, \partial \Omega)<\delta\},
\end{aligned}
\end{equation}
where
$$ \dist(x, \partial \Omega):=\inf_{y\in\partial\Omega}|x-y|.$$

We say that a measurable set $E$ is transversal to $\partial \Omega$ if 
\[
\lim_{ \delta \searrow 0^+} \per_{\Omega_\delta}(E) = 0
\]
and that $E$ is transversal to $\partial \Omega^+$ if 
\[
\lim_{ \delta \searrow 0^+} \per_{\Omega_\delta^+}(E) = 0.
\]
We say that $\E$ is transversal to $\partial \Omega$ (respectively  to $\partial \Omega^+$) if each $E_i$ is transversal to $\partial \Omega$ (respectively to $\partial \Omega^+$). 

An admissible $\E$ is \emph{polyhedral in~$\Omega$} if, for each $E_i$, there is a finite number of $(n-1)$-dimensional simplexes $T_1, \dots T_{r_i} \subset \R^n$ such that $\partial E_i$ coincides with $\cup_j(T_j \cap \Omega)$ up to a set of $\mathcal{H}^{n-1}$ dimension zero.

\begin{theorem}\label{thm:localmin}
Let~$\Omega \subset \R^n$ be a bounded domain with $C^1$ boundary and assume that $\alpha_0\leq 0$. 
Consider an admissible~$\widetilde{\E}$ that is polyhedral in $\Omega^c$ and transversal to $\partial \Omega^+$. 

Let~$s_k \nearrow 1$ as $k \to \infty$.
Assume that $\E^k$ are local minimizers of $\F^{s_k}$ in~$\Omega$ such that $E_i^k \cap \Omega^c = \widetilde{E}_i \cap \Omega^c$  for each $-1 \leq i \leq 1$. 

Then there is an admissible~$\E$ such that
$E_i \cap \Omega^c = \widetilde{E}_i \cap \Omega^c$ for each $-1 \leq i \leq 1$,
$\E$ is a minimizer of $\F^*$, and, up to a subsequence, it holds that $\E^k \to \E$ in $L^1_{\text{loc}}(\R^n)$ as $k \to \infty$ and 
\begin{equation}\label{eq:recoverenergy}
\lim_{k \to \infty} (1-s_k) \F^{s_k}(\E^k) = \omega_{n-1} \F^*(\E).
\end{equation}
\end{theorem}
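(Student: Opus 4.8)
The plan is to combine the $\Gamma$-convergence of $(1-s)\F^s$ to $\omega_{n-1}\F^*$ from Theorem~\ref{thm:s-gamma} (in its boundary-data refinement, Propositions~\ref{prop:bdry-liminf} and~\ref{prop:bdry-limsup}) with a compactness argument for the sequence of local minimizers $\E^k$. First I would establish a uniform energy bound: using the given polyhedral competitor $\widetilde\E$ as a comparison configuration together with the transversality of $\widetilde\E$ to $\partial\Omega^+$ and the uniform estimates of Section~\ref{sec:uniformestimates}, one shows $\sup_k (1-s_k)\F^{s_k}(\E^k) \le M < \infty$; here the transversality is what prevents the nonlocal boundary interactions from blowing up as $s_k\nearrow1$. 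From a uniform bound on $(1-s_k)\F^{s_k}(\E^k)$ one extracts, via the fractional Sobolev compactness (in the spirit of the nonlocal perimeter compactness used in~\cite{ADPM}), a subsequence and an admissible limit $\E$ with $\E^k\to\E$ in $L^1_{\text{loc}}(\R^n)$; since $E_i^k\cap\Omega^c = \widetilde E_i\cap\Omega^c$ is fixed, the limit inherits $E_i\cap\Omega^c = \widetilde E_i\cap\Omega^c$.

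Next I would show $\E$ minimizes $\F^*$ among competitors sharing the same exterior data. Let $\E'$ be any admissible competitor with $E_i'\cap\Omega^c = \widetilde E_i\cap\Omega^c$. By the $\limsup$ (recovery sequence) part of the $\Gamma$-convergence with boundary data, there exist admissible $\E'^k$ with $E_i'^k\cap\Omega^c = \widetilde E_i\cap\Omega^c$, $\E'^k\to\E'$ in $L^1_{\text{loc}}$, and $\limsup_k (1-s_k)\F^{s_k}(\E'^k)\le \omega_{n-1}\F^*(\E')$. Since $\E^k$ is a local minimizer of $\F^{s_k}$ with exterior data $\widetilde E_i$, for each $\Omega'\Subset\Omega$ we have $\F^{s_k}(\E^k)\le\F^{s_k}$ of any competitor agreeing with $\E^k$ outside $\Omega'$; one then needs to pass from "local minimizer" to the global comparison with $\E'^k$ by a standard diagonalization — modifying $\E'^k$ in a thin collar $\Omega\setminus\Omega'$ to match $\E^k$ there, controlling the extra energy by $\F^1(\cdot,\Omega\setminus\Omega')$ which is small for $\Omega'$ close to $\Omega$ thanks to transversality of the limiting configurations. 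Combining with the $\liminf$ inequality $\liminf_k (1-s_k)\F^{s_k}(\E^k)\ge\omega_{n-1}\F^*(\E)$ yields
\[
\omega_{n-1}\F^*(\E)\le \liminf_k (1-s_k)\F^{s_k}(\E^k)\le\limsup_k (1-s_k)\F^{s_k}(\E^k)\le\omega_{n-1}\F^*(\E'),
\]
so $\F^*(\E)\le\F^*(\E')$ for all admissible $\E'$ with the prescribed exterior data, i.e.\ $\E$ is a minimizer of $\F^*$. Taking $\E' = \E$ in the chain forces all inequalities to be equalities, which gives~\eqref{eq:recoverenergy}.

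The main obstacle I anticipate is the passage from \emph{local} minimality of $\E^k$ to the \emph{global} comparison needed against the recovery sequence $\E'^k$: the recovery sequence need not agree with $\E^k$ on any fixed collar of $\partial\Omega$, so one must surgically glue $\E'^k$ to $\E^k$ near $\partial\Omega$ and estimate the cost of this gluing. The delicate point is that the gluing cost must be controlled \emph{uniformly in $k$} and must vanish as the collar shrinks; this is exactly where the hypothesis that $\widetilde\E$ is polyhedral in $\Omega^c$ and transversal to $\partial\Omega^+$, together with the fact that the $\F^*$-transversality of the relevant limiting sets propagates to the collar, is used — one shows the nonlocal collar energy is $\le (1-s_k)\,\text{(const)}\,\F^{s_k}$ of a modification, bounded via Theorem~\ref{thm:CVAB}-type estimates by $\omega_{n-1}\F^1$ of the collar plus lower-order terms. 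A secondary technical point is ensuring the extracted limit $\E$ is genuinely admissible (the three phases partition $\R^n$ up to null sets), which follows since $L^1_{\text{loc}}$ limits of partitions are partitions.
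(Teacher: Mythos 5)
Your plan follows the paper's proof at the structural level: (i) equiboundedness of $(1-s_k)\F^{s_k}(\E^k)$, (ii) compactness to extract $\E^k\to\E$ in $L^1_{\mathrm{loc}}$, (iii) $\Gamma$-$\liminf$/$\Gamma$-$\limsup$ with prescribed exterior data (Propositions~\ref{prop:bdry-liminf}, \ref{prop:bdry-limsup}) to show that $\E$ minimizes $\F^*$, and (iv) taking $\mathbf{F}=\E$ to obtain~\eqref{eq:recoverenergy}. So the overall route matches. Two remarks on where the details diverge from the paper.

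For the uniform energy bound, the paper's Lemma~\ref{lem:min-sequences} does \emph{not} compare against $\widetilde{\E}$ as you propose (which would require extending $\widetilde{\E}$ into $\Omega$ and, more seriously, would again run into the problem that $\widetilde{\E}$ agrees with $\E^k$ only on $\Omega^c$, outside the reach of local minimality). Instead it builds the competitor $F^k_i = E^k_i\cap(\Omega^c\cup\Omega_\delta)$ for $i\in\{-1,1\}$, $F^k_0 = \R^n\setminus(F^k_{-1}\cup F^k_1)$, which coincides with $\E^k$ outside $\Omega\setminus\Omega_\delta^-\Subset\Omega$, so local minimality and the localization identity in Lemma~\ref{lem:localmin} apply directly. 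Because $\alpha_0\le0$, dropping $\per^{s_k}(F^k_0)$ and using Lemma~\ref{lem:omegaomega} bounds the interior energy by $\sum_{i\in\{-1,1\}}\alpha_i\per^{s_k}_\Omega(F^k_i)$, which is controlled uniformly as in~\cite{ADPM}, Theorem~3. This is cleaner than your proposal because the competitor automatically has the right agreement on a collar.

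For the minimality of the limit, your caution about local-versus-global comparison is well placed: the recovery sequence $\mathbf{F}^k$ from Proposition~\ref{prop:bdry-limsup} is only guaranteed to agree with $\E^k$ on $\Omega^c$, so the inequality $\F^{s_k}(\E^k)\le\F^{s_k}(\mathbf{F}^k)$ does not follow from the definition of \emph{local} minimizer as stated; the paper invokes ``the minimality of $\E^k$'' at this step without making this explicit. Your proposed remedy (glue $\mathbf{F}^k$ to $\E^k$ in a thin collar and diagonalize) is the standard fix, but note that the gluing cost you want to estimate involves the behaviour of $\E^k$ itself near $\partial\Omega$, not only the polyhedral/transversal data $\widetilde{\E}$; controlling this uniformly in $k$ would require the uniform energy bound together with an argument (e.g.\ a Fubini/slicing choice of a good collar radius) rather than transversality of the limit alone. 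In short: your approach is the same as the paper's modulo the choice of competitor in step (i); your concern in step (iii) is legitimate and the paper does not address it head on; but the sketch of the gluing estimate would need to be filled in with more care than ``controlled by $\F^1$ of the collar.''
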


\subsection{Organization of the paper}

The rest of the paper is organized as follows. 
In Section~\ref{sec:phase}, we present an application of the energies $\F^s$ for which the triangle inequality is not satisfied. This is interesting, since it shows
that long-range area functionals violating the triangle inequality arises
naturally from nonlocal phase transition models.

Then,
we discuss preliminary properties of nonlocal area functionals and minimizers of $\F^s$ in Section~\ref{sec:prelim}. 
Theorem~\ref{thm:reduceintro} is made precise and proven in Section~\ref{sec:reduce}. 
We prove Theorems~\ref{thm:s-gamma} and~\ref{thm:localmin} in Section~\ref{sec:gamma}. 
Finally, Appendix~\ref{appendix:1D} contains the one-dimensional counterparts of our main results.

\subsection{Notation}\label{sec:notation}

For measurable sets $A,B \subset \R^n$, let
\[
L(A,B)=L^s(A,B):= \iint_{A \times B} \frac{dx dy}{|x-y|^{n+s}}, \quad s \in (0,1). 
\]
Consequently, if $|A \cap B|=0$, one can write
\[
\per^s_\Omega(A,B) := L(A \cap \Omega, B \cap \Omega) + L(A \cap \Omega, B \cap \Omega^c) + L(A\cap \Omega^c, B \cap \Omega).
\]

\section{Application to multi-phase transition problems}\label{sec:phase}

In this section, we present a concrete application to multi-phase transitions that gives rise to an energy~\eqref{eq:sigma-energy} for which the triangle inequality is \emph{not} satisfied. 

Consider a long-range multi-phase coexistence energy functional given by 
\begin{equation}\label{eq:Feps}
\mathcal{E}_\eps(u) := \frac12 \iint_{(\R^n \times \R^n) \setminus (\Omega^c \times \Omega^c)} \frac{|u(x) - u(y)|^2}{|x-y|^{n+s}} \, dx\,dy
+\frac{1}{\eps^{s}} \int_{\Omega} W(u(x)) \, dx,
\end{equation}
where~$\Omega \subset \R^n$
is a bounded domain, $W$ is a triple-well potential,~$\eps>0$ is a small parameter, and $s \in (0,1)$. 
More precisely, we assume that $W \in C^2(\R)$ satisfies
\begin{equation}\label{eq:W}
\begin{cases}
W(-1) = W(0) = W(1) = 0,\\
W'(-1) = W'(0) = W'(1) =0,\\
W(r)>0 \quad \hbox{for any}~r \in (-1,0) \cup (0,1), \\
W''(-1),~W''(0),~W''(1)>0.
\end{cases}
\end{equation}
The potential energy in~\eqref{eq:Feps}, which favors the pure phases indexed by $i \in \{-1,0,1\}$, is balanced by the long-range interaction energy that takes into account long-range particle interactions. 

In the nonlocal setting two-phase transition problems in which $W$ is a double-well potential have been considered in~\cites{SV-gamma, SV-dens, MR1612250} and more recently in~\cites{PAPERONE, PAPERTWO, MR4743481, 2024arXiv241101586S}. 
Exactly as in the proof of~\cite{SV-gamma}*{Theorem 1.2}, one can show that $\mathcal{E}_\eps$ $\Gamma$-converges to a nonlocal perimeter-type functional when $u \in \{-1,0,1\}$. 

\begin{theorem}
Let~$s \in (0,1)$ and $\Omega \subset \R^n$ be a bounded domain. Then $\mathcal{E}_\eps$ $\Gamma$-converges to $\mathcal{E}$ which is given by 
\begin{equation}\label{eq:gamma-limit-F}
\mathcal{E}(u) = \begin{cases}
\displaystyle \frac12 \iint_{(\R^n \times \R^n) \setminus (\Omega^c \times \Omega^c)} \frac{|u(x) - u(y)|^2}{|x-y|^{n+s}} \, dy \, dx
	 &\displaystyle \hbox{if}~u\big|_{\Omega \cap E_i} = i \in \{-1,0,1\},\\
+\infty & \hbox{otherwise}, 
\end{cases}
\end{equation}
where $\E = \{E_{-1}, E_0, E_{1}\}$ is admissible. 
\end{theorem}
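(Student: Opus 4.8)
The plan is to follow the now-standard $\Gamma$-convergence argument for nonlocal Modica--Mortola type functionals, as in~\cite{SV-gamma}*{Theorem 1.2}, adapted to the triple-well setting. The key point is that the nonlocal Dirichlet (Gagliardo) term is already $\Gamma$-lower semicontinuous with respect to $L^1_{\mathrm{loc}}$ convergence (it is convex and continuous in the obvious topology), so the burden of the argument falls entirely on the potential term, which must force any sequence with bounded energy to converge to a function taking values in $\{-1,0,1\}$ almost everywhere in $\Omega$.

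For the \textbf{liminf inequality}, I would take $u_\eps \to u$ in $L^1_{\mathrm{loc}}(\R^n)$ with $\liminf_\eps \mathcal{E}_\eps(u_\eps) =: M < +\infty$ (the case $M = +\infty$ being trivial). Since $\eps^{-s}\int_\Omega W(u_\eps)\,dx \le M + o(1)$, we get $\int_\Omega W(u_\eps)\,dx \to 0$, hence (passing to a subsequence realizing the liminf and a further one giving a.e.\ convergence) $W(u(x)) = 0$ a.e.\ in $\Omega$, so $u|_{\Omega \cap E_i} = i$ for an admissible partition $\E = \{E_{-1},E_0,E_1\}$; thus $u$ lies in the finiteness domain of $\mathcal{E}$. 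It then remains to show $\liminf_\eps \mathcal{E}_\eps(u_\eps) \ge \mathcal{E}(u)$, which follows because the potential term is nonnegative and the Gagliardo seminorm term is lower semicontinuous under $L^1_{\mathrm{loc}}$ (indeed $L^1$ on any bounded set, and the double integral over $(\R^n\times\R^n)\setminus(\Omega^c\times\Omega^c)$ is handled by Fatou's lemma after passing to a pointwise a.e.\ convergent subsequence of $u_\eps(x)-u_\eps(y)$).

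For the \textbf{limsup inequality (recovery sequence)}, given an admissible $\E$ and the associated $u = \sum_i i\,\chi_{E_i}$, if $\mathcal{E}(u) = +\infty$ there is nothing to prove; otherwise the Gagliardo seminorm of $u$ is finite, and the natural recovery sequence is simply $u_\eps \equiv u$. Indeed $u$ already takes values in $\{-1,0,1\}$, so $W(u) \equiv 0$ on $\Omega$ and the potential term vanishes identically, while the interaction term equals $\mathcal{E}(u)$ exactly for every $\eps$; hence $\limsup_\eps \mathcal{E}_\eps(u) = \mathcal{E}(u)$ and there is no need for any nontrivial interpolation profile across the interface. (This is the feature that distinguishes the nonlocal $s<1$ case from the classical Modica--Mortola situation: the $s$-Gagliardo energy of a characteristic-type function is already finite, so no $\eps$-dependent transition layer is required.)

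The main obstacle — really the only nontrivial point — is the passage to an a.e.\ convergent subsequence justifying the application of Fatou in the liminf step: one needs $u_\eps(x) - u_\eps(y) \to u(x) - u(y)$ for a.e.\ $(x,y)$, which follows from $L^1_{\mathrm{loc}}$ (hence, up to subsequence, a.e.) convergence of $u_\eps$ together with a diagonal/Fubini argument, exactly as in~\cite{SV-gamma}. One should also remark that the claimed $\Gamma$-convergence is with respect to $L^1_{\mathrm{loc}}(\R^n)$ convergence, and that the set of admissible $\E$ equipped with this topology is the relevant ambient space; equicoercivity of $\mathcal{E}_\eps$ (needed to convert $\Gamma$-convergence into convergence of minimizers) can be obtained from the finiteness of the external datum plus a uniform bound on the Gagliardo energy, but is not part of the statement to be proved here. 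I would therefore simply cite~\cite{SV-gamma}*{Theorem 1.2} for the two-phase template and indicate the (minor) modifications: replacing the double-well $W$ by the triple-well $W$ of~\eqref{eq:W}, and noting that the decomposition $\{W = 0\} = \{-1\}\cup\{0\}\cup\{1\}$ yields the admissible $3$-cluster $\E$ rather than a single set.
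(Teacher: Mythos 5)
Your proof is correct and takes the same route as the paper, which in fact gives no details at all: it simply states that the argument is ``exactly as in the proof of~\cite{SV-gamma}*{Theorem 1.2}.'' You have spelled out what that means here — the $\liminf$ via Fatou on the (nonnegative) Gagliardo kernel together with $\eps^{-s}\int_\Omega W(u_\eps)\,dx$ forcing $W(u)=0$ a.e.\ in $\Omega$ on an a.e.-convergent subsequence, and the constant recovery sequence $u_\eps\equiv u$ for the $\limsup$, which is legitimate precisely because the interaction term is not $\eps$-rescaled and is already finite for piecewise-constant $u$ when $s\in(0,1)$. The only cosmetic slip is the parenthetical that the Gagliardo term is ``convex and continuous in the obvious topology'': it is certainly not $L^1_{\mathrm{loc}}$-continuous, only lower semicontinuous, which is what your Fatou argument actually establishes, so the conclusion is unaffected. (Both you and the paper tacitly use $W\ge0$ on the whole real line — or equivalently work with $u$ valued in $[-1,1]$ — so that Fatou applies to the potential; this is standard in the Modica--Mortola literature and matches the intent of~\eqref{eq:W}.)
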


We can express the energy functional $\mathcal{E}$ in terms of nonlocal perimeter functionals. 

\begin{lemma}\label{lem:F-perimeter}
Let~$s \in (0,1)$ and $\Omega \subset \R^n$ be a bounded domain. 
Suppose that $\E$ is admissible and let $u :\R^n \to \{-1,0,1\}$ be such that $\mathcal{E}(u)<\infty$. If $E_i = \{u=i\}$ for $-1 \leq i \leq 1$, then
\begin{equation}\label{eq:gamma-limit}
\frac12 \iint_{(\R^n \times \R^n) \setminus (\Omega \times \Omega)} \frac{|u(x) - u(y)|^2}{|x-y|^{n+s}} \, dy \, dx
	= \per^s_\Omega(E_{-1}, E_0) 
		+  \per^s_\Omega(E_{0}, E_1)
		+ 4  \per^s_\Omega(E_{-1}, E_1).
\end{equation}
\end{lemma}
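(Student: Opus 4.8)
\textbf{Proof proposal for Lemma~\ref{lem:F-perimeter}.}

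The plan is to expand the Dirichlet-type double integral pointwise according to the value of the difference $|u(x)-u(y)|^2$ and match each resulting piece with one of the nonlocal area functionals $\per^s_\Omega(E_i,E_j)$. Since $u$ takes only the values $-1,0,1$, on the product set $E_i\times E_j$ the integrand $|u(x)-u(y)|^2$ is constant, equal to $(i-j)^2$; it vanishes when $i=j$, equals $1$ when $\{i,j\}=\{-1,0\}$ or $\{0,1\}$, and equals $4$ when $\{i,j\}=\{-1,1\}$. First I would use that $\{E_{-1},E_0,E_1\}$ is a partition of $\R^n$ up to null sets to decompose $\R^n\times\R^n = \bigcup_{i,j} E_i\times E_j$ (a disjoint union up to measure zero), so that
\[
\iint_{(\R^n\times\R^n)\setminus(\Omega^c\times\Omega^c)} \frac{|u(x)-u(y)|^2}{|x-y|^{n+s}}\,dx\,dy
= \sum_{-1\le i,j\le 1} (i-j)^2 \iint_{(E_i\times E_j)\setminus(\Omega^c\times\Omega^c)} \frac{dx\,dy}{|x-y|^{n+s}}.
\]
Here I should be slightly careful that the excluded region in the statement of the lemma is written as $\Omega\times\Omega$ rather than $\Omega^c\times\Omega^c$; I would check that this is a typo and that the intended exclusion (consistent with Definition~\ref{defn:ij-per} and~\eqref{eq:energyintro}) is $\Omega^c\times\Omega^c$, since otherwise the right-hand side would not match. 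Assuming the exclusion is $\Omega^c\times\Omega^c$, each term in the sum is exactly $(i-j)^2\,\per^s_\Omega(E_i,E_j)$ by Definition~\ref{defn:ij-per}, using that $\per^s_\Omega(E_i,E_j)=\per^s_\Omega(E_j,E_i)$ by symmetry of the kernel, so the off-diagonal pairs $(i,j)$ and $(j,i)$ combine.

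Collecting terms: the diagonal contributes $0$; the pairs with $\{i,j\}=\{-1,0\}$ contribute $1\cdot 2\,\per^s_\Omega(E_{-1},E_0)$ after combining the two orderings, but the factor of $\tfrac12$ in front of the double integral cancels this $2$, leaving $\per^s_\Omega(E_{-1},E_0)$; similarly $\{0,1\}$ gives $\per^s_\Omega(E_0,E_1)$; and $\{-1,1\}$ gives $4\,\per^s_\Omega(E_{-1},E_1)$. This is exactly~\eqref{eq:gamma-limit}. The only genuinely non-routine point is justifying the interchange of the sum with the integral and the partition decomposition: since all integrands are nonnegative, Tonelli's theorem applies and no finiteness hypothesis is needed for the splitting itself; the hypothesis $\mathcal{E}(u)<\infty$ merely guarantees that the individual terms $\per^s_\Omega(E_i,E_j)$ are finite, which is what makes the identity meaningful. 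I would also note that the sets $E_i$ are determined by $u$ only up to null sets, which does not affect any of the integrals. I do not expect a serious obstacle here; the main thing to get right is the bookkeeping of the symmetrization factor $2$ against the prefactor $\tfrac12$, and confirming the $\Omega^c\times\Omega^c$ versus $\Omega\times\Omega$ discrepancy in the displayed statement.
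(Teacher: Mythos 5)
Your proposal is correct and takes essentially the same approach as the paper's: a direct expansion of the integrand $|u(x)-u(y)|^2$ into the constant value $(i-j)^2$ on each product $E_i\times E_j$, followed by symmetrization. The paper first splits the domain into $\Omega\times\Omega$ and the two cross pieces and then decomposes by phase, whereas you decompose by phase directly against Definition~\ref{defn:ij-per}, which is tighter bookkeeping but not a conceptually different argument; you also correctly identify that the exclusion set in the displayed statement should read $\Omega^c\times\Omega^c$ rather than $\Omega\times\Omega$, consistent with \eqref{eq:gamma-limit-F}.
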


\begin{proof}
For measurable sets $A,B \subset \R^n$, we use the notation
\[
u(A,B) = \iint_{A \times B}\frac{|u(x) - u(y)|^2}{|x-y|^{n+s}} \, dy \, dx, 
\]
so that
\[
\frac12 \iint_{(\R^n \times \R^n) \setminus (\Omega^c \times \Omega^c)} \frac{|u(x) - u(y)|^2}{|x-y|^{n+s}} \, dy \, dx=\frac12 u(\Omega,\Omega) + u(\Omega, \Omega^c).
\]
Since $u = i$ in $E_i$, we have, for $i\not=j$, 
\begin{eqnarray*}&&
u(E_i \cap \Omega, E_i \cap \Omega) = 0 ,\\ &&
u(E_i \cap \Omega, E_j \cap \Omega) = (j-i)^2 L(E_i \cap \Omega, E_j \cap \Omega), \\ &&
u(E_i \cap \Omega, E_j \cap \Omega^c) = (j-i)^2 L(E_i \cap \Omega, E_j \cap \Omega^c). 
\end{eqnarray*}
Hence, it holds that
\begin{align*}
\frac12 u(\Omega,\Omega) 
	&= u(E_{-1} \cap \Omega,E_0 \cap \Omega) + u(E_{-1} \cap \Omega,E_1 \cap \Omega) 
		+u(E_0 \cap \Omega,E_1 \cap \Omega)
\\&= L(E_{-1} \cap \Omega,E_0 \cap \Omega)
	+ 4 L(E_{-1} \cap \Omega,E_1 \cap \Omega)
	+ L(E_{0} \cap \Omega,E_1 \cap \Omega),
\end{align*}
and
\begin{align*}
u(\Omega, \Omega^c)
	&=   u(E_{-1} \cap \Omega,E_0 \cap \Omega^c)
		+  u(E_{-1} \cap \Omega,E_1 \cap \Omega^c)\\
	&\qquad + u(E_{0} \cap \Omega,E_{-1} \cap \Omega^c)
		+  u(E_{0} \cap \Omega,E_{1} \cap \Omega^c)\\
	&\qquad + u(E_{1} \cap \Omega,E_{0} \cap \Omega^c)
		+  u(E_{1} \cap \Omega,E_{-1} \cap \Omega^c)\\
	&=   L(E_{-1} \cap \Omega,E_0 \cap \Omega^c)
		+  4L(E_{-1} \cap \Omega,E_1 \cap \Omega^c)\\
	&\qquad + L(E_{0} \cap \Omega,E_{-1} \cap \Omega^c)
		+  L(E_{0} \cap \Omega,E_{1} \cap \Omega^c)\\
	&\qquad + L(E_{1} \cap \Omega,E_{0} \cap \Omega^c)
		+  4L(E_{1} \cap \Omega,E_{-1} \cap \Omega^c). 
\end{align*}
The result follows. 
\end{proof}

We stress that
the coefficients $\sigma_{i,j} = (j-i)^2$, $-1 \leq i < j \leq 1$ on the right-hand side of~\eqref{eq:gamma-limit} do not satisfy the triangle inequality, since
\begin{align*}
\alpha_{-1} = \alpha_1 = 2 >0 \quad \hbox{and} \quad \alpha_0 = -1<0. 
\end{align*}

This example is interesting, since it shows that clusters violating the triangle inequality arise
naturally in long-range interaction models.

More generally, if the potential $W$ has three wells at some $i<j<k$, then the limiting functional is
\[
(j-i)^2 \per^s_\Omega(E_i, E_j)
	+ (k-j)^2 \per^s_\Omega(E_j, E_k)
	+ (k - i)^2 \per^s_\Omega(E_i, E_k),
\]
and the triangle inequality for the coefficients does not hold since
\begin{align*}
\alpha_j = \frac12 \left[
(j-i)^2+(k-j)^2 - (k-i)^2 \right]
= -(j-i)(k-j)<0.
\end{align*}

\section{Preliminaries}\label{sec:prelim}

In this section, we establish some preliminary results and identities regarding  the nonlocal functionals $\per_\Omega^s(A)$ and $\per_\Omega^s(A,B)$ and regarding minimizers of the energy $\F^s$ for $s \in (0,1)$.

\subsection{Identities for nonlocal area functionals}

We first summarize some helpful identities for $\per_\Omega^s(A,B)$ and $\per_\Omega^s(E)$. 
It is clear from the definition that
\[
\per_{\Omega}^s(A,B) = \per_{\Omega}^s(B,A) \geq 0. 
\]
We establish a relationship between the two area functionals.

\begin{lemma}\label{lem:JAB}
If $A, B \subset \R^n$ are such that $|A \cap B| = 0$, then 
\[
\per_{\Omega}^s(A) = \per_\Omega^s(A,B) + \per_\Omega^s (A, \R^n \setminus (A \cup B))
\]
and
\[
\per^s_\Omega(A,B) = \frac{\per^s_\Omega(A) + \per^s_\Omega(B) - \per^s_\Omega(\R^n \setminus (A\cup B))}{2}.
\]
\end{lemma}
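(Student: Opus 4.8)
The plan is to work directly from the definitions in terms of the bilinear form $L$, partitioning $\R^n$ into the three disjoint pieces $A$, $B$, and $C := \R^n \setminus (A \cup B)$ (disjoint up to null sets, which suffices since $L$ is absolutely continuous). First I would record the decomposition
\[
\per^s_\Omega(X) = \per^s_\Omega(X, X^c)
\]
and, using that $L$ is additive in each argument over disjoint unions, split $X^c$ according to the partition. For $X = A$ we have $A^c = B \cup C$ up to a null set, so
\[
\per^s_\Omega(A) = \per^s_\Omega(A, B \cup C) = \per^s_\Omega(A,B) + \per^s_\Omega(A,C),
\]
where the last equality uses that $\per^s_\Omega(X,\cdot)$ is additive over disjoint sets (each of the three $L$-integrals in the Notation formula splits). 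This is exactly the first identity. The only point to check carefully is that $\per^s_\Omega(A, B\cup C)$ decomposes correctly even for the mixed $\Omega$/$\Omega^c$ terms; this is immediate because the domain of integration $(A \times (B\cup C)) \setminus (\Omega^c \times \Omega^c)$ splits as the disjoint union of $(A \times B)\setminus(\Omega^c\times\Omega^c)$ and $(A \times C)\setminus(\Omega^c\times\Omega^c)$.

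For the second identity I would apply the first identity three times, once for each of $A$, $B$, $C$ playing the role of $X$. Writing $p_{XY} := \per^s_\Omega(X,Y)$ for $X,Y \in \{A,B,C\}$ (symmetric in its arguments), the first identity gives
\[
\per^s_\Omega(A) = p_{AB} + p_{AC}, \qquad \per^s_\Omega(B) = p_{AB} + p_{BC}, \qquad \per^s_\Omega(C) = p_{AC} + p_{BC},
\]
the last one because $\R^n \setminus (B \cup C) = A$ up to null sets. Adding the first two and subtracting the third yields $2 p_{AB} = \per^s_\Omega(A) + \per^s_\Omega(B) - \per^s_\Omega(C)$, which is the claimed formula after dividing by $2$.

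I do not anticipate a genuine obstacle here; the argument is pure bookkeeping with the bilinear form $L$. The one thing worth stating explicitly (and the closest thing to a subtlety) is the additivity of $\per^s_\Omega(X,\cdot)$ over disjoint unions of the second argument, together with its symmetry, both of which follow at once from the representation $\per^s_\Omega(X,Y) = L(X\cap\Omega, Y\cap\Omega) + L(X\cap\Omega, Y\cap\Omega^c) + L(X\cap\Omega^c, Y\cap\Omega)$ and the additivity of $L$ in each slot over disjoint sets. I would also remark that all set identities are meant modulo Lebesgue-null sets, which is harmless since $L$ (and hence $\per^s_\Omega$) is unchanged by modifying its arguments on null sets.
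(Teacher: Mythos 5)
Your proof is correct and follows essentially the same route as the paper: split $A^c$ into $B$ and $C := \R^n\setminus(A\cup B)$ and use additivity of $L$ in each slot to obtain the first identity, then combine three instances of it to get the second. The paper simply states that the second identity ``follows from the first,'' whereas you spell out the symmetric system $\per^s_\Omega(A)=p_{AB}+p_{AC}$, etc., and solve for $p_{AB}$, which is the intended (and only) calculation.
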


\begin{proof}
For ease in the proof, let $E:= \R^n \setminus (A\cup B)$ so that $E^c = A \cup B$. We have that
\begin{align*}
\per^s_\Omega(A) 
&= L(A \cap \Omega, (B\cup E) \cap \Omega) + L(A \cap \Omega, (B \cup E) \cap \Omega^c) + L(A \cap \Omega^c, (B \cup E) \cap \Omega)\\
&=  L(A \cap \Omega, B \cap \Omega) + L(A \cap \Omega, B \cap \Omega^c) + L(A \cap \Omega^c, B \cap \Omega)\\
&\qquad + L(A \cap \Omega, E \cap \Omega) + L(A \cap \Omega, E \cap \Omega^c) + L(A \cap \Omega^c, E \cap \Omega)\\
&= \per^s_\Omega(A,B) + \per_{\Omega}^s(A,E),
\end{align*} 
which is the first identity. The second identity follows from the first. 
\end{proof}

\begin{remark}\label{rem:JAB1}
Lemma~\ref{lem:JAB} also holds in the classical setting, namely when $s=1$. 
\end{remark}

The next two lemmata can be checked by direct computation. We state them for later reference.  

\begin{lemma}\label{lem:E-difference}
Let~$A \subset \Omega$ and $E \subset \R^n$ be measurable sets. 
\begin{enumerate}
\item If $|E \cap A|>0$, then
\begin{align*}
\per_\Omega^s(E) -\per_\Omega^s(E \cap A^c)
	&=  L(E \cap A, E^c) - L(E \cap A, E \cap A^c).
\end{align*}
\item If $|E \cap A| =0$, then
\[
\per_\Omega^s(E \cup A)-\per_\Omega^s(E)
	= L( A, E^c \cap A^c)-L(E,  A).
\]
\end{enumerate}
\end{lemma}

\begin{lemma}\label{lem:omegaomega}
Let~$A,B \subset \R^n$ be such that $|A \cap B|=0$ and let $\Omega' \subset \Omega$. It holds that
\[
\per_\Omega^{s}(A) - \per_{\Omega'}^{s}(A)
	= L(A \cap (\Omega\setminus \Omega'), A^c \cap (\Omega')^c)+ L(A \cap \Omega^c, A^c \cap(\Omega\setminus \Omega'))
\]
and 
\[
\per_\Omega^{s}(A,B) - \per_{\Omega'}^{s}(A,B)
	= L(A \cap (\Omega \setminus \Omega'), B \cap (\Omega')^c)
		+ L(A \cap \Omega^c, B \cap (\Omega \setminus \Omega')). 
\]
\end{lemma}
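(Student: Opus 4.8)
The claim is that the difference $\per_\Omega^{s}(A)-\per_{\Omega'}^{s}(A)$ is accounted for exactly by those kernel-integral pieces that "straddle" the gap $\Omega\setminus\Omega'$. Let me think about how to prove this directly from the Notation section's decomposition.

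Recall $\per^s_\Omega(A,B) = L(A\cap\Omega, B\cap\Omega) + L(A\cap\Omega, B\cap\Omega^c) + L(A\cap\Omega^c, B\cap\Omega)$, and for a single set $\per^s_\Omega(A) = \per^s_\Omega(A, A^c)$. So this is really a statement about $\per^s$ of a pair $(A,B)$ with $B$ possibly $A^c$; the first identity is the special case $B=A^c$, since $A^c\cap(\Omega')^c = A^c\cap(\Omega')^c$ unchanged.

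**Approach.** Let me do the pair version $\per_\Omega^s(A,B) - \per_{\Omega'}^s(A,B)$ directly; the single-set version follows by taking $B=A^c$. Write both perimeters using the three-term decomposition. The idea: $\per^s_\Omega(A,B)$ counts (with the kernel) all pairs $(x,y)\in A\times B$ except those with both $x,y\in\Omega^c$. So $\per^s_\Omega(A,B) = L(A,B) - L(A\cap\Omega^c, B\cap\Omega^c)$. This compact identity is the key simplification. Then
\[
\per_\Omega^s(A,B) - \per_{\Omega'}^s(A,B) = L(A\cap(\Omega')^c, B\cap(\Omega')^c) - L(A\cap\Omega^c, B\cap\Omega^c),
\]
since the $L(A,B)$ terms cancel. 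Now $(\Omega')^c = \Omega^c \sqcup (\Omega\setminus\Omega')$ (disjoint since $\Omega'\subset\Omega$), so I split each factor of the first $L$ into these two pieces. This gives four terms; one of them is exactly $L(A\cap\Omega^c, B\cap\Omega^c)$, which cancels the subtracted term. The remaining three are
\[
L(A\cap(\Omega\setminus\Omega'), B\cap\Omega^c) + L(A\cap\Omega^c, B\cap(\Omega\setminus\Omega')) + L(A\cap(\Omega\setminus\Omega'), B\cap(\Omega\setminus\Omega')).
\]
This isn't yet in the stated form. But notice $(\Omega')^c \supset \Omega\setminus\Omega'$, so $L(A\cap(\Omega\setminus\Omega'), B\cap(\Omega')^c)$ already absorbs both the first and third of these terms (since $\Omega^c \sqcup (\Omega\setminus\Omega') = (\Omega')^c$). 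Hence the remaining three terms collapse to exactly $L(A\cap(\Omega\setminus\Omega'), B\cap(\Omega')^c) + L(A\cap\Omega^c, B\cap(\Omega\setminus\Omega'))$, which is the claimed right-hand side. For the single-set statement, set $B=A^c$: the first term becomes $L(A\cap(\Omega\setminus\Omega'), A^c\cap(\Omega')^c)$ and the second $L(A\cap\Omega^c, A^c\cap(\Omega\setminus\Omega'))$, matching the statement.

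**Main obstacle.** There is no real obstacle — this is pure bookkeeping with disjoint set decompositions and the bilinearity/additivity of $L(\cdot,\cdot)$ over disjoint unions in each argument. The only thing to be careful about is keeping the disjointness $\Omega^c \cap (\Omega\setminus\Omega') = \emptyset$ explicit (valid because $\Omega'\subset\Omega$) so that the $L$-splittings are genuine decompositions, and making sure the "$L(A,B) - L(\text{both in complement})$" reformulation of $\per^s_\Omega$ is justified — this follows immediately from $\Omega\times\Omega$, $\Omega\times\Omega^c$, $\Omega^c\times\Omega$, $\Omega^c\times\Omega^c$ partitioning $\R^n\times\R^n$. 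I would state that compact reformulation as a one-line preliminary observation and then the rest is a short computation, which is why the lemma is billed as "checked by direct computation."
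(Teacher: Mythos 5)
The paper gives no proof of Lemma~\ref{lem:omegaomega}: it states ``the next two lemmata can be checked by direct computation,'' so there is no paper argument to compare against. Your bookkeeping is correct and does exactly what the paper recommends. One small caveat worth flagging: the intermediate reformulation $\per^s_\Omega(A,B)=L(A,B)-L(A\cap\Omega^c,B\cap\Omega^c)$ is only legitimate when $L(A,B)<\infty$, and similarly the cancellation of $L(A\cap\Omega^c,B\cap\Omega^c)$ in the subsequent step needs that term to be finite. In general $L(A,B)$ and $L(A\cap\Omega^c,B\cap\Omega^c)$ can both be $+\infty$ (for instance if $A$ and $B$ share a substantial interface far outside $\Omega$) while $\per^s_\Omega(A,B)$ and $\per^s_{\Omega'}(A,B)$ remain finite, in which case your two subtractions are formally $\infty-\infty$. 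The fix is easy and does not change the spirit of the argument: skip the $L(A,B)$ detour and split each of the three integrals in the definition of $\per^s_\Omega(A,B)$ using the disjoint decompositions $\Omega=\Omega'\sqcup(\Omega\setminus\Omega')$ and $(\Omega')^c=\Omega^c\sqcup(\Omega\setminus\Omega')$; after cancelling against the three integrals of $\per^s_{\Omega'}(A,B)$ term by term, every cancellation involves only finite pieces (each bounded by $\per^s_\Omega(A,B)$), and the surviving terms collapse to $L(A\cap(\Omega\setminus\Omega'),B\cap(\Omega')^c)+L(A\cap\Omega^c,B\cap(\Omega\setminus\Omega'))$ exactly as you found. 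With that adjustment the proof is fully rigorous; the single-set version by taking $B=A^c$ is fine as you wrote it.
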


\subsection{Minimizers of $\F^s$}

Now we establish some properties of minimizers of $\F^s$. 
We begin by proving Proposition~\ref{prop:lsc}. 

\begin{proof}[Proof of Proposition~\ref{prop:lsc}]
Suppose that $\chi_{A^{k}} \to \chi_A$ and $\chi_{B^k} \to \chi_B$ in $L_{\text{loc}}^1(\R^n)$. It was observed in the proof of~\cite{CRS}*{Proposition 3.1} that there is a subsequence, indexed by $k_\ell$, such that
\[
\liminf_{\ell \to \infty} L(A^{k_\ell}, B^{k_\ell}) \geq L(A,B).
\]
Recalling Definition~\ref{defn:ij-per}, we have
\[
\liminf_{\ell \to \infty} \per_\Omega^s (E_{i}^{k_\ell}, E_j^{k_\ell}) \geq \per_\Omega^s (E_{i}, E_j) \quad \hbox{for}~i\not=j
\]
and the result follows from~\eqref{eq:sigma-energy} since $\sigma_{i,j}>0$. 
\end{proof}

Following the ideas in~\cite{CRS}, we obtain the following existence and compactness results. 

\begin{theorem}[Existence]
Given an admissible~$\mathbf{F}$,
there exist an admissible~$\E$  such that $E_i \cap \Omega^c = F_i \cap \Omega^c$ for $-1 \leq i \leq 1$ and
\[
\F^s(\E) = \inf_{\widetilde{E}_i \cap \Omega^c = F_i \cap \Omega^c}  \F^s(\widetilde{\E}).
\]
\end{theorem}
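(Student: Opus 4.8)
The plan is to use the direct method of the calculus of variations, following the standard strategy from \cite{CRS}. Fix an admissible competitor $\mathbf{F}$ and set
\[
m := \inf\Big\{ \F^s(\widetilde{\E}) : \widetilde{\E} \text{ admissible}, \ \widetilde{E}_i \cap \Omega^c = F_i \cap \Omega^c \text{ for } -1\leq i \leq 1\Big\}.
\]
First I would note that $m < \infty$, since $\mathbf{F}$ itself is an admissible competitor and $\F^s(\mathbf{F}) < \infty$; the finiteness of $\F^s(\mathbf{F})$ follows because $\Omega$ is bounded with Lipschitz boundary, so each $\per^s_\Omega(F_i, F_j)$ is controlled by $\per^s_\Omega(\text{bounded Lipschitz set})$, which is finite for $s \in (0,1)$. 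Then take a minimizing sequence $\E^k$ of admissible clusters with the prescribed exterior data and $\F^s(\E^k) \to m$.

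The key step is compactness: I would argue that the characteristic functions $\chi_{E_i^k}$ are precompact in $L^1_{\mathrm{loc}}(\R^n)$. Outside $\Omega$ they are all equal to the fixed functions $\chi_{F_i \cap \Omega^c}$, so only the behavior inside $\overline{\Omega}$ matters. Since $\F^s(\E^k)$ is bounded and $\sigma_{i,j} > 0$, each $\per^s_\Omega(E_i^k, E_j^k)$ is bounded; combining the three via Lemma~\ref{lem:JAB} (or directly, since $\per^s_\Omega(E_i^k) = \per^s_\Omega(E_i^k, E_j^k) + \per^s_\Omega(E_i^k, E_k^k)$ for the remaining index) gives a uniform bound on $\per^s_\Omega(E_i^k)$ for each $i$. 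A uniform bound on the nonlocal $s$-perimeter, together with the fact that the sets agree with fixed data outside the bounded set $\Omega$, yields precompactness in $L^1_{\mathrm{loc}}$ by the fractional Sobolev compact embedding (this is exactly the compactness argument in \cite{CRS}*{Section 3}, e.g.\ via a uniform $W^{s,1}$ bound on $\chi_{E_i^k}$ restricted to a large ball). Passing to a subsequence, $\chi_{E_i^k} \to \chi_{E_i}$ in $L^1_{\mathrm{loc}}(\R^n)$ for each $i$, and in particular the limits still satisfy $|E_i \cap E_j| = 0$ and cover $\R^n$ up to measure zero, so $\E := \{E_{-1}, E_0, E_1\}$ is admissible; moreover $E_i \cap \Omega^c = F_i \cap \Omega^c$ since convergence is pointwise a.e.\ along a further subsequence and the exterior data is fixed.

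Finally, I would apply lower semicontinuity, Proposition~\ref{prop:lsc}: since $\E^k \to \E$ in $L^1_{\mathrm{loc}}(\R^n)$,
\[
\F^s(\E) \leq \liminf_{k \to \infty} \F^s(\E^k) = m.
\]
Since $\E$ is an admissible competitor with the correct exterior data, also $\F^s(\E) \geq m$, hence $\F^s(\E) = m$, which is the claim. The main obstacle is the compactness step: one must make sure that the $L^1_{\mathrm{loc}}$ precompactness genuinely follows from the energy bound. This is where the prescribed exterior data is essential — without it, mass could escape to infinity — and where one quotes the fractional compact embedding; since it is carried out in \cite{CRS} for the two-phase case and our energy bounds each pairwise nonlocal area (hence each single $s$-perimeter) uniformly, the argument carries over verbatim. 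A minor point to check carefully is that only finitely many of the three perimeter bounds are ``independent,'' but $\alpha_i + \alpha_j = \sigma_{i,j} > 0$ and the representation \eqref{eq:alpha-energy} are not even needed here: boundedness of all three $\sigma_{i,j}\per^s_\Omega(E_i^k,E_j^k)$ directly bounds each $\per^s_\Omega(E_i^k,E_j^k)$, and summing the two involving a fixed index bounds $\per^s_\Omega(E_i^k)$.
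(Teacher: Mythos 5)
Your proposal is correct and follows essentially the same strategy the paper uses: the paper's proof is a one-liner citing the compactness argument from \cite{CRS}*{Theorem 3.2} together with Proposition~\ref{prop:lsc}, which is exactly the direct method you carry out. Your observation that $\per^s_\Omega(E_i^k)$ is bounded because each $\sigma_{i,j}\per^s_\Omega(E_i^k,E_j^k)$ is separately bounded and two of them sum (via Lemma~\ref{lem:JAB}) to $\per^s_\Omega(E_i^k)$ is the right way to reduce to the single-set compactness of \cite{CRS}, and you correctly note that no triangle-inequality assumption on the $\sigma_{i,j}$ is needed.
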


\begin{proof}
The proof follows like that of~\cite{CRS}*{Theorem 3.2} using compactness and  Proposition~\ref{prop:lsc}. 
\end{proof}

\begin{theorem}[Class of local minimizers is compact]
Suppose that $\E^k$ are minimizers for $\F^s$ in $B_1$ and
$\E^k \to \E$ in $L^1_{\text{loc}}(\R^n)$ as $k \to \infty$. 
Then $\E$ is a minimizer for $\F^s$ in $B_1$ and 
\[
\lim_{k \to \infty} \F^s(\E^k,B_1) = \F^s(\E,B_1). 
\]
\end{theorem}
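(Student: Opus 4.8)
The plan is to follow the standard blueprint for showing that a class of minimizers is closed under $L^1_{\text{loc}}$-convergence, which has two ingredients: lower semicontinuity (already available from Proposition~\ref{prop:lsc}) and a matching upper bound obtained by a ``recovery competitor'' argument. Concretely, suppose $\E^k \to \E$ in $L^1_{\text{loc}}(\R^n)$ with each $\E^k$ a minimizer for $\F^s$ in $B_1$. To see that $\E$ is a minimizer in $B_1$, fix any admissible competitor $\widetilde{\E}$ with $\widetilde E_i \cap B_1^c = E_i \cap B_1^c$ for each $i$. I would build from $\widetilde{\E}$ a sequence of admissible competitors $\widetilde{\E}^k$ for the problems on $B_1$ associated to the data of $\E^k$ outside $B_1$, by setting $\widetilde E_i^k := (\widetilde E_i \cap B_1) \cup (E_i^k \cap B_1^c)$; these are admissible (the modification happens only inside $B_1$, and $\widetilde{\E}$ partitions $B_1$) and they agree with $\E^k$ outside $B_1$, hence are valid competitors. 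By minimality of $\E^k$ we get $\F^s(\E^k, B_1) \le \F^s(\widetilde{\E}^k, B_1)$. The point is then to pass to the limit: $\liminf_k \F^s(\E^k,B_1) \ge \F^s(\E,B_1)$ by Proposition~\ref{prop:lsc}, while $\limsup_k \F^s(\widetilde{\E}^k, B_1) \le \F^s(\widetilde{\E},B_1)$ because $\widetilde{\E}^k \to \widetilde{\E}$ in $L^1_{\text{loc}}$ (which holds since $E_i^k \cap B_1^c \to E_i \cap B_1^c = \widetilde E_i \cap B_1^c$ in $L^1_{\text{loc}}$) \emph{together with} a uniform bound ensuring no energy escapes to infinity. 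Chaining these gives $\F^s(\E,B_1) \le \F^s(\widetilde{\E},B_1)$, so $\E$ is a minimizer.

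For the energy convergence $\lim_k \F^s(\E^k,B_1) = \F^s(\E,B_1)$, the $\liminf$ inequality is again Proposition~\ref{prop:lsc}, and for the $\limsup$ I would use $\E$ itself (restricted appropriately) as a competitor: set $\widehat{E}_i^k := (E_i \cap B_1) \cup (E_i^k \cap B_1^c)$, which is admissible and agrees with $\E^k$ outside $B_1$, so $\F^s(\E^k,B_1) \le \F^s(\widehat{\E}^k,B_1)$; then $\widehat{\E}^k \to \E$ in $L^1_{\text{loc}}$, and with the continuity-from-above estimate (controlling the long-range tails) one obtains $\limsup_k \F^s(\E^k,B_1) \le \F^s(\E, B_1)$. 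Combining with the $\liminf$ gives equality.

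The genuine obstacle is the control of the nonlocal tails when passing to the limit in the competitor energies, i.e.\ establishing the $\limsup$-type (continuity from above) statement for $\F^s(\cdot,B_1)$ along sequences that converge in $L^1_{\text{loc}}$ but are only \emph{modified} inside $B_1$. Because the kernel $|x-y|^{-n-s}$ is integrable at infinity for the relevant geometric configurations (interactions involving $B_1$, which is bounded), the contribution $L(A^k \cap B_1, B^k)$ etc.\ splits into a ``near'' part on a large ball $B_R$, which converges by the $L^1$-convergence and the argument of \cite{CRS}*{Proposition~3.1} applied in reverse (using that $\chi_{A^k}\chi_{B^k} \to \chi_A\chi_B$ in $L^1(B_R \times B_R)$ when $|A\cap B|=0$), and a ``far'' tail $\iint_{(B_1 \cap \cdot)\times(B_R^c)} |x-y|^{-n-s}\,dx\,dy \le C R^{-s} \to 0$ uniformly in $k$. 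Making this splitting precise — choosing $R$ first to kill the tail uniformly, then $k$ large to handle the near part — is the main technical content; everything else is the routine competitor bookkeeping above, and indeed the authors simply invoke \cite{CRS}*{Theorem~3.2} and Proposition~\ref{prop:lsc}, so I expect the written proof to be just one or two lines citing these.
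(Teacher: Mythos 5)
Your overall strategy---glue a fixed interior competitor onto the varying exterior data of $\E^k$, invoke minimality of $\E^k$, use Proposition~\ref{prop:lsc} for the lower bound, and pass to the limit in the competitor energy for the upper bound---is exactly what the paper does by citing \cite{CRS}*{Theorem 3.3}, and the bookkeeping (admissibility of $\widetilde{\E}^k$, the choice $\widetilde{\E}=\E$ for the energy convergence) is in order. However, the way you justify the critical step, namely $\F^s(\widetilde{\E}^k,B_1)\to\F^s(\widetilde{\E},B_1)$, does not hold. You split into a near part on $B_R$ and a far tail; for the near part you argue from $L^1(B_R\times B_R)$-convergence of $\chi_{A^k}\chi_{B^k}$ together with ``\cite{CRS}*{Proposition 3.1} applied in reverse.'' Neither ingredient does the job: Proposition 3.1 of \cite{CRS} rests on Fatou's lemma, which is intrinsically one-sided and has no ``reverse''; and $L^1$ convergence of indicators does \emph{not} yield convergence of $\iint \chi_{A^k}\chi_{B^k}\,|x-y|^{-n-s}$ on $B_R\times B_R$, because the kernel is unbounded near the diagonal---precisely in the region near $\partial B_1$ where the fixed piece $\widetilde E_i\cap B_1$ and the varying piece $E_j^k\cap B_1^c$ can get arbitrarily close.

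The ingredient you are missing, which your own construction makes available, is dominated convergence using the separation across $\partial B_1$. Since $\widetilde E_i^k\cap B_1=\widetilde E_i\cap B_1$ is fixed, the only $k$-dependent terms of $\F^s(\widetilde{\E}^k,B_1)=\sum\sigma_{i,j}\per^s_{B_1}(\widetilde E_i^k,\widetilde E_j^k)$ are cross terms of the form $L(\widetilde E_i\cap B_1,\,E_j^k\cap B_1^c)$ and $L(E_i^k\cap B_1^c,\,\widetilde E_j\cap B_1)$. Each integrand is pointwise bounded by $\chi_{B_1}(x)\chi_{B_1^c}(y)\,|x-y|^{-(n+s)}$, whose total integral is $L(B_1,B_1^c)=\per^s_{\R^n}(B_1)<\infty$ because $B_1$ is a bounded smooth set. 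Combined with the a.e.\ convergence $\chi_{E_j^k}\to\chi_{E_j}$ (along subsequences), dominated convergence gives $L(\widetilde E_i\cap B_1, E_j^k\cap B_1^c)\to L(\widetilde E_i\cap B_1, E_j\cap B_1^c)$ and likewise for the mirror term---with no near/far splitting needed, since this single dominating kernel already controls both the boundary singularity and the tail at infinity. Replacing your near-part reasoning with this argument closes the gap; the rest of the proposal stands.
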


\begin{proof}
The proof follows along the same lines as the proof of~\cite{CRS}*{Theorem 3.3}. 
\end{proof}

We note the following characterization. 

\begin{lemma}\label{lem:localmin}
Let~$\E$ be admissible and $\Omega'\Subset \Omega$. 
If  $\mathbf{F}$ is admissible such that $F_i \cap (\Omega')^c = E_i \cap (\Omega')^c$, $-1 \leq i \leq 1$, for $\Omega' \Subset \Omega$, then
\begin{equation}\label{eq:equaldifference}
\F^s(\E,\Omega)-\F^s(\mathbf{F},\Omega)=\F^{s}(\E, \Omega')  -\F^{s}(\mathbf{F}, \Omega').
\end{equation}
Consequently, $\E$ is a local minimizer for $\F^s$ if and only if, for all $\Omega'\Subset \Omega$,
it holds that
\[
\F^s(\E, \Omega') \leq \F^s(\mathbf{F}, \Omega') \quad \hbox{whenever}~F_i \cap (\Omega')^c = E_i \cap (\Omega')^c~\hbox{for}~-1 \leq i \leq 1. 
\]
\end{lemma}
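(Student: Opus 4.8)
\textbf{Proof proposal for Lemma~\ref{lem:localmin}.}

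The plan is to isolate the part of the energy that depends on the configuration inside $\Omega'$ and show it is the \emph{only} part affected by replacing $\E$ with $\mathbf{F}$. Since $\sigma_{i,j}>0$ and the energy decomposes as a sum over pairs $(i,j)$, it suffices to prove the analogue of~\eqref{eq:equaldifference} for each term $\per^s_\Omega(A,B)$ separately: namely, if $A,B$ are disjoint and $A',B'$ are disjoint with $A'\cap(\Omega')^c=A\cap(\Omega')^c$ and $B'\cap(\Omega')^c=B\cap(\Omega')^c$, then
\[
\per^s_\Omega(A,B)-\per^s_\Omega(A',B')=\per^s_{\Omega'}(A,B)-\per^s_{\Omega'}(A',B').
\]
First I would apply the second identity of Lemma~\ref{lem:omegaomega} with the pair $(\Omega',\Omega)$, which gives
\[
\per^s_\Omega(A,B)-\per^s_{\Omega'}(A,B)=L(A\cap(\Omega\setminus\Omega'),B\cap(\Omega')^c)+L(A\cap\Omega^c,B\cap(\Omega\setminus\Omega')),
\]
and the same with $(A',B')$ in place of $(A,B)$. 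The key observation is that the right-hand side involves $A$ and $B$ only through their restrictions to $(\Omega')^c$ (note $\Omega\setminus\Omega'\subset(\Omega')^c$ since $\Omega'\subset\Omega$, so $A\cap(\Omega\setminus\Omega')$ and $A\cap\Omega^c$ are both determined by $A\cap(\Omega')^c$, and likewise for $B$). Hence the boundary-layer terms for $(A,B)$ and for $(A',B')$ coincide, and subtracting the two displayed identities yields exactly the desired per-pair equality.

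Summing over $-1\le i<j\le 1$ with weights $\sigma_{i,j}$ and recalling~\eqref{eq:sigma-energy} (with the notation~\eqref{eq:Fomega} making the domain explicit) gives~\eqref{eq:equaldifference}. The ``Consequently'' part is then immediate: if $\E$ is a local minimizer in the sense of~\eqref{eq:min} restricted to competitors agreeing outside $\Omega'$, then $\F^s(\E,\Omega)\le\F^s(\mathbf{F},\Omega)$ for all such $\mathbf{F}$, and~\eqref{eq:equaldifference} transfers this to $\F^s(\E,\Omega')\le\F^s(\mathbf{F},\Omega')$; conversely, if the latter holds for all $\Omega'\Subset\Omega$ and all admissible $\mathbf{F}$ agreeing with $\E$ outside $\Omega'$, then~\eqref{eq:equaldifference} gives back $\F^s(\E,\Omega)\le\F^s(\mathbf{F},\Omega)$, which is the definition of local minimizer.

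The only mild subtlety — and the step I would be most careful about — is the bookkeeping of which set intersections appear in the $L(\cdot,\cdot)$ terms of Lemma~\ref{lem:omegaomega} and verifying that each one depends only on data outside $\Omega'$; this is purely set-theoretic given $\Omega'\subset\Omega$, but worth stating cleanly. There is no analytic obstacle: everything reduces to the bilinearity of $L$ and the decomposition already recorded in Lemma~\ref{lem:omegaomega}.
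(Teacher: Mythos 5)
Your proof is correct, and the underlying idea is identical to the paper's: apply Lemma~\ref{lem:omegaomega} termwise, observe that the boundary-layer difference $\per^s_\Omega-\per^s_{\Omega'}$ depends only on data outside $\Omega'$ (since $\Omega\setminus\Omega'\subset(\Omega')^c$ and $\Omega^c\subset(\Omega')^c$), and cancel. The one genuine difference is the choice of decomposition of $\F^s$: the paper works with the $\alpha_i$-form~\eqref{eq:alpha-energy} and therefore applies the \emph{first} identity of Lemma~\ref{lem:omegaomega} to each $\per^s_\Omega(E_i)$, whereas you work with the $\sigma_{i,j}$-form~\eqref{eq:sigma-energy} and apply the \emph{second} identity to each pair term $\per^s_\Omega(E_i,E_j)$. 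Your route is marginally cleaner bookkeeping because it only needs $E_i$ and $E_j$ to agree outside $\Omega'$, without ever invoking the complements $E_i^c$; the paper's route needs the small additional observation that $F_i\cap(\Omega')^c=E_i\cap(\Omega')^c$ forces $F_i^c\cap(\Omega')^c=E_i^c\cap(\Omega')^c$. Since the two forms are equivalent by Lemma~\ref{lem:JAB}, the content is the same, and your handling of the ``Consequently'' part matches the paper's. No gap.
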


\begin{proof}
Using Lemma~\ref{lem:omegaomega}, we write
\begin{align*}
\F^s(\E,\Omega)-\F^{s}(\E, \Omega')
	&= \sum_{-1\leq i \leq 1} \alpha_i [\per_{\Omega}^s(E_i)-\per_{\Omega'}^s(E_i)]\\
	&=  \sum_{-1 \leq i \leq 1} \alpha_i  \bigg[L(E_i \cap (\Omega\setminus \Omega'), E_i^c \cap (\Omega')^c)+ L(E_i \cap \Omega^c, E_i^c \cap(\Omega\setminus \Omega')) \bigg].
\end{align*}
If $\mathbf{F}$ is such that $F_i \cap (\Omega')^c = E_i \cap (\Omega')^c$ for $-1 \leq i \leq 1$, then 
\begin{align*}
\F^s(\mathbf{F},\Omega)-\F^{s}(\mathbf{F}, \Omega')
	&=  \sum_{-1 \leq i \leq 1} \alpha_i  \bigg[L(F_i \cap (\Omega\setminus \Omega'), F_i^c \cap (\Omega')^c)+ L(F_i \cap \Omega^c, F_i^c \cap(\Omega\setminus \Omega')) \bigg]\\
	&=  \sum_{-1 \leq i \leq 1} \alpha_i  \bigg[L(E_i \cap (\Omega\setminus \Omega'), E_i^c \cap (\Omega')^c)+ L(E_i \cap \Omega^c, E_i^c \cap(\Omega\setminus \Omega')) \bigg].
\end{align*}
Therefore, 
\[
\F^s(\E,\Omega)-\F^{s}(\E, \Omega') = \F^s(\mathbf{F},\Omega)-\F^{s}(\mathbf{F}, \Omega')
\]
or, equivalently,~\eqref{eq:equaldifference} holds and the lemma is proved. 
\end{proof}

\section{Reducing the nonlocal energy}\label{sec:reduce}

In this section, we address the proof of Theorem~\ref{thm:reduceintro}. We show that, when $\alpha_0<0$,  one can cover an interface  between $E_1$ and $E_{-1}$ with an $\eps>0$ strip of phase~$0$ to reduce the nonlocal energy. Differently from the classical case,
this is not obvious in our setting, due to the presence of remote interactions.

Let us first establish the difference in energy when we enlarge the set $E_0$ in~$\Omega$. 

\begin{lemma}\label{lem:differenceinenergy}
Assume that $\E$ is admissible and that there is some measurable set $A \subset (E_1 \cup E_{-1}) \cap \Omega$.
Set
\[
\widetilde{E}_{-1} = E_{-1} \cap A^c, \quad \widetilde{E}_1 = E_1 \cap A^c, \quad \widetilde{E}_0 = E_0 \cup A.
\]
Then
\begin{align*}
\F^s(\E) -\F^s(\widetilde{\E})
&= \sigma_{-1,0}\left[ L(E_{-1} \cap A, E_{-1}^c) - L(E_{-1} \cap A, E_{-1} \cap A^c) \right]\\
	&\quad + \sigma_{0,1}\left[ L(E_{1} \cap A, E_{1}^c) - L(E_{1} \cap A, E_{1} \cap A^c)\right]\\
	&\quad - 2\alpha_0 \left[ 
	L(E_{-1} \cap A, E_{1} \cap A)
	+L( E_{-1} \cap A, E_1 \cap A^c)
	+L( E_1 \cap A, E_{-1} \cap A^c)
	\right].
\end{align*}
\end{lemma}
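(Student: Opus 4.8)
The strategy is to expand the energy~$\F^s$ using the $\sigma$-form~\eqref{eq:sigma-energy} and track only the interactions that change when we replace $\E$ by $\widetilde\E$. Since the modification is confined to $A\subset(E_1\cup E_{-1})\cap\Omega$ and $\widetilde E_i\cap\Omega^c=E_i\cap\Omega^c$, it is convenient to first rewrite both energies via the $\alpha$-form~\eqref{eq:alpha-energy}, so that
\[
\F^s(\E)-\F^s(\widetilde\E)=\sum_{-1\le i\le1}\alpha_i\bigl[\per_\Omega^s(E_i)-\per_\Omega^s(\widetilde E_i)\bigr].
\]
For $i=-1$ we have $\widetilde E_{-1}=E_{-1}\cap A^c$ and $|E_{-1}\cap A|>0$ in general, so Lemma~\ref{lem:E-difference}(1) applies and gives
\[
\per_\Omega^s(E_{-1})-\per_\Omega^s(E_{-1}\cap A^c)=L(E_{-1}\cap A,E_{-1}^c)-L(E_{-1}\cap A,E_{-1}\cap A^c);
\]
symmetrically for $i=1$. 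For $i=0$ we use Lemma~\ref{lem:E-difference}(2) with $E=E_0$ (note $|E_0\cap A|=0$ since $A\subset(E_1\cup E_{-1})\cap\Omega$), which yields
\[
\per_\Omega^s(E_0\cup A)-\per_\Omega^s(E_0)=L(A,E_0^c\cap A^c)-L(E_0,A),
\]
so that $\per_\Omega^s(E_0)-\per_\Omega^s(\widetilde E_0)=L(E_0,A)-L(A,E_0^c\cap A^c)$.

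\textbf{Reassembling.}
The next step is to substitute these three expressions into the $\alpha$-sum and then convert back to the $\sigma_{i,j}$ by means of~\eqref{eq:alphasums}, which gives $\alpha_{-1}+\alpha_0=\sigma_{-1,0}$, $\alpha_0+\alpha_1=\sigma_{0,1}$, $\alpha_{-1}+\alpha_1=\sigma_{-1,1}$. The key bookkeeping is to decompose $E_0^c\cap A^c$: since $\R^n=E_{-1}\sqcup E_0\sqcup E_1$ up to null sets and $A\subset E_{-1}\cup E_1$, one has, modulo null sets,
\[
E_0^c\cap A^c=\bigl((E_{-1}\cap A^c)\cup(E_1\cap A^c)\bigr),
\]
so $L(A,E_0^c\cap A^c)=L(E_{-1}\cap A,E_{-1}\cap A^c)+L(E_{-1}\cap A,E_1\cap A^c)+L(E_1\cap A,E_{-1}\cap A^c)+L(E_1\cap A,E_1\cap A^c)$, after splitting $A=(E_{-1}\cap A)\cup(E_1\cap A)$ on the first slot. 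Likewise $L(E_0,A)$ splits over $A=(E_{-1}\cap A)\cup(E_1\cap A)$, and the terms $L(E_{-1}\cap A,E_{-1}^c)$ get decomposed as $L(E_{-1}\cap A,E_0)+L(E_{-1}\cap A,E_1)$ (again up to null sets), with $L(E_{-1}\cap A,E_1)=L(E_{-1}\cap A,E_1\cap A)+L(E_{-1}\cap A,E_1\cap A^c)$, and symmetrically for $E_1$. Collecting the coefficient of each atomic term $L(\,\cdot\,,\cdot\,)$ and using the three identities above, the $L(E_{-1}\cap A,E_0)$ and $L(E_0,E_{-1}\cap A)$ contributions combine with coefficient $\alpha_{-1}+\alpha_0=\sigma_{-1,0}$ (and symmetrically $\sigma_{0,1}$ on the $E_1$ side), the diagonal terms $L(E_{-1}\cap A,E_{-1}\cap A^c)$ land with coefficient $-\sigma_{-1,0}$, and the cross terms $L(E_{-1}\cap A,E_1\cap A)$, $L(E_{-1}\cap A,E_1\cap A^c)$, $L(E_1\cap A,E_{-1}\cap A^c)$ pick up coefficient $-(\alpha_{-1}+\alpha_1)+2\alpha_0$… — more carefully, $\alpha_{-1}$ and $\alpha_1$ appear from the $\per^s_\Omega(E_{\mp1})$ differences while $2\alpha_0$-type contributions come from the $E_0$ side via the doubled identity $\alpha_{-1}+\alpha_1=\sigma_{-1,1}=2\alpha_0+(\text{stuff})$, and one checks the net cross-term coefficient is exactly $-2\alpha_0$. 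This produces precisely the claimed formula.

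\textbf{Main obstacle.}
The proof is entirely a matter of correct $L^1$-bookkeeping; there is no analytic subtlety. The one place that demands care is the algebra converting the $\alpha$-coefficients into the stated $\sigma$-coefficients, in particular isolating the factor $-2\alpha_0$ on the three cross terms $L(E_{-1}\cap A,E_1\cap A)+L(E_{-1}\cap A,E_1\cap A^c)+L(E_1\cap A,E_{-1}\cap A^c)$: this is where the relations $\alpha_{-1}+\alpha_0=\sigma_{-1,0}$, $\alpha_0+\alpha_1=\sigma_{0,1}$ must be used in the right combination, noting that the ``$E_0$-side'' identity contributes $\alpha_0$ a \emph{second} time with the opposite sign relative to the $\per^s_\Omega(E_{\pm1})$ differences. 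An alternative, and perhaps cleaner, route to the same end is to work directly with the $\sigma$-form~\eqref{eq:sigma-energy}: the only pairs $\per_\Omega^s(E_i,E_j)$ affected are $(E_{-1},E_0)$, $(E_0,E_1)$, and $(E_{-1},E_1)$, and using Definition~\ref{defn:ij-per} together with the additivity of $L$ in each slot over the partition $A\sqcup A^c$ and over $E_{-1}\sqcup E_0\sqcup E_1$, one writes out the six affected $\per_\Omega^s$ terms before and after the swap, subtracts, and reads off the three groups of terms with coefficients $\sigma_{-1,0}$, $\sigma_{0,1}$, and (collecting what is left) $-2\alpha_0=\sigma_{-1,1}-\sigma_{-1,0}-\sigma_{0,1}$. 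Either way the verification is routine once the set-theoretic decompositions are written down, and I would present whichever is shorter.
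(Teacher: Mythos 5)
Your proposal is correct and follows essentially the same route as the paper: rewrite $\F^s$ in $\alpha$-form, apply Lemma~\ref{lem:E-difference}(1) to the $E_{\pm1}$ differences and Lemma~\ref{lem:E-difference}(2) to the $E_0$ difference, then decompose over $A=(E_{-1}\cap A)\cup(E_1\cap A)$ and $E_0^c=E_{-1}\cup E_1$ and reassemble via $\alpha_i+\alpha_j=\sigma_{i,j}$. The only difference is stylistic: the paper first adds and subtracts $\alpha_0$ to form $(\alpha_{\pm1}+\alpha_0)=\sigma_{\pm1,0}$ and then shows the leftover $-\alpha_0[\cdots]$ bracket equals twice the three cross terms, whereas you propose to decompose every $L$-interaction into atomic pieces and collect coefficients directly. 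Both are routine bookkeeping and yield the same coefficients (e.g.\ $\sigma_{-1,0}+\sigma_{0,1}-2\alpha_0=\alpha_{-1}+\alpha_1=\sigma_{-1,1}$ on $L(E_{-1}\cap A,E_1\cap A)$); your "Reassembling" paragraph is a bit hand-wavy at the step isolating the $-2\alpha_0$ factor, but the plan is sound and would close cleanly once the atomic terms are tabulated.
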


\begin{proof}
For ease, we set
\begin{align*}
D
	:=  \F^s(\E) -\F^s(\widetilde{\E})	
	&= \alpha_{-1} \left[ \per_\Omega^s(E_{-1}) -\per_\Omega^s(E_{-1} \cap A^c) \right]\\
	&\quad +\alpha_1 \left[ \per_\Omega^s(E_{1}) -\per_\Omega^s(E_{1} \cap A^c) \right]\\
	&\quad + \alpha_0 \left[  \per_\Omega^s(E_{0})-\per_\Omega^s(E_{0} \cup A)\right].
\end{align*}
By Lemma~\ref{lem:E-difference}, one has
\begin{align*}
D
	&= \alpha_{-1} \left[ L(E_{-1} \cap A, E_{-1}^c) - L(E_{-1} \cap A, E_{-1} \cap A^c) \right]\\
	&\quad +  \alpha_{1} \left[ L(E_{1} \cap A, E_{1}^c) - L(E_{1} \cap A, E_{1} \cap A^c)\right]\\
	&\quad - \alpha_0 \left[  L( A, E_0^c \cap A^c)-L(E_0,  A)\right]\\
	&= (\alpha_{-1} + \alpha_0)\left[ L(E_{-1} \cap A, E_{-1}^c) - L(E_{-1} \cap A, E_{-1} \cap A^c) \right]\\
	&\quad +  (\alpha_{1} +\alpha_0)\left[ L(E_{1} \cap A, E_{1}^c) - L(E_{1} \cap A, E_{1} \cap A^c)\right]\\
	&\quad - \alpha_0 \left[  L( A, E_0^c \cap A^c)
	- L(E_0,  A)\right.\\
	&\qquad\qquad \left.+
	 L(E_{-1} \cap A, E_{-1}^c) - L(E_{-1} \cap A, E_{-1} \cap A^c) 
\right.\\
	&\qquad\qquad \left.	+L(E_{1} \cap A, E_{1}^c) - L(E_{1} \cap A, E_{1} \cap A^c)
	\right].
\end{align*}
To simplify the last two lines, we write
\begin{align*}
 L( A, E_0^c \cap A^c)
	&=  L( A, E_1 \cap A^c) +  L( A, E_{-1} \cap A^c)\\
	&=  L( E_1 \cap A, E_1 \cap A^c) + L( E_{-1} \cap A, E_1 \cap A^c) \\
	&\quad +  L( E_1 \cap A, E_{-1} \cap A^c)+  L( E_{-1} \cap A, E_{-1} \cap A^c)
\end{align*}
and
\begin{align*}
L&(E_{-1} \cap A, E_{-1}^c)+L(E_{1} \cap A, E_{1}^c)\\
&= L(E_{-1} \cap A, E_0)+L(E_{1} \cap A, E_0)
 + L(E_{-1} \cap A, E_{1})+L(E_{1} \cap A, E_{-1})\\
&= L(A, E_0)
 + L(E_{-1} \cap A, E_{1} \cap A)+ L(E_{-1} \cap A, E_{1} \cap A^c)\\
&\quad\qquad+L(E_{1} \cap A, E_{-1} \cap A) + L(E_{1} \cap A, E_{-1}\cap A^c).
\end{align*}
Therefore, we find that
\begin{align*}
D
	&= (\alpha_{-1} + \alpha_0)\left[ L(E_{-1} \cap A, E_{-1}^c) - L(E_{-1} \cap A, E_{-1} \cap A^c) \right]\\
	&\quad +  (\alpha_{1} +\alpha_0)\left[ L(E_{1} \cap A, E_{1}^c) - L(E_{1} \cap A, E_{1} \cap A^c)\right]\\
	&\quad - 2\alpha_0 \left[ 
	L(E_{-1} \cap A, E_{1} \cap A)
	+L( E_{-1} \cap A, E_1 \cap A^c)
	+L( E_1 \cap A, E_{-1} \cap A^c)
	\right],
\end{align*}
and the conclusion follows by recalling ~\eqref{eq:alphasums}.
\end{proof}

It will also be convenient to use the following consequence of Lemma~\ref{lem:differenceinenergy}. 

\begin{corollary}\label{cor:one-sided-strip}
Assume that $\E$ is admissible and that there is some measurable set $A \subset E_1 \cap \Omega$.
Set
\[
\widetilde{E}_{-1} = E_{-1}, \quad \widetilde{E}_1 = E_1 \cap A^c, \quad \widetilde{E}_0 = E_0 \cup A.
\]
Then
\[
\F^s(\E) -\F^s(\widetilde{\E})
=- 2\alpha_0 L(A, E_{-1})
		+ \sigma_{0,1} \left[ L(A, E_1^c) - L(A, E_1 \cap A^c)\right].
\]
\end{corollary}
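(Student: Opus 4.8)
The plan is to derive Corollary~\ref{cor:one-sided-strip} directly from Lemma~\ref{lem:differenceinenergy} by specializing the choice of the set $A$. In Lemma~\ref{lem:differenceinenergy}, the set $A$ is allowed to be any measurable subset of $(E_1 \cup E_{-1}) \cap \Omega$; here we simply take $A \subset E_1 \cap \Omega$. The key observation is that, with this restriction, $A \cap E_{-1} = \emptyset$ up to a null set, so every term in the formula from Lemma~\ref{lem:differenceinenergy} involving $E_{-1} \cap A$ vanishes. Concretely, $L(E_{-1} \cap A, E_{-1}^c) = 0$, $L(E_{-1} \cap A, E_{-1} \cap A^c) = 0$, $L(E_{-1} \cap A, E_1 \cap A) = 0$, and $L(E_{-1} \cap A, E_1 \cap A^c) = 0$, because $L$ vanishes whenever one of its arguments is a null set.

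Carrying this out: the first bracket in Lemma~\ref{lem:differenceinenergy}, namely $\sigma_{-1,0}\big[L(E_{-1} \cap A, E_{-1}^c) - L(E_{-1} \cap A, E_{-1} \cap A^c)\big]$, is zero. In the third bracket, the term $L(E_{-1} \cap A, E_1 \cap A)$ is zero and $L(E_{-1} \cap A, E_1 \cap A^c)$ is zero, leaving only $-2\alpha_0\, L(E_1 \cap A, E_{-1} \cap A^c)$; since $A \subset E_1$, we have $E_1 \cap A = A$, and since $A \cap E_{-1} = \emptyset$ implies $E_{-1} \cap A^c = E_{-1}$ up to a null set, this term equals $-2\alpha_0\, L(A, E_{-1})$. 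Finally, the second bracket is $\sigma_{0,1}\big[L(E_1 \cap A, E_1^c) - L(E_1 \cap A, E_1 \cap A^c)\big] = \sigma_{0,1}\big[L(A, E_1^c) - L(A, E_1 \cap A^c)\big]$, again using $E_1 \cap A = A$. Summing these gives exactly the claimed identity
\[
\F^s(\E) - \F^s(\widetilde{\E}) = -2\alpha_0\, L(A, E_{-1}) + \sigma_{0,1}\big[L(A, E_1^c) - L(A, E_1 \cap A^c)\big].
\]
One should also check that the new partition is admissible, which is immediate: $\widetilde{E}_0 = E_0 \cup A$ and $\widetilde{E}_1 = E_1 \cap A^c = E_1 \setminus A$ are disjoint from each other and from $\widetilde{E}_{-1} = E_{-1}$ (since $A \subset E_1$ is disjoint from $E_{-1}$), and their union is still all of $\R^n$ up to a null set.

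There is no real obstacle here — the proof is a one-line specialization plus a bookkeeping check that the $E_{-1}\cap A$ terms drop out. The only point requiring a moment's care is the identification $E_{-1} \cap A^c = E_{-1}$ modulo null sets, which holds precisely because $A \subset E_1$ and $|E_1 \cap E_{-1}| = 0$ by admissibility; this is what converts $L(E_1\cap A, E_{-1}\cap A^c)$ into $L(A, E_{-1})$. I would write the proof in two or three sentences, invoking Lemma~\ref{lem:differenceinenergy} and noting that the three $L$-terms containing $E_{-1}\cap A$ are null.
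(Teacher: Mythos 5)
Your proof is correct and takes essentially the same route as the paper: the paper's own proof is the one-line observation that $E_{-1}\cap A=\emptyset$ and $E_1\cap A=A$, after which Lemma~\ref{lem:differenceinenergy} collapses to the stated identity. You simply spell out the resulting bookkeeping (including the identification $E_{-1}\cap A^c=E_{-1}$ modulo null sets) in more detail.
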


\begin{proof}
The expression follows from Lemma~\ref{lem:differenceinenergy} since $E_{-1} \cap A = \emptyset$ and $E_1 \cap A= A$. 
\end{proof} 

We now give a more precise statement and proof of Theorem~\ref{thm:reduceintro}. Denote points $x\in \R^n$, with~$n \geq 2$, by $x = (x',x_n) \in \R^{n-1} \times \R$.
Let~$B'_r(x')\subset \R^{n-1}$ denote the ball of radius $r>0$ centered at $x'$ and write $B'_r = B'_r(0)$. 
We provide full details of the case $n=1$ in Appendix~\ref{appendix:1D}.  

\begin{theorem}\label{thm:reduce}
Assume that $s \in (0,1)$ and $\alpha_0<0$. 
For $R,r>0$, assume that $B_r' \times (-R,R) \subset \Omega$. 

Suppose that $\psi: B_r' \to \R$ is a Lipschitz  function such that $\psi(0) = 0$, $\nabla \psi(0) = 0$, and
there is a constant $c_0 \in (0,1)$ such that
\begin{equation}\label{eq:psi}
\|\psi\|_{C^{0,1}(B_r')} \leq \frac{c_0R}{4r}.
\end{equation}
Consider an admissible~$\E$ with
\begin{align*}
\left\{(x',x_n) \in B_r' \times \R~\hbox{s.t.}~\psi(x')< x_n < \psi(x')+\frac{R}{2}\right\} &\subset E_1,\\ 
\left\{(x',x_n) \in B_r' \times \R~\hbox{s.t.}~\psi(x')-\frac{R}{2} < x_n< \psi(x')\right\} &\subset E_{-1}.
\end{align*}

Then, there exists $C_0 = C_0 (n,s,r,R,c_0)>0$  and $\eps_0 = \eps_0(n,s,r,R,c_0,\sigma_{i,j}) \in (0,1)$ such that, for all $\eps \in (0, \eps_0)$, there is an admissible  $\E^\eps$ satisfying 
\[
\F^s(\E) - \F^s(\E^\eps) \geq C_0 |\alpha_0| \eps^{1-s}.
\]
\end{theorem}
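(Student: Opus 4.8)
The plan is to take $\E^\eps$ to be the competitor obtained by inserting an $\eps$-thick layer of phase $0$ following the graph of $\psi$, i.e.\ replace the slab $\{\psi(x') \le x_n \le \psi(x')+\eps\} \cap (B_r' \times \R) =: A$ (a subset of $E_1 \cap \Omega$) by phase $0$, so that, in the notation of Corollary~\ref{cor:one-sided-strip}, $\widetilde E_{-1} = E_{-1}$, $\widetilde E_1 = E_1 \cap A^c$, $\widetilde E_0 = E_0 \cup A$. Strictly, one should flatten via the bi-Lipschitz change of variables $(x',x_n) \mapsto (x', x_n - \psi(x'))$, whose Jacobian is $1$ and which distorts distances by a factor controlled by~\eqref{eq:psi}; the hypothesis $\|\psi\|_{C^{0,1}} \le c_0 R/(4r)$ with $c_0 \in (0,1)$ is exactly what keeps $A$ inside $\Omega$ and keeps all the comparison constants uniform. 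Corollary~\ref{cor:one-sided-strip} then gives
\[
\F^s(\E) - \F^s(\E^\eps) = 2|\alpha_0|\, L(A, E_{-1}) + \sigma_{0,1}\bigl[L(A, E_1^c) - L(A, E_1 \cap A^c)\bigr],
\]
using $\alpha_0 < 0$, and it remains to show the right-hand side is $\ge C_0|\alpha_0|\eps^{1-s}$ for small $\eps$.

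The two terms are estimated in opposite directions. For the \emph{gain} term $L(A, E_{-1})$: after flattening, $A \supset B_\rho' \times (0,\eps)$ for a fixed $\rho = \rho(r,R,c_0) > 0$, and $E_{-1} \supset B_\rho' \times (-R/4, 0)$ say, so
\[
L(A, E_{-1}) \ge \iint_{(B_\rho' \times (0,\eps)) \times (B_\rho' \times (-R/4,0))} \frac{dx\,dy}{|x-y|^{n+s}} \ge c_1 \eps^{1-s}
\]
for some $c_1 = c_1(n,s,\rho) > 0$; the scaling $\eps^{1-s}$ comes from integrating $|x_n - y_n|^{-1-s}$ in the $x_n$-variable over $(0,\eps)$ against $y_n < 0$, which produces $\int_0^\eps t^{-s}\,dt \sim \eps^{1-s}$ after bounding the transverse $(n-1)$-dimensional integral below on a fixed ball. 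One must be a little careful that the distortion from $\psi$ only changes this by a bounded multiplicative factor, which~\eqref{eq:psi} guarantees. For the \emph{loss} term, I would show $L(A, E_1^c) - L(A, E_1 \cap A^c) \ge 0$ is not actually needed with a good sign; rather I want an \emph{upper} bound $L(A, E_1^c) \le C_2 \eps^{1-s}$: here $A$ is a slab of thickness $\eps$ and $E_1^c$ lies (locally) on the far side, plus a contribution from $\partial\Omega$, but the near part $E_1^c \cap \{\,|x_n - \psi| \lesssim 1\,\}$ is $E_0 \cup E_{-1}$ sitting across a face of thickness comparable to what $A$ itself occupies, and the standard slab estimate $\iint_{(\text{slab}_\eps)\times(\text{halfspace})} |x-y|^{-n-s} \le C\eps^{1-s}$ applies; the far-field and $\partial\Omega$ contributions are bounded by $C\eps$ (they see $A$ as a set of measure $|A| \sim \eps$ at distance bounded below), hence absorbed. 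Thus
\[
\F^s(\E) - \F^s(\E^\eps) \ge 2|\alpha_0| c_1 \eps^{1-s} - \sigma_{0,1} C_2 \eps^{1-s} - \sigma_{0,1} C_3 \eps,
\]
which is $\ge C_0|\alpha_0|\eps^{1-s}$ once $\eps < \eps_0$ provided we can arrange $2c_1 > $ the coefficient of the competing term — and this is where the constant bookkeeping matters.

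The main obstacle, as the inequality above shows, is that the $\sigma_{0,1} C_2 \eps^{1-s}$ loss term is of the \emph{same order} $\eps^{1-s}$ as the gain, so a crude estimate will not close. The resolution is that the loss term $L(A, E_1^c) - L(A, E_1 \cap A^c)$ should be compared more carefully: $L(A, E_1 \cap A^c)$ (the interaction of the new layer with the remaining phase $1$ just above it) \emph{also} contributes an $\eps^{1-s}$ amount that cancels most of $L(A, E_1^c)$, leaving a genuinely smaller remainder — intuitively, moving the $E_0$/$E_1$ interface by $\eps$ costs a $\per^s$-type quantity that is $o(1)$ as $\eps \searrow 0$ but the net sign of the whole bracket against $2|\alpha_0|$ is what one must control. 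Concretely I would split $E_1^c = (E_1^c \cap \Omega_{\text{near}}) \cup (\text{rest})$ around the flattened interface, use that on $\Omega_{\text{near}}$, $E_1^c = E_{-1} \cup E_0 \cup A$ so $L(A,E_1^c) - L(A, E_1\cap A^c) = L(A, E_{-1}) + L(A, E_0 \cap A^c) + L(A,A) - L(A, E_1 \cap A^c)$, and then the point is that $L(A, E_{-1})$ reappears with a \emph{favorable} sign — so in fact the full expression is $\ge (2|\alpha_0| + \sigma_{0,1}) L(A, E_{-1}) - \sigma_{0,1}[L(A, E_1 \cap A^c) - L(A, E_0 \cap A^c) - L(A,A)]$, and one shows the bracketed difference is $\le C\eps$ by a symmetry/reflection argument across the mid-plane of the slab $A$ (the layer of phase $1$ above $A$ and the region below $A$ are both at distance $0$ from $A$ but the graph geometry and the finite extent $R$ make the difference lower-order). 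Carrying out this cancellation cleanly, with the bi-Lipschitz distortion tracked throughout, is the technical heart; everything else is the elementary slab computation $\int_0^\eps t^{-s}\,dt \sim \eps^{1-s}$ and keeping the domain inclusions valid via~\eqref{eq:psi}.
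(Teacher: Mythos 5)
Your setup matches the paper exactly: the same competitor $\E^\eps$, the same appeal to Corollary~\ref{cor:one-sided-strip}, and the same slab estimate $L(A,E_{-1})\gtrsim\eps^{1-s}$ for the gain. The gap lies in the loss-term analysis. You write $E_1^c=E_{-1}\cup E_0\cup A$ and thereby introduce a divergent $L(A,A)$; but $A\subset E_1$ in the \emph{original} partition, so $E_1^c=E_0\cup E_{-1}$ only. You appear to be mixing up $\E$ with $\E^\eps$, while Corollary~\ref{cor:one-sided-strip} is stated entirely in terms of the original sets.

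After removing that error, your decomposition reads $(2|\alpha_0|+\sigma_{0,1})L(A,E_{-1})-\sigma_{0,1}\bigl[L(A,E_1\cap A^c)-L(A,E_0)\bigr]$, and you claim the bracket is $\leq C\eps$ by a reflection argument. That claim is false in general: $L(A,E_1\cap A^c)$ is of order $\eps^{1-s}$ (the strip of $E_1$ directly above $A$ touches it), while $E_0$ carries no geometric constraint near $A$ and may contribute negligibly, making the bracket of order $\eps^{1-s}$, not $\eps$. A reflection across the mid-plane of $A$ pairs the $E_1$-strip above $A$ with the $E_{-1}$-strip \emph{below}; it cannot pair $E_1$ with $E_0$. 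By absorbing $\sigma_{0,1}L(A,E_{-1})$ into the gain you removed exactly the partner of the near-field singularity in $L(A,E_1\cap A^c)$. The paper keeps the grouping $L(A,E_1\cap A^c)-L(A,E_1^c)$, reflects the $E_1$-strip above across the mid-plane of $A$ onto a subset of $E_{-1}$, and what survives are far-field interactions (lateral across $(B_r')^c\times\R$ and at vertical distance $\gtrsim R$), which are $O(\eps)$ because $|A|\sim\eps$ and the distance is bounded below. Then $2|\alpha_0|C_0\eps^{1-s}-\sigma_{0,1}C_1\eps\geq|\alpha_0|C_0\eps^{1-s}$ for $\eps$ small closes the argument. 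Your tools (slab estimate, reflection) are the right ones; the regrouping is what severs the cancellation.
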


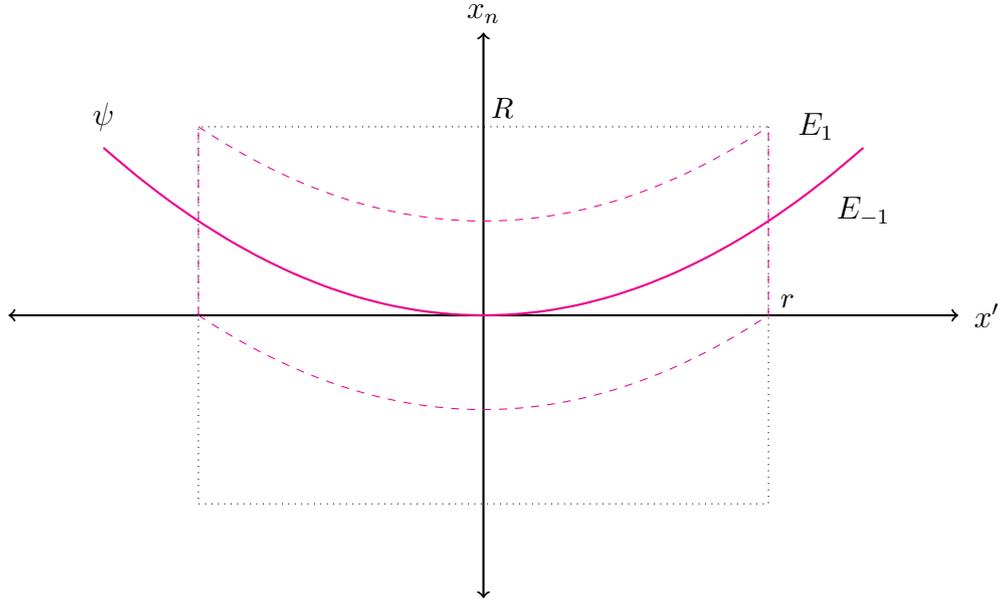
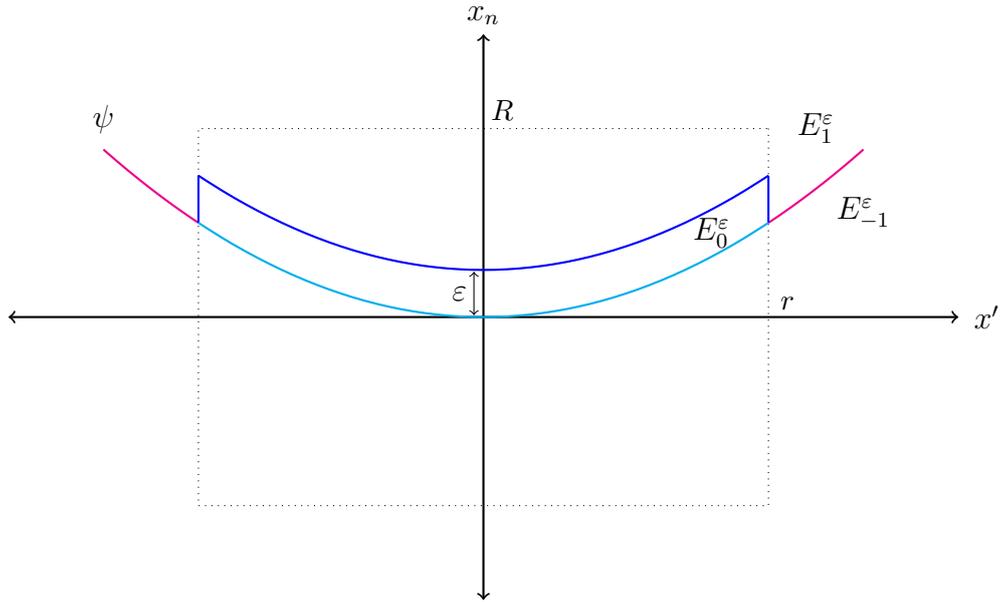
\begin{figure}[htb]
\begin{center}
\subfloat[An admissible $\E$ in $B_r' \times (-R,R)$]{
\begin{tikzpicture}[scale=2.5]
\draw[thick,<->] (-2.5,0)--(2.5,0);
\draw[thick,<->] (0,-1.5)--(0,1.5);
\node[] at (2.65,0) {$x'$};
\node[] at (0,1.6) {$x_n$};
\draw[dotted] (1.5,1)--(1.5,-1)--(-1.5,-1)--(-1.5,1)--(1.5,1);
\node[] at (1.6,.075) {\small$r$};
\node[] at (.1,1.1) {\small$R$};
\draw[magenta,thick] plot[smooth,domain=-2:2] (\x, {2*\x*\x/9} );
\draw[magenta,dashed] plot[smooth,domain=-1.5:1.5] (\x, {2*\x*\x/9+.5} );
\draw[magenta,dashed] plot[smooth,domain=-1.5:1.5] (\x, {2*\x*\x/9-.5} );
\draw[magenta,dashed] (-1.5,0)--(-1.5,1);
\draw[magenta,dashed] (1.5,0)--(1.5,1);
\node[] at (1.75,1) {$E_1$};
\node[] at (2,.55) {$E_{-1}$};
\node[] at (-2, 1.05) {$\psi$};
\end{tikzpicture}
}
\hspace{1.5cm}
\subfloat[The corresponding $\E^\varepsilon$ in $B_r' \times (-R,R)$]{
\begin{tikzpicture}[scale=2.5]
\draw[thick,<->] (-2.5,0)--(2.5,0);
\draw[thick,<->] (0,-1.5)--(0,1.5);
\node[] at (2.65,0) {$x'$};
\node[] at (0,1.6) {$x_n$};
\draw[dotted] (1.5,1)--(1.5,-1)--(-1.5,-1)--(-1.5,1)--(1.5,1);
\node[] at (1.6,.075) {\small$r$};
\node[] at (.1,1.1) {\small$R$};
\draw[magenta,dotted] plot[smooth,domain=-1.5:1.5] (\x, {2*\x*\x/9} );
\draw[magenta,thick] plot[smooth,domain=-2:-1.5] (\x, {2*\x*\x/9} );
\draw[magenta,thick] plot[smooth,domain=1.5:2] (\x, {2*\x*\x/9} );
\draw[blue,thick] plot[smooth,domain=-1.5:1.5] (\x, {2*\x*\x/9+.25} );
\draw[blue,thick] (1.5,.5)--(1.5,.75);
\draw[blue,thick] (-1.5,.5)--(-1.5,.75);
\draw[cyan,thick] plot[smooth,domain=-1.5:1.5] (\x, {2*\x*\x/9} );
\node[] at (1.75,1) {$E_1^\varepsilon$};
\node[] at (2,.55) {$E_{-1}^\varepsilon$};
\node[] at (1.2,.445) { $E_0^\varepsilon$};
\draw[<->] (-.05,.01)--(-.05,.24);
\node[] at (-.125, .125) {$\varepsilon$};
\node[] at (-2, 1.05) {$\psi$};
\end{tikzpicture}
}
\end{center}
\caption{The geometry of Theorem~\ref{thm:reduce}}
\label{fig:strip}
\end{figure}

\begin{proof}
Let~$x' \in B_r'$. By~\eqref{eq:psi}, 
\[
|\psi(x')| 
\leq \frac{c_0 R}{2r}|x'| \leq \frac{c_0 R}{2} \leq \frac{R}{2}. 
\]
For $0 < \eps < \min\{1,R\}/2$, define the sets
\[
A^\eps := \big\{(x',x_n) \in B_r' \times \R~\hbox{s.t.}~\psi(x') < x_n < \psi(x') + \eps\big\} \subset B_r' \times (-R,R) \subset \Omega
\]
and admissible $\E^\eps$ by 
\[
E_1^\eps = E_1 \cap (A^\eps)^c, \quad E_0 = E_0 \cup A^\eps, \quad E_{-1}^\eps = E_{-1}. 
\]
See Figure~\ref{fig:strip}.

By Corollary~\ref{cor:one-sided-strip}, it holds that
\begin{equation}\label{eq:Rndiffnew}
\F^s(\E) - \F^s(\E^\eps)
	= 2|\alpha_0|L(A^\eps, E_{-1})
		- \sigma_{0,1} \left[L(A^\eps, E_1 \cap (A^\eps)^c)- L(A^\eps, E_{1}^c)\right]. 
\end{equation}
In the following, we let $C>0$ denote an arbitrary constant, depending on $n$, $s$, $r$ and $R$, that perhaps changes from line to line. 

\medskip
\underline{\textbf{Step 1}}. 
We begin by showing that there is some $C_0>0$, independent of $\eps$, such that
\begin{equation}\label{eq:below}
L(A^\eps, E_{-1}) \geq C_0 \eps^{1-s}. 
\end{equation}
Since 
\[
\big\{(x',x_n) \in B_r' \times \R~\hbox{s.t.}~\psi(x')-\eps< x_n < \psi(x') \big\}\subset E_{-1},
\]
we find that
\begin{align*}
L(A^\eps, E_{-1})
	&\geq \int_{B_{(1-\eps)r}'} dx' \int_{\psi(x')}^{\psi(x')+\eps} dx_n \int_{B_r'} dy' \int_{\psi(y')-\eps}^{\psi(y')}dy_n \frac{1}{(|x'-y'|^2 + |x_n-y_n|^2)^{\frac{n+s}{2}}}.
\end{align*}
Making the change of variables $x_n \mapsto x_n - \psi'(x')$ and $y_n \mapsto y_n - \psi'(y')$, we find
\begin{align*}
L(A^\eps, E_{-1})
	&\geq \int_{B_{(1-\eps)r}'} dx' \int_{0}^{\eps} dx_n \int_{B_r'} dy' \int_{-\eps}^{0}dy_n \frac{1}{(|x'-y'|^2 + |(x_n-y_n) -(\psi(x') - \psi(y'))|^2)^{\frac{n+s}{2}}}.
\end{align*}
Since $\psi$ is Lipschitz,
\begin{align*}
|x'-y'|^2 + |(x_n-y_n) -(\psi(x') - \psi(y'))|^2
	&\leq  |x'-y'|^2 +2(|x_n-y_n|^2 + |\psi(x') - \psi(y')|^2) \\
	&\leq C (|x'-y'|^2 + |x_n-y_n|^2),
\end{align*}
and we have
\[
L(A^\eps, E_{-1})
	\geq C \int_{B_{(1-\eps)r}'} dx' \int_{0}^{\eps} dx_n \int_{B_r'} dy' \int_{-\eps}^{0}dy_n \frac{1}{(|x'-y'|^2 + |x_n-y_n|^2)^{\frac{n+s}{2}}}.
\]

For $(x',x_n) \in B_{(1-\eps)r}'\times (0,\eps)$, consider 
\[
J(x',x_n) := \int_{-\eps}^0  \int_{B_r'}  \frac{dy' dy_n}{(|x'-y'|^2 + |x_n-y_n|^2)^{\frac{n+s}{2}}}.
\]
With the change of variables $z := (x'-y')/|x_n-y_n|$, we find
\[
J(x',x_n)
	= \int_{-\eps}^0  \frac{1}{|x_n-y_n|^{1+s}}  \left[\int_{B_{\frac{r}{|x_n-y_n|}}'(\frac{x'}{|x_n-y_n|})}  \frac{dz}{(1 + |z|^2)^{\frac{n+s}{2}}} \right] dy_n
\]
Since $\eps <\frac12$, it holds that
\[
B_{\frac{r}{2}}' \subset B_{\frac{r}{|x_n-y_n|}}'\bigg(\frac{x'}{|x_n-y_n|}\bigg),
\]
and we have
\begin{align*}&
J(x',x_n)\geq  \int_{-\eps}^0  \frac{1}{|x_n-y_n|^{1+s}}  \left[\int_{B_{\frac{r}{2}}'}  \frac{dz}{(1 + |z|^2)^{\frac{n+s}{2}}} \right] dy_n\\
&\qquad= C \int_{-\eps}^0 \frac{1}{(x_n-y_n)^{1+s}} \, dy_n 
= C [x_n^{-s}-(x_n+\eps)^{-s}].
\end{align*}
Therefore, 
\begin{equation}\label{eq:lower}
\begin{aligned}
&L(A^\eps, E_{-1})
\geq C \int_0^\eps \int_{B_{(1-\eps)r}'} J(x',x_n) dx' dx_n\\
&\qquad\geq C|B_{\frac{r}{2}}'| \int_0^\eps [x_n^{-s}-(x_n+\eps)^{-s}]  dx_n = C \eps^{1-s},
\end{aligned}
\end{equation}
and~\eqref{eq:below} holds. 

\medskip
\underline{\textbf{Step 2}}. 
We next show that there is some $C_1>0$, independent of $\eps$, such that
\begin{equation}\label{eq:above}
L(A^\eps, E_1 \cap (A^\eps)^c)- L(A^\eps, E_{1}^c) \leq C_1 \eps. 
\end{equation}
First, we use the fact that
\[
E_1 \cap (A^\eps)^c \subset [(B_r')^c \times \R] \cup
\left\{(x',x_n) \in B_r' \times \R~\hbox{s.t.}~x_n < \psi(x')-\frac{R}{2}~\hbox{or}~x_n> \psi(x')+\eps \right\}
\]
to estimate
\begin{align*}
L&(A^\eps, E_1 \cap (A^\eps)^c)\\
	&\leq \int_{B_r'} dx' \int_{\psi(x')}^{\psi(x')+\eps} dx_n
	\int_{(B_r')^c} dy' \int_{-\infty}^{\infty} dy_n \frac{1}{(|x'-y'|^2 +|x_n-y_n|^2)^{\frac{n+s}{2}}}\\
	&\quad + \int_{B_r'} dx' \int_{\psi(x')}^{\psi(x')+\eps} dx_n
	\int_{B_r'} dy' \int_{\{\psi(y')-\frac{R}{2} \leq y_n \leq \psi(y')+\eps\}^c} dy_n \frac{1}{(|x'-y'|^2 +|x_n-y_n|^2)^{\frac{n+s}{2}}}\\
	&=: I_1 + I_2. 
\end{align*}

Furthermore, since
\[
E_1^c = E_0 \cup E_{-1} \supset \left\{(x',x_n)\in B_r' \times \R~\hbox{s.t.}~\psi(x')-\frac{R}{2}< x_n < \psi(x') \right\},
\]
we find
\begin{align*}
L(A^\eps, E_{1}^c)
	&\geq \int_{B_r'} dx' \int_{\psi(x')}^{\psi(x')+\eps} dx_n
	\int_{B_r'} dy' \int_{\psi(y')-\frac{R}{2}}^{\psi(y')} dy_n \frac{1}{(|x'-y'|^2 +|x_n-y_n|^2)^{\frac{n+s}{2}}}\\
	&=: I_3. 
\end{align*}
Combining, we have
\begin{equation}\label{eq:I1I2I3}
L(A^\eps, E_1 \cap (A^\eps)^c)- L(A^\eps, E_{1}^c)
\geq I_1 + I_2 - I_3. 
\end{equation}
We will estimate $I_1$ and $I_2-I_3$ separately from below. 

First, in $I_2$, we make the change of variables $x_n \mapsto x_n - \psi(x')$ and $y_n \mapsto y_n - \psi(y')$ to obtain
\begin{align*}
I_2
&= \int_{B_r'} dx' \int_{0}^{\eps} dx_n \int_{B_r'} dy' \int_{\{-\frac{R}{2} \leq x_n \leq \eps\}^c} dy_n \frac{1}{(|x'-y'|^2 +|(x_n-y_n) - (\psi(x') - \psi(y'))|^2)^{\frac{n+s}{2}}}.
\end{align*}
Similarly in $I_3$, we make the change of variables $x_n \mapsto -x_n+ \psi(x')+\eps$ and $y_n \mapsto -y_n+\psi(y')+\eps$ to obtain
\[
I_3= \int_{B_r'} dx' \int_{0}^{\eps} dx_n
	\int_{B_r'} dy' \int_\eps^{\eps+\frac{R}{2}} dy_n \frac{1}{(|x'-y'|^2 +|(x_n-y_n) - (\psi(x') - \psi(y'))|^2)^{\frac{n+s}{2}}}.
\]
Together, we find
\begin{align*}
&I_2 - I_3\\
	&\quad= 	\int_{B_r'} dx' \int_{0}^{\eps} dx_n \int_{B_r'} dy' \int_{\{-\frac{R}{2} \leq x_n \leq \eps+\frac{R}{2}\}^c} dy_n \frac{1}{(|x'-y'|^2 +|(x_n-y_n) - (\psi(x') - \psi(y'))|^2)^{\frac{n+s}{2}}}\\
	&\quad\leq \int_{B_r'} dx' \int_{0}^{\eps} dx_n \int_{B_r'} dy' \int_{\{|x_n-y_n|>\frac{R}{2}\}} dy_n \frac{1}{|(x_n-y_n) - (\psi(x') - \psi(y'))|^{n+s}}.
\end{align*}

For $x',y' \in B_r'$ and $|x_n-y_n|>\frac{R}{2}$, we use~\eqref{eq:psi} to find
\[
|(x_n-y_n) - (\psi(x') - \psi(y'))|
	\geq |x_n-y_n| - \|\psi\|_{C^{0,1}(B_r')} 2r
	\geq (1-c_0) |x_n-y_n|,
\]
and thus
\begin{equation}\label{eq:I2I3new}
\begin{aligned}
&I_2 - I_3
\leq  C\int_{B_r'} dx' \int_{0}^{\eps} dx_n \int_{B_r'} dy' \int_{\{|x_n-y_n|>\frac{R}{2}\}} dy_n \frac{1}{|x_n-y_n|^{n+s}}\\
&\qquad= 2|B_r'|^2 \int_0^\eps dx_n \int_R^\infty \frac{dt}{t^{n+s}} = C\eps. 
\end{aligned}
\end{equation}

Now, regarding $I_1$, for $x' \in B_r'$ and $y' \in (B_r')^c$, consider
\[
J_1(x',y') := \int_{\psi(x')}^{\psi(x')+\eps}\int_{-\infty}^{\infty} \frac{dy_n dx_n}{(|x'-y'|^2 +|x_n-y_n|^2)^{\frac{n+s}{2}}}.
\]  
Making the change of variables $\xi = x_n/|x'-y'|$ and $\rho = y_n/|x'-y'|$, we have
\begin{align*}
J_1(x',y') 
&= \frac{1}{|x'-y'|^{n+s-2}} \int_{\frac{\psi(x')}{|x'-y'|}}^{\frac{\psi(x')+\eps}{|x'-y'|}}  \int_{-\infty}^\infty \frac{d\rho d\xi}{(1+|\xi-\rho|^2)^{\frac{n+s}{2}}}\\
&\leq  \frac{1}{|x'-y'|^{n+s-2}} \int_{\frac{\psi(x')}{|x'-y'|}}^{\frac{\psi(x')+\eps}{|x'-y'|}} \left[ \int_{\{|\xi-\rho|<1\}} d\rho + \int_{\{|\xi-\rho|>1\}} \frac{d\rho}{|\xi-\rho|^{n+s}}\right] d\xi\\
&= \frac{C\eps}{|x'-y'|^{n+s-1}}.
\end{align*}
Therefore, we have
\begin{equation}\label{eq:I1new}
I_1
	= \int_{B_r'} \int_{(B_r')^c} J_1(x',y')  dy'dx'
	\leq C\eps \int_{B_r'} \int_{(B_r')^c} \frac{dy'dx'}{|x'-y'|^{(n-1)+s}}
	= C \per_{\R^{n-1}}(B_r') \eps = C\eps. 
\end{equation}

By~\eqref{eq:I1I2I3},~\eqref{eq:I2I3new} and~\eqref{eq:I1new}, we have~\eqref{eq:above}.

\medskip
\underline{\textbf{Conclusion}}. We use estimates~\eqref{eq:below} and~\eqref{eq:above} in~\eqref{eq:Rndiffnew} to conclude that
\[
\F^s(\E) - \F^s(\E^\eps) \geq 2|\alpha_0|C_0\eps^{1-s}
		- \sigma_{0,1} C_1 \eps \geq |\alpha_0|C_0 \eps^{1-s}
\]
as long as 
\begin{equation}\label{epsilon0defi}
0 < \eps < \min\left\{\frac12,\frac{R}{2},\left(\frac{|\alpha_0|C_0}{\sigma_{0,1} C_1}\right)^{\frac{1}{s}}\right\} =:\eps_0.\qedhere
\end{equation}
\end{proof}

\begin{remark}
Theorem~\ref{thm:reduce} does not hold in general when $\alpha_0 = 0$. Indeed, we see from formula~\eqref{epsilon0defi} in the proof that $\eps_0 \searrow 0$ as $\alpha_0 \to 0$.
\end{remark}

\section{Uniform estimates for nonlocal area functionals}\label{sec:uniformestimates} 

In this section, we provide uniform estimates for $\per_\Omega^s(A,B)$ and $\per_\Omega^s(E)$ for sufficiently regular sets when $s$ is close to $1$ and consequently obtain Theorem~\ref{thm:uniformintro}. We assume here
that~$n\geq 2$ and provide full details for the case~$n=1$ in Appendix~\ref{appendix:1D}.  

First, we consider the simplest case in which there is no interface between $A$ and $B$. Note that, in contrast to the classical setting, $\per_\Omega^s(A,B) >0$. Nevertheless, as $s\nearrow1^-$, the quantity $(1-s)\per_\Omega^s(A,B)$ converges to $0$, which trivially is the classical area of the interface between~$A$ and~$B$. 

\begin{lemma}\label{lem:separated}
If $A,B \subset \R^n$ are disjoint measurable subsets of $\R^n$ such that, for some $\eps>0$, 
\[
\dist(A\cap \Omega, B) \geq \eps \quad \hbox{and} \quad \dist(A, B \cap \Omega)\geq \eps,
\]
then there is a constant $C$, depending only on $n$ and $\Omega$, such that
\[
\per_\Omega^s(A,B) \leq  \frac{C}{s\eps^s}. 
\]
Consequently,
\[
\lim_{s \nearrow 1} (1-s)\per_\Omega^s(A,B) = 0. 
\]
\end{lemma}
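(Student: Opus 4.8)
The plan is to bound $\per_\Omega^s(A,B)$ directly from the integral expression, exploiting the separation hypothesis to control the singularity of the kernel. Recall from the notation section that
\[
\per^s_\Omega(A,B) = L(A \cap \Omega, B \cap \Omega) + L(A \cap \Omega, B \cap \Omega^c) + L(A\cap \Omega^c, B \cap \Omega),
\]
so it suffices to estimate each of the three terms. In every term, the first argument lies in $\Omega$ in the first two cases and the second argument lies in $\Omega$ in the third; correspondingly, either $x \in A \cap \Omega$ with $\dist(A \cap \Omega, B) \geq \eps$, or $y \in B \cap \Omega$ with $\dist(A, B \cap \Omega) \geq \eps$. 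In all cases we have $|x-y| \geq \eps$ on the domain of integration.

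First I would fix the point lying in $\Omega$ — say $x \in A \cap \Omega$ for the first two terms — and integrate in $y$ over the region $\{|x-y| \geq \eps\}$, which contains the actual domain of integration. Passing to polar coordinates centered at $x$,
\[
\int_{\{|x-y|\geq \eps\}} \frac{dy}{|x-y|^{n+s}} = n\omega_n \int_\eps^\infty \rho^{-1-s}\,d\rho = \frac{n\omega_n}{s\,\eps^s}.
\]
Then I would integrate the remaining variable over $A \cap \Omega \subset \Omega$ (respectively $B \cap \Omega \subset \Omega$ for the third term), picking up a factor $|\Omega|$. This gives each of the three contributions a bound of the form $n\omega_n |\Omega|/(s\eps^s)$, hence $\per_\Omega^s(A,B) \leq 3n\omega_n|\Omega|/(s\eps^s) =: C/(s\eps^s)$ with $C = C(n,\Omega)$.

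For the final assertion, multiply by $(1-s)$ and let $s \nearrow 1$: since $\eps$ is fixed, $\eps^s \to \eps$ and $s \to 1$, so $(1-s) \per_\Omega^s(A,B) \leq (1-s)\,C/(s\eps^s) \to 0$. There is no real obstacle here; the only mild care needed is to make sure each of the three pieces of $\per_\Omega^s(A,B)$ genuinely has one variable constrained to $\Omega$ and the kernel singularity removed by the distance hypothesis — which is exactly why the hypothesis is stated as two separate distance conditions rather than one. (One could equally write the shorter argument: on the whole domain of integration defining $\per_\Omega^s(A,B)$ one has $|x-y|\geq\eps$, and the total domain has measure at most $|\Omega|\cdot|\R^n|$ in the wrong way — so it is cleaner to keep the three-term split and bound the unbounded integral in the non-$\Omega$ variable as above.)
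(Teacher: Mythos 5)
Your argument is correct and follows essentially the same route as the paper: split $\per_\Omega^s(A,B)$ according to which factor lies in $\Omega$, use the separation hypothesis to replace the $y$-integral by $\int_{\{|x-y|\geq\eps\}}|x-y|^{-n-s}\,dy = n\omega_n/(s\eps^s)$, and then integrate the $\Omega$-variable to pick up $|\Omega|$. (The paper bounds $L(A\cap\Omega,B)$ and $L(A,B\cap\Omega)$, getting the slightly sharper constant $2n\omega_n|\Omega|$ rather than your $3n\omega_n|\Omega|$, but that is immaterial.)
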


\begin{proof}
Let~$\omega_n$ denote the measure of the unit ball in $\R^n$. 
We simply estimate
\begin{align*}
L^s(A \cap \Omega, B)
	&\leq \int_{A \cap \Omega} \int_{\{|x-y|\geq \eps\}} \frac{dx \, dy}{|x-y|^{n+s}}
	\leq  \frac{n \omega_{n} |\Omega|}{s \eps^{s}},
\end{align*}
and similarly for $L^s(A , B \cap \Omega)$. 
\end{proof}

Next, we will consider the case when $A$ and $B$ share a sufficiently regular interface. 

Let us begin by establishing a refinement of~\cite{CVcalcvar}*{Theorem 1} which says that, for Lipschitz sets, the $s$-perimeter can be made arbitrarily close to the classical perimeter
as~$s\nearrow1$.

Recall the notation~\eqref{eq:delta-nbhd}. 
For $s \in (0,1]$, define
\[
\nu(n,s) := 2 \left(1 - \frac{1}{2^s}\right) (n-1) \omega_{n-1}\int_0^\infty \frac{t^{n-2}}{(1+t^2)^{(n+s)/2}} dt.
\]
It was observed in~\cite{ADPM}*{Equation 24} that $\nu(n,1) = \omega_{n-1}$. 

\begin{theorem}\label{thm:CV}
Assume that $\Omega \subset \R^n$, with~$n \geq 2$, is a bounded domain. Suppose that there exist~$\mu>0$ and~$M \geq 1$ such that
\begin{equation}\label{tab}
\begin{array}{l}
\hbox{for any}~x,y \in \Omega~\hbox{with}~|x-y| \leq \mu,\\
\hbox{there exists a curve of length less than}~M \mu\\
\hbox{which joins}~x~\hbox{and}~y~\hbox{and lies in}~\Omega. 
\end{array}
\end{equation}
Let~$E \subset \R^n$ be such that $\partial E \cap (\Omega \cup \Omega_\delta)$ is Lipschitz for some $0 < \delta<1$.  
 
Then, for any $\eta>0$, there exists $K_{\eta,\delta}>0$ (possibly depending on the size of $\Omega$, $\mu$, $M$,  and the Lipschitz-norm of $\partial E \cap \Omega$) such that
\[
\bigg| \frac{s(1-s)}{\nu(n,s)}\per_\Omega^s(E)-\per_\Omega(E)\bigg| \leq 2 \per_{\Omega_\delta}(E)+ \eta + (1-s)K_{\eta,\delta}. 
\]
\end{theorem}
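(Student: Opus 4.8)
The plan is to localize the difference between $\per_\Omega^s(E)$ and $\per_\Omega(E)$ by splitting the domain of the double integral defining $\per_\Omega^s(E)$ according to whether the interaction distance $|x-y|$ is small (say $\le \rho$ for a parameter $\rho$ to be chosen) or large. For the far-field part $|x-y|>\rho$, the integrand is bounded by $\rho^{-n-s}$ times the characteristic functions, so that contribution is controlled by $C|\Omega|(\sup|x-y|)\rho^{-n-s}\le C\rho^{-s}$, and after multiplying by $s(1-s)/\nu(n,s)$ it is $O((1-s))$, hence absorbed into the $(1-s)K_{\eta,\delta}$ term once we note $\nu(n,s)\to\omega_{n-1}>0$ as $s\nearrow1$. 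The core of the argument is the near-field part: I would flatten $\partial E$ locally using the Lipschitz graph structure, cover $\partial E\cap\overline\Omega$ by finitely many cylinders in which $\partial E$ is a Lipschitz graph, and in each such cylinder compute the leading-order contribution of $L^s$ restricted to $|x-y|\le\rho$. The model computation is the one for a half-space: if $E=\{x_n<0\}$ then $\iint_{E\times E^c,\,|x-y|\le\rho}|x-y|^{-n-s}\,dx\,dy$ over a unit-area patch of the interface equals $\frac{\nu(n,s)}{s(1-s)}$ up to lower-order corrections, which is precisely why $\nu(n,s)$ is defined the way it is (this is the content of~\cite{CVcalcvar}*{Theorem 1} and~\cite{ADPM}); the Lipschitz perturbation of the flat interface changes this by a factor $1+O(\text{Lip-norm})$ plus an error that is small when the patch is small, and summing over the patches and comparing with the Minkowski-type formula for $\per_\Omega(E)$ produces the $\eta$ error (by choosing the patches fine enough and $\rho$ small enough) together with a genuinely $(1-s)$-order remainder.

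The role of the hypothesis~\eqref{tab} is to guarantee that the geodesic distance inside $\Omega$ is comparable to the Euclidean distance at small scales, so that the localization near $\partial E$ does not lose mass through ``shortcuts'' near $\partial\Omega$; this is what lets one pass from the intrinsic quantity $\per_\Omega^s$ to the patchwise Euclidean model computation with a uniform constant $M$. The collar term $2\per_{\Omega_\delta}(E)$ appears because, near $\partial\Omega$, the Lipschitz regularity of $\partial E$ is only assumed on $\Omega\cup\Omega_\delta$, so one cannot fully control the interaction of points in $\Omega$ near $\partial\Omega$ with points far outside; one estimates that boundary-layer contribution crudely by (twice) the classical perimeter of $E$ in the $\delta$-collar, which is finite by the Lipschitz assumption, and this term does not go to zero with $s$ but is instead an irreducible feature of working in a bounded domain — exactly as in~\cite{CVcalcvar}.

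The main obstacle I anticipate is making the patchwise comparison quantitatively clean: one must show that the near-field $s$-perimeter integral over a cylinder where $\partial E=\{x_n=\varphi(x')\}$, $\varphi$ Lipschitz, equals $\frac{\nu(n,s)}{s(1-s)}\int|\nabla\varphi|$-type area up to an error of the form $\eta\cdot(\text{area})+(1-s)K$, uniformly for $s$ bounded away from $0$. This requires care with the change of variables straightening the graph (the Jacobian and the distortion of the kernel must be tracked), with the overlap of the finitely many cylinders (a partition-of-unity or inclusion–exclusion bookkeeping), and with the fact that the constant in front, $s(1-s)/\nu(n,s)$, must be handled simultaneously — here one uses $\nu(n,1)=\omega_{n-1}$ and continuity of $\nu(n,\cdot)$ to keep the normalization under control. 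I would organize the estimate as: (i) reduce to $|x-y|\le\rho$; (ii) reduce to a neighborhood of $\partial E$; (iii) cover by graph cylinders and straighten; (iv) invoke the flat-interface asymptotics with the Lipschitz error bound; (v) reassemble and identify the limit with $\per_\Omega(E)$ modulo the collar. Steps (iv) is where the real work lies, but it is essentially a refinement of the computations already carried out in~\cite{CVcalcvar} and~\cite{ADPM}, so I would cite those for the flat model and concentrate the new effort on tracking the $\eta$ and $(1-s)K_{\eta,\delta}$ dependence explicitly.
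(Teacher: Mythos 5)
Your proposal and the paper take genuinely different routes. The paper does \emph{not} re-derive the flat-interface asymptotics or use a near/far kernel split; instead it first decomposes $\per^s_\Omega(E)$ into the interior interaction $L^s(E\cap\Omega,E^c\cap\Omega)$ plus the two cross terms $L^s(E\cap\Omega,E^c\cap\Omega^c)$ and $L^s(E^c\cap\Omega,E\cap\Omega^c)$, and then quotes \cite{CVcalcvar}*{Lemma 11} (noting it works verbatim under Lipschitz regularity) as a black box: applied on $\Omega$ it handles the interior term and produces $\per_\Omega(E)+\tfrac{\eta}{3}+(1-s)K_1$, and applied on the collar $\Omega_\delta$ it handles the near-boundary part of each cross term and produces $\per_{\Omega_\delta}(E)+\tfrac{\eta}{3}+(1-s)K_2$. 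The interactions at distance $\geq\delta$ from $\partial\Omega$ are bounded by a $\delta$-dependent constant $C_1(\delta)$, hence absorbed into $(1-s)K_{\eta,\delta}$. The factor $2$ is nothing more than the two symmetric cross terms. This is much shorter than your cylinder-covering re-derivation, which would essentially re-prove \cite{CVcalcvar}*{Lemma 11}.

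There is also a genuine conceptual gap in your explanation of the collar term. You describe $2\per_{\Omega_\delta}(E)$ as a ``crude'' bound for a boundary layer that ``one cannot fully control'' because regularity of $\partial E$ is only available on $\Omega\cup\Omega_\delta$. That is not the mechanism. Regularity \emph{is} available there (that is exactly what the hypothesis provides), and the cross terms are controlled quite precisely: as $s\nearrow 1$, each cross term genuinely converges (after renormalization) to $\per_{\Omega_\delta}(E)$, because all the surviving mass of $L^s(E\cap\Omega,E^c\cap\Omega^c)$ concentrates in $\Omega_\delta\times\Omega_\delta$ and there one applies the same Lemma~11. So the collar term is a sharp limiting value of the cross interactions, not a lossy estimate. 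This matters: if it were a crude bound of the kind you describe, you would have no reason to expect the constant $2$ and no reason for the term to \emph{vanish} under the transversality assumption $\per_{\Omega_\delta}(E)\to 0$, which is exactly what Remark~\ref{rem:uniform} and the $\Gamma$-$\limsup$ construction later rely on. Similarly, hypothesis~\eqref{tab} is not primarily a geodesic-vs-Euclidean comparison at the level of the near-field patching; it is simply the standing assumption of \cite{CVcalcvar}*{Lemma 11}, which is being cited rather than re-proved. Your approach (redoing the graph-cylinder computation and tracking $\eta$, $(1-s)K$ by hand) could in principle be made to work, but as written it would need a substantial rewrite of the near-field analysis and, more importantly, a correct identification of the cross-term asymptotics to get the $2\per_{\Omega_\delta}(E)$ term.
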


\begin{proof}
Fix $\eta>0$. 
By~\cite[Lemma 11]{CVcalcvar} (the proof of which holds for Lipschitz regularity in place of $C^{1,\alpha}$), there is a $K_1(\eta)$ such that
\begin{equation}\label{eq:lem11}
\bigg| \frac{s(1-s)}{\nu(n,s)}L^s(E \cap \Omega, E^c \cap \Omega)-\per_\Omega(E)\bigg| \leq \frac{\eta}{3} + (1-s)K_1(\eta). 
\end{equation}
It remains to check the cross terms. 
For this, we follow the proof of~\cite{CVcalcvar}*{Lemma 12}. 

Let~$R >1$ be such that $\Omega \Subset B_R \subset \R^n$.
We estimate
\begin{align*}
&\int_{(E \cap \Omega) \setminus \Omega_\delta} \int_{E^c \cap \Omega^c} \frac{dxdy}{|x-y|^{n+s}} 
\leq \int_{(E \cap \Omega) \setminus \Omega_\delta} \int_{E^c \cap  (B_{2R} \setminus \Omega)} \frac{dxdy}{|x-y|^{n+s}} 
		+ \int_{B_R} \int_{(B_{2R})^c} \frac{dxdy}{|x-y|^{n+s}} \\
&\qquad\leq \delta^{-n-1} |\Omega||B_{2R}|
		+ \int_{B_R} \int_{(B_{2R})^c} \frac{dxdy}{(|y|/2)^{n+s}}\leq C_1(\delta),
\end{align*}
for some constant $C_1(\delta)>0$, and similarly
\begin{align*}
&\int_{E \cap \Omega} \int_{(E^c \cap \Omega^c) \setminus \Omega_\delta} \frac{dxdy}{|x-y|^{n+s}} 
\leq \int_{E \cap \Omega} \int_{(E^c \cap (B_{2R} \setminus \Omega)) \setminus \Omega_\delta} \frac{dxdy}{|x-y|^{n+s}}
		+\int_{B_R} \int_{(B_{2R})^c} \frac{dxdy}{|x-y|^{n+s}} \\
&\qquad\leq \delta^{-n-1} |\Omega| | B_{2R}| +\int_{B_R} \int_{(B_{2R})^c} \frac{dxdy}{(|y|/2)^{n+s}} \leq C_1(\delta).
\end{align*}

On the other hand, by~\cite{CVcalcvar}*{Lemma 11}, there is a $K_2(\eta,\delta)>0$ such that 
\[
\left|\frac{s(1-s)}{\nu(n,s)} \int_{E \cap \Omega_\delta} \int_{E^c \cap \Omega_\delta}\frac{dxdy}{|x-y|^{n+s}}  - \per_{\Omega_\delta}(E)\right| \leq \frac{\eta}{3} + (1-s) K_2(\eta,\delta).
\]
Together, we find
\begin{equation}\label{eq:cross1}
\begin{aligned}
 \frac{s(1-s)}{\nu(n,s)} L^s(E \cap \Omega, E^c \cap \Omega^c)
		&\leq \frac{s(1-s)}{\nu(n,s)} \int_{E \cap \Omega_\delta} \int_{E^c \cap \Omega_\delta}\frac{dxdy}{|x-y|^{n+s}} +  \frac{s(1-s)}{\nu(n,s)}  2C_1(\delta)\\
		&\leq  \per_{\Omega_\delta}(E) + \frac{\eta}{3} + (1-s) K_2(\eta,\delta)+ \frac{s(1-s)}{\nu(n,s)}  2C_1(\delta)\\
		&\leq  \per_{\Omega_\delta}(E) + \frac{\eta}{3} + (1-s) K_3(\eta,\delta)
\end{aligned}
\end{equation}
for some $K_3(\eta,\delta)>0$. 
Replacing $E$ with $E^c$,  we similarly find
\begin{equation}\label{eq:cross2}
 \frac{s(1-s)}{\nu(n,s)} L^s(E^c \cap \Omega, E \cap \Omega^c)
		\leq  \per_{\Omega_\delta}(E) + \frac{\eta}{3} + (1-s) K_3(\eta,\delta).
\end{equation}
Therefore, by~\eqref{eq:lem11},~\eqref{eq:cross1}, and~\eqref{eq:cross2}, we have
\begin{align*}
\bigg| \frac{s(1-s)}{\nu(n,s)}\per_\Omega^s(E)-\per_\Omega(E)\bigg| \leq 
2 \per_{\Omega_\delta}(E) + \eta + (1-s)(K_1(\eta) + 2K_3(\eta,\delta)),
\end{align*}
and the result holds. 
\end{proof}

We obtain a similar result for $\per_{\Omega}^s(A,B)$ when both interfaces $\partial A \cap \partial B$ and $\partial (A \cup B)$ are sufficiently regular. 

\begin{theorem}\label{thm:CVAB}
Assume that $\Omega \subset \R^n$, with~$n \geq 2$, is a bounded domain with Lipschitz boundary. 
Let~$A,B \subset \R^n$ be disjoint subsets of $\R^n$ such that the following conditions hold:
\begin{enumerate}
\item there exists $\mu>0$ and $M \geq 1$ such that
\begin{center}
\begin{tabular}{l}
for any $x,y \in (A \cup B) \cap \Omega$ with $|x-y| \leq \mu$,\\
there exists a curve of length less than $M \mu$\\
which joins $x$ and $y$ and lies in $ (A \cup B) \cap \Omega$. 
\end{tabular}
\end{center}
\item $\partial A \cap \partial B \cap (\Omega \cup \Omega_\delta)$ is Lipschitz for some $0 < \delta<1$.
\end{enumerate} 
Then, for any $\eta>0$, there exists $K_{\eta,\delta}>0$ (possibly depending on the size of $\Omega$, $\mu$, $M$,  and the $C^{0,1}$-norm of $\partial A \cap \partial B \cap \Omega$) such that
\[
\bigg| \frac{s(1-s)}{\nu(n,s)}\per_\Omega^s(A,B)-\per_\Omega(A,B)\bigg| \leq 2 \per_{\Omega_\delta}(A,B)+ \eta + (1-s)K_{\eta,\delta}.
\]
\end{theorem}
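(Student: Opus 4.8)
The plan is to mirror the proof of Theorem~\ref{thm:CV}. I would start from the decomposition
\[
\per_\Omega^s(A,B) = L^s(A\cap\Omega, B\cap\Omega) + L^s(A\cap\Omega, B\cap\Omega^c) + L^s(A\cap\Omega^c, B\cap\Omega)
\]
together with the fact, via Lemma~\ref{lem:JAB} and Remark~\ref{rem:JAB1}, that classically $\per_\Omega(A,B)$ is the area of the interface $\partial A\cap\partial B$ inside $\Omega$. Exactly as in Theorem~\ref{thm:CV}, the two cross terms containing $\Omega^c$ need only be bounded from above, their lower bounds being trivially nonnegative: one splits $B\cap\Omega^c$ into its part inside a large ball $B_{2R}\supset\overline{\Omega}$ and the far field (whose interaction with $A\cap\Omega$ is a convergent integral, uniformly bounded in $s$), and splits $A\cap\Omega$ into the part outside $\Omega_\delta$ (at distance $\gtrsim\delta$ from $B\cap\Omega^c$, so with bounded interaction) and the part inside $\Omega_\delta$; the two bounded pieces vanish after multiplication by $1-s$, while the remaining interaction $L^s(A\cap\Omega_\delta,B\cap\Omega_\delta)$ is controlled by the interior estimate below applied on $\Omega_\delta$, producing the term $\per_{\Omega_\delta}(A,B)$ up to $\eta$- and $(1-s)$-errors. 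Interchanging $A$ and $B$ handles the second cross term.

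The heart of the matter is the interior term, for which I would prove
\[
\Bigl| \frac{s(1-s)}{\nu(n,s)}\, L^s(A\cap\Omega, B\cap\Omega) - \per_\Omega(A,B) \Bigr| \le \eta + (1-s)K_\eta ,
\]
by localizing the $A$--$B$ interaction near $\Gamma:=\partial A\cap\partial B$. This is where condition~(1) enters: if $x\in A\cap\Omega$, $y\in B\cap\Omega$ and $|x-y|\le\mu$, then the short curve joining them inside $(A\cup B)\cap\Omega$ passes from $A$ to $B$ without entering the leftover region $E:=\R^n\setminus(A\cup B)$, hence crosses $\Gamma$ at a point within distance $M\mu$ of both $x$ and $y$ (up to the harmless case in which the curve reaches $\partial\Omega$, which confines $x$ to a neighborhood of $\partial\Omega$ and is absorbed in the $\Omega_\delta$ contribution). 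Thus the part of $L^s(A\cap\Omega,B\cap\Omega)$ with $|x-y|>\mu$ is bounded by $C/(s\mu^s)$ with $C=C(n,\Omega)$, exactly as in Lemma~\ref{lem:separated}, while the part with $|x-y|\le\mu$ is supported, in both variables, in an $M\mu$-neighborhood of $\Gamma\cup\Omega_\delta$. One then covers $\Gamma\cap\Omega$ by finitely many cylinders in which, after a rotation, $\Gamma$ is the graph of a Lipschitz function and $A$, $B$ fill (up to measure zero) its two sides; on each such cylinder the $A$--$B$ interaction reduces to a set/complement interaction, so \cite{CVcalcvar}*{Lemma 11} applies (its proof working under Lipschitz regularity, as already used above), and summing the local contributions and multiplying by $s(1-s)/\nu(n,s)$ gives $\per_\Omega(A,B)$ with the stated error.

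The main obstacle is the geometric step just invoked: showing that, $\mathcal{H}^{n-1}$-almost everywhere on $\Gamma$, the region $E$ does not intrude between $A$ and $B$, so that locally $A$ and $B$ really are the two sides of the Lipschitz graph and \cite{CVcalcvar}*{Lemma 11} can be applied verbatim. This is precisely where the connectivity hypothesis~(1) on $A\cup B$ is indispensable --- without it one could run thin fingers of $E$ along $\Gamma$, invisible to $\per_\Omega(A,B)$ but altering the nonlocal interaction --- and it is also where one must squeeze $L^s(A\cap U,B\cap U)$, on each local cylinder $U$, between the set/complement interactions $L^s(A\cap U, A^c\cap U)$ and $L^s(B^c\cap U, B\cap U)$ and show that the defect $L^s(A\cap U, E\cap U)$ is of lower order. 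Once this local reduction is in place, the remaining bookkeeping --- splitting $\eta$ into thirds, fixing $\mu$, and collecting all $(1-s)$-errors into a single constant $K_{\eta,\delta}$ --- is routine and parallels the end of the proof of Theorem~\ref{thm:CV}.
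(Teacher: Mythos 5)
Your treatment of the cross terms and the final bookkeeping is fine, but the interior estimate is where you depart from the paper and where the argument has a genuine gap. You propose covering $\Gamma=\partial A\cap\partial B$ by local cylinders $U$ and reducing $L^s(A\cap U,B\cap U)$ to a set/complement interaction modulo a defect $L^s(A\cap U, E\cap U)$, with $E=\R^n\setminus(A\cup B)$; you correctly flag this defect as the crux but you do not prove that it is lower order, only say it must be shown. That claim is not immediate: hypothesis~(1) does not make $E$ small or absent near $\Gamma$, and if $E$ shares a Lipschitz interface with $A$ inside $U$ then $(1-s)L^s(A\cap U,E\cap U)$ has a genuine positive limit as $s\nearrow1$, so the defect is \emph{not} negligible in general. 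Your cylinder-covering scheme therefore does not close unless you additionally separate $U$ from $E$, and you have not indicated how hypothesis~(1) delivers that.

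The paper avoids the issue entirely with a domain-substitution trick rather than a covering argument: set $\Omega':=(A\cup B)\cap\Omega$ and apply \cite{CVcalcvar}*{Lemma 11} with $\Omega'$ in place of $\Omega$. Since $A,B$ are disjoint, $A\cap\Omega'=A\cap\Omega$ and $A^c\cap\Omega'=B\cap\Omega$ up to measure zero, so $L^s(A\cap\Omega',A^c\cap\Omega')=L^s(A\cap\Omega,B\cap\Omega)$ and $\per_{\Omega'}(A)=\per_\Omega(A,B)$ \emph{exactly}, with no defect term to estimate. Hypothesis~(1) is then read off literally as the joinability condition~\eqref{tab} for $\Omega'$, and hypothesis~(2) provides the Lipschitz regularity of $\partial A\cap\Omega'$, so the interior estimate~\eqref{eq:local-int} follows in one line; the cross terms are then handled exactly as in Theorem~\ref{thm:CV}, as you describe. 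In short, the hypothesis you regarded as geometric scaffolding for a localization procedure is in fact precisely the hypothesis of \cite{CVcalcvar}*{Lemma 11} for the modified domain, and using it that way removes the step your proposal leaves open.
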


\begin{proof}
Let~$\Omega' := (A \cup B) \cap \Omega$. By~\cite[Lemma 11]{CVcalcvar} (with $\Omega'$ in place of $\Omega$ and Lipschitz regularity
in place of $C^{1,\alpha}$), there exists a $K_1(\eta)>0$ such that
\[
\bigg| \frac{s(1-s)}{\nu(n,s)}L^s(A \cap \Omega', A^c \cap \Omega')-\per_{\Omega'}(A)\bigg| \leq \frac{\eta}{3} + (1-s)K_1(\eta),
\]
which can be written equivalently as
\begin{equation}\label{eq:local-int}
\bigg| \frac{s(1-s)}{\nu(n,s)}L^s(A \cap \Omega, B \cap \Omega)-\per_{\Omega}(A,B)\bigg| \leq \frac{\eta}{3} + (1-s)K_1(\eta). 
\end{equation}
It remains to check the cross terms. 

The proof follows along the same lines as that of Theorem~\ref{thm:CV}. Indeed, first, we can find~$C_1(\delta)>0$ such that
\begin{align*}
\int_{(A \cap \Omega) \setminus \Omega_\delta} \int_{B \cap \Omega^c} \frac{dxdy}{|x-y|^{n+s}} 
		\leq C_1(\delta) \quad
\hbox{and} \quad
\int_{A \cap \Omega} \int_{(B \cap \Omega^c) \setminus \Omega_\delta} \frac{dxdy}{|x-y|^{n+s}} 
	 \leq C_1(\delta).
\end{align*} 
Then, as in~\eqref{eq:local-int} with $\Omega_\delta$ in place of $\Omega$, there is a $K_2(\eta,\delta)>0$ such that 
\[
\left|\frac{s(1-s)}{\nu(n,s)} \int_{A \cap \Omega_\delta} \int_{B \cap \Omega_\delta}\frac{dxdy}{|x-y|^{n+s}}  - \per_{\Omega_\delta}(A,B)\right| \leq \frac{\eta}{3} + (1-s) K_2(\eta,\delta).
\]
Together, we get
\begin{equation}\label{eq:cross1AB}
\begin{aligned}
 \frac{s(1-s)}{\nu(n,s)} L^s(A \cap \Omega, B \cap \Omega^c)
		&\leq \frac{s(1-s)}{\nu(n,s)} \int_{A \cap \Omega_\delta} \int_{B \cap \Omega_\delta}\frac{dxdy}{|x-y|^{n+s}} +  \frac{s(1-s)}{\nu(n,s)}  2C_1(\delta)\\
		&\leq  \per_{\Omega_\delta}(A,B) + \frac{\eta}{3} + (1-s) K_3(\eta,\delta)
\end{aligned}
\end{equation}
for some $K_3(\eta,\delta)>0$. 

Interchanging the roles of $A$ and $B$,  we similarly find
\begin{equation}\label{eq:cross2AB}
 \frac{s(1-s)}{\nu(n,s)} L^s(A \cap \Omega^c, B \cap \Omega)
		\leq   \per_{\Omega_\delta}(A,B) + \frac{\eta}{3} + (1-s) K_3(\eta,\delta)
\end{equation}
Therefore, by~\eqref{eq:local-int},~\eqref{eq:cross1AB}, and~\eqref{eq:cross2AB}, we have
\begin{align*}
\bigg| \frac{s(1-s)}{\nu(n,s)}\per_\Omega^s(E)-\per_\Omega(E)\bigg| \leq 
2 \per_{\Omega_\delta}(A,B) + \eta + (1-s)(K_1(\eta) + 3K_3(\eta,\delta)),
\end{align*}
and the result holds. 
\end{proof}

Since $\sigma_{i,j}>0$, we obtain the corresponding result for $\F^s(\E)$ which then implies Theorem~\ref{thm:uniformintro}. 

\begin{corollary}\label{cor:uniform} 
Assume that $\Omega \subset \R^n$, with~$n \geq 2$, is a bounded domain with Lipschitz boundary. Let~$\E$ be admissible such that $\partial E_i \cap (\Omega \cup \Omega_\delta)$ is Lipschitz for some $\delta>0$ and each $-1 \leq i \leq 1$. 

Then, for any $\eta>0$, there exists $K_{\eta,\delta}>0$ (possibly depending on the size of $\Omega$, the Lipschitz norms of the interfaces, and $\sigma_{i,j}$) such that
\[
\bigg| \frac{s(1-s)}{\nu(n,s)}\F^s(\E,\Omega)-\F^1(\E,\Omega)\bigg| \leq 2\F^1(\E, \Omega_\delta) 
+ \eta + (1-s)K_{\eta,\delta}.
\]
\end{corollary}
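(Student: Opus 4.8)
The plan is to reduce Corollary~\ref{cor:uniform} directly to Theorem~\ref{thm:CVAB} applied to each pair of phases. Recall from \eqref{eq:sigma-energy} and \eqref{eq:Fomega} that $\F^s(\E,\Omega) = \sum_{-1\le i<j\le1}\sigma_{i,j}\per^s_\Omega(E_i,E_j)$ and $\F^1(\E,\Omega)=\sum_{-1\le i<j\le1}\sigma_{i,j}\per_\Omega(E_i,E_j)$. So the quantity we must bound is
\[
\Bigl|\frac{s(1-s)}{\nu(n,s)}\F^s(\E,\Omega)-\F^1(\E,\Omega)\Bigr|
\le \sum_{-1\le i<j\le1}\sigma_{i,j}\Bigl|\frac{s(1-s)}{\nu(n,s)}\per^s_\Omega(E_i,E_j)-\per_\Omega(E_i,E_j)\Bigr|,
\]
by the triangle inequality for absolute values. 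The strategy is then to apply Theorem~\ref{thm:CVAB} to each of the three terms with $A=E_i$, $B=E_j$, sum up, and collect constants.

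First I would verify that for each pair $i\neq j$ the hypotheses of Theorem~\ref{thm:CVAB} are met. Hypothesis (2) there requires $\partial E_i\cap\partial E_j\cap(\Omega\cup\Omega_\delta)$ to be Lipschitz; since $\partial E_i\cap\partial E_j\subset\partial E_i$ and $\partial E_i\cap(\Omega\cup\Omega_\delta)$ is Lipschitz by assumption, the relevant portion of the interface inherits Lipschitz regularity (it is a relatively open subset of a Lipschitz hypersurface, up to $\mathcal H^{n-1}$-null sets). Hypothesis (1), the quasiconvexity/chord-arc condition on $(E_i\cup E_j)\cap\Omega$, follows since $\Omega$ has Lipschitz boundary and the $E_i$ are Lipschitz sets, so $(E_i\cup E_j)\cap\Omega$ is a Lipschitz domain and the curve-connecting property \eqref{tab} holds for suitable $\mu,M$ depending on $\Omega$ and the interface norms. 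This gives, for each pair and each $\eta>0$, a constant $K^{(i,j)}_{\eta,\delta}>0$ with
\[
\Bigl|\frac{s(1-s)}{\nu(n,s)}\per^s_\Omega(E_i,E_j)-\per_\Omega(E_i,E_j)\Bigr|\le 2\per_{\Omega_\delta}(E_i,E_j)+\frac{\eta}{3\max_{k,\ell}\sigma_{k,\ell}}+(1-s)K^{(i,j)}_{\eta,\delta},
\]
where I have applied the theorem with its error parameter replaced by $\eta/(3\max\sigma)$ to absorb the $\sigma_{i,j}$ weights cleanly.

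Multiplying by $\sigma_{i,j}$ and summing over the three pairs then yields
\[
\Bigl|\frac{s(1-s)}{\nu(n,s)}\F^s(\E,\Omega)-\F^1(\E,\Omega)\Bigr|
\le 2\sum_{i<j}\sigma_{i,j}\per_{\Omega_\delta}(E_i,E_j)+\eta+(1-s)\sum_{i<j}\sigma_{i,j}K^{(i,j)}_{\eta,\delta},
\]
and recognizing $\sum_{i<j}\sigma_{i,j}\per_{\Omega_\delta}(E_i,E_j)=\F^1(\E,\Omega_\delta)$ and setting $K_{\eta,\delta}:=\sum_{i<j}\sigma_{i,j}K^{(i,j)}_{\eta,\delta}$ gives exactly the claimed inequality. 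There is really no serious obstacle here: the content is entirely in Theorem~\ref{thm:CVAB}, and the only point requiring a word of care is the bookkeeping of the $\eta$ and the constants, together with checking that the Lipschitz hypothesis on each full interface $\partial E_i$ transfers to the pairwise interfaces $\partial E_i\cap\partial E_j$ so that Theorem~\ref{thm:CVAB}(2) applies. The case $n=1$ is handled separately in Appendix~\ref{appendix:1D}, as noted.
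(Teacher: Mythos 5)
Your proof is correct and takes the same route the paper (implicitly) intends: the paper states only that ``since $\sigma_{i,j}>0$, we obtain the corresponding result for $\F^s(\E)$,'' i.e.\ apply Theorem~\ref{thm:CVAB} to each pair $(E_i,E_j)$, sum against the positive weights $\sigma_{i,j}$, and collect constants. Your bookkeeping with the error parameter $\eta/(3\max\sigma_{k,\ell})$ is fine, and the verification of hypotheses (1) and (2) of Theorem~\ref{thm:CVAB} is the right thing to say; the only very minor inaccuracy is describing $\partial E_i\cap\partial E_j$ as ``relatively open'' in $\partial E_i$ --- it is in general a relatively closed subset, but since it is still a portion of a Lipschitz graph (away from the $\mathcal H^{n-1}$-negligible triple junction), the Lipschitz hypothesis on the pairwise interface is indeed inherited as you claim.
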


\begin{remark}
In Theorem~\ref{thm:CVAB}, Corollary~\ref{cor:uniform}, and Theorem~\ref{thm:uniformintro}, the Lipschitz assumption on $\Omega$ can be replaced by~\eqref{tab}. 
\end{remark}

\section{$\Gamma$-convergence}\label{sec:gamma}

This section contains the proofs of Theorems~\ref{thm:s-gamma} and~\ref{thm:localmin}. 

It will be helpful to precisely state the main result from~\cite{ADPM} for reference. 

\begin{theorem}[Theorem 2 in~\cite{ADPM}]\label{thm:ADPM}
The functional $(1-s) \per_\Omega^s$ $\Gamma$-converges to $\omega_{n-1}\per_\Omega$. In particular, if $s_k \nearrow 1$ as $k \to \infty$, then the following holds.  
\begin{enumerate}
\item If $\chi_{E^k} \to \chi_E$ in $L^1_{\text{loc}}(\R^n)$, then 
\[
\liminf_{k \to \infty} (1-s_k)L^{s_k}(E^k \cap \Omega,(E^k)^c \cap \Omega)  \geq \omega_{n-1} \per_\Omega(E).
\]
\item Given a measurable set $E$, there exists $E^k$ such that $\chi_{E^k} \to \chi_E$ in $L^1_{\text{loc}}(\R^n)$ and
\[
\limsup_{k \to \infty} (1-s_k) \per_\Omega^{s_k}(E^k) \leq \omega_{n-1} \per_\Omega(E).
\]
\end{enumerate}
\end{theorem}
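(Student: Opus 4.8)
The plan is to establish the two halves separately: the $\Gamma$-liminf inequality~(1) and the construction of a recovery sequence~(2).

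For~(1) I would start from the elementary identity
\[
2\,L^{s}(E^k\cap\Omega,(E^k)^c\cap\Omega)=\iint_{\Omega\times\Omega}\frac{|\chi_{E^k}(x)-\chi_{E^k}(y)|}{|x-y|^{n+s}}\,dx\,dy,
\]
which recasts~(1) as the $L^1_{\mathrm{loc}}$-lower semicontinuity, with the $(1-s)$ normalization, of the Gagliardo-type seminorm of $\chi_{E^k}$ over $\Omega\times\Omega$. The mechanism is duality against vector fields: for $\varphi\in C^1_c(\Omega;\R^n)$ with $\|\varphi\|_{L^\infty(\Omega)}\le1$, the pointwise bound $|\chi_{E^k}(x)-\chi_{E^k}(y)|\ge (\chi_{E^k}(x)-\chi_{E^k}(y))\,\frac{x-y}{|x-y|}\cdot\frac{\varphi(x)+\varphi(y)}{2}$, after multiplication by $(1-s_k)|x-y|^{-n-s_k}$, integration over $\Omega\times\Omega$ and symmetrization in $x\leftrightarrow y$, yields
\[
(1-s_k)\iint_{\Omega\times\Omega}\frac{|\chi_{E^k}(x)-\chi_{E^k}(y)|}{|x-y|^{n+s_k}}\,dx\,dy \ge (1-s_k)\iint_{\Omega\times\Omega}\chi_{E^k}(x)\,\big(\varphi(x)+\varphi(y)\big)\cdot\frac{x-y}{|x-y|^{n+1+s_k}}\,dx\,dy.
\]
As $s_k\nearrow1$ the kernel $(1-s_k)\,\dfrac{x-y}{|x-y|^{n+1+s_k}}$ concentrates on the diagonal, so that, tested against the smooth field $\varphi$, it acts in the limit as a fixed multiple of the gradient; since $\chi_{E^k}\to\chi_E$ in $L^1_{\mathrm{loc}}(\R^n)$, the right-hand side converges to $c_n\int_\Omega\chi_E\,\operatorname{div}\varphi\,dx$. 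Taking the supremum over admissible $\varphi$ and invoking the duality formula $\per_\Omega(E)=\sup\{\int_\Omega\chi_E\,\operatorname{div}\varphi:\varphi\in C^1_c(\Omega;\R^n),\ \|\varphi\|_{L^\infty(\Omega)}\le1\}$ then gives $\liminf_k(1-s_k)L^{s_k}(E^k\cap\Omega,(E^k)^c\cap\Omega)\ge\omega_{n-1}\per_\Omega(E)$, where the value $c_n$ of the dimensional constant is identified by testing on a half-space (consistently with $\nu(n,1)=\omega_{n-1}$).

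For~(2), if $\per_\Omega(E)=+\infty$ there is nothing to prove, so assume $\per_\Omega(E)<\infty$. I would first handle sets $E$ whose boundary is Lipschitz near $\overline\Omega$ with $\mathcal H^{n-1}(\partial E\cap\partial\Omega)=0$, so that $\per_{\Omega_\delta}(E)\to0$ as $\delta\searrow0$; for these, taking $E^k:=E$ and applying Theorem~\ref{thm:CV} (for fixed $\eta,\delta$ the $(1-s_k)K_{\eta,\delta}$ term disappears as $s_k\nearrow1$, then send $\delta\searrow0$, $\eta\searrow0$, and use $\nu(n,s_k)\to\nu(n,1)=\omega_{n-1}$) yields the pointwise limit $\lim_k(1-s_k)\per_\Omega^{s_k}(E)=\omega_{n-1}\per_\Omega(E)$. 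For a general $E$ of finite perimeter, I would approximate it in $L^1(\Omega)$ by smooth sets $E_j$ with $\per_\Omega(E_j)\to\per_\Omega(E)$ (mollification of $\chi_E$ together with the coarea formula and Sard's theorem), apply the previous step to each $E_j$, and then run a diagonal argument, using that the $\Gamma$-$\limsup$ functional is automatically $L^1_{\mathrm{loc}}$-lower semicontinuous and hence dominated by the lower semicontinuous envelope of $\omega_{n-1}\per_\Omega$ on smooth sets, which is $\omega_{n-1}\per_\Omega$ itself.

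I expect the main obstacle to be the liminf inequality, and specifically the simultaneous passage to the limit in $k$ (so that $\chi_{E^k}\to\chi_E$) and in $s_k\nearrow1$ (so that the kernel concentrates): one must control the off-diagonal tail of the kernel uniformly in $k$ and pin down the constant $\omega_{n-1}$. This is exactly the content of the Bourgain--Brezis--Mironescu and D\'avila asymptotics (see~\cite{Brezis}) and of Ponce's $\Gamma$-convergence refinement, and in practice I would invoke these directly. By contrast, once Theorem~\ref{thm:CV} is in hand the recovery-sequence half is essentially soft, its only real subtlety being the boundary term $\per_{\Omega_\delta}(E)$, eliminated by the $\delta\searrow0$ limit, together with the standard density of smooth sets in $BV$ with convergence of perimeters.
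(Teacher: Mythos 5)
The paper does not prove this statement: Theorem~\ref{thm:ADPM} is quoted verbatim as Theorem~2 of \cite{ADPM}, and the authors simply cite it. So the right comparison is between your sketch and the external proof in the literature (ADPM, Davila, BBM, Ponce), not against anything in this paper.

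Your recovery-sequence sketch for~(2) is essentially sound and matches the strategy used elsewhere in the paper (smooth approximation in $BV$ with vanishing boundary perimeter, Theorem~\ref{thm:CV} for the pointwise limit, then a diagonal argument and the lower semicontinuity of the $\Gamma$-$\limsup$). One should take care that the approximants satisfy $\mathcal H^{n-1}(\partial E_j\cap\partial\Omega)=0$ so that $\per_{\Omega_\delta}(E_j)\to0$, exactly as in the paper's Proposition~\ref{prop:s-limsup} via \cite{Modica}.

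There is, however, a genuine gap in your argument for~(1). The vector-field duality you describe does not recover the sharp constant $\omega_{n-1}$. After symmetrisation, the right-hand side of your inequality is
\[
\frac{1-s_k}{2}\iint_{\Omega\times\Omega}\chi_{E^k}(x)\,\frac{(x-y)\cdot(\varphi(x)+\varphi(y))}{|x-y|^{n+1+s_k}}\,dx\,dy,
\]
and the concentration of the odd kernel $(1-s)\,z/|z|^{n+1+s}$ produces the tensor
\[
(1-s)\int_{B_R}\frac{z_iz_j}{|z|^{n+1+s}}\,dz\ \longrightarrow\ \omega_n\,\delta_{ij},
\]
not $2\omega_{n-1}\delta_{ij}$. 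Chasing the factor of $\tfrac12$ through, the supremum over $\varphi\in C^1_c(\Omega;\R^n)$ with $\|\varphi\|_\infty\le1$ only yields
\[
\liminf_{k\to\infty}(1-s_k)\,L^{s_k}(E^k\cap\Omega,(E^k)^c\cap\Omega)\ \geq\ \frac{\omega_n}{2}\,\per_\Omega(E),
\]
and $\omega_n/2<\omega_{n-1}$ strictly for every $n\geq2$ (equality only when $n=1$). The loss is intrinsic to the pointwise bound: $\big|\tfrac{x-y}{|x-y|}\cdot\tfrac{\varphi(x)+\varphi(y)}{2}\big|$ cannot equal $1$ for directions $x-y$ that are not normal to the interface, so the duality inequality is not saturated. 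Getting the optimal constant $\omega_{n-1}$ requires a different mechanism --- the one-dimensional slicing used by D\'avila, or the blow-up argument of \cite{ADPM} --- which is exactly the content of the ``asymptotics'' you then say you would invoke. You are right that this is the crux, but the sketch as written asserts the sharp constant comes out of duality plus testing on a half-space, and that assertion is false; what actually comes out is $\omega_n/2$. As far as this paper is concerned, though, citing \cite{ADPM} (as the authors do) is all that is required.
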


Assume that $\alpha_0\leq 0$. 
One can check using Remark~\ref{rem:JAB1} and~\eqref{eq:alpha} that  
\begin{equation}\label{eq:F*2}
\F^*(\E) 
	= \sum_{-1 \leq i \leq 1} \alpha_i^* \per_\Omega (E_i)
	=\sum_{i \in \{-1,1\}} \alpha_i^* \per_\Omega (E_i)
\end{equation}
where 
\[
\alpha_0^* = 0 \quad \hbox{and}\quad \alpha_{i}^* = \alpha_{i} + \alpha_0 = \sigma_{i,0} >0\quad \hbox{for}~i \in  \{-1,1\}. 
\]
Similarly, by Lemma~\ref{lem:JAB}, for $s \in (0,1)$ we have that 
\begin{equation}\label{eq:Fs*}
\begin{aligned}
\F^{s}(\E)
	&= \sum_{i \in \{-1,1\}} \alpha_i \per_\Omega^{s}(E_i)
		+\alpha_0 \per_\Omega^{s}(E_0)\\
	&=  \sum_{i \in \{-1,1\}} (\alpha_i+\alpha_0) \per_\Omega^s(E_i)
		+\alpha_0 [\per_\Omega^{s}(E_0)-\per_\Omega^s(E_1)-\per_\Omega^{s}(E_{-1})]\\
	&= \sum_{i \in \{-1,1\}} \alpha_i^* \per_\Omega^{s}(E_i)
		-2\alpha_0 \per_\Omega^{s}(E_{-1}, E_1).
\end{aligned}
\end{equation}

In light of Theorem~\ref{thm:ADPM}
and~\eqref{eq:F*2}, one expects that the sum over $i \in \{-1,1\}$ in the last line of~\eqref{eq:Fs*} to $\Gamma$-converge to $\F^*$, up to renormalizing. 
We show that the remaining term $-2\alpha_0 (1-s)\per_\Omega^{s}(E_{-1}, E_1)$ is negligible. 
Indeed, since $\alpha_0\leq0$, this term can be dropped when bounding from below. 
When bounding from above, we will build a recovery sequence $\E^k$ for which $E_{-1}^k$ and $E_1^k$ are separated in such a way that, by Lemma~\ref{lem:separated}, this term vanishes in the limit.
However, in our setting, one of the difficulties is ensuring that the recovery sequence is admissible. 

\subsection{Bounding the energy from below}

We establish two $\Gamma$-$\liminf$ inequalities. First, we prove the interior result in Theorem~\ref{thm:s-gamma}. Then, we will prove a similar result up to the boundary and with given exterior data for proving Theorem~\ref{thm:localmin}. 

\begin{proposition}\label{prop:s-liminf}
Let~$\Omega \subset \R^n$ be a bounded domain with Lipschitz boundary and assume that~$\alpha_0\leq0$. 
Let~$s_k \nearrow 1$ as $k \to \infty$. 

If the admissible  $\E$ and $\E^k$ are such that $\E^k \to \E$  in $L^1_{\text{loc}}(\R^n)$ as $k \to \infty$, then 
\begin{equation}\label{eq:s-liminf}
\liminf_{k \to \infty} (1-s_k) \F^{s_k}(\E^k) \geq \omega_{n-1}\F^*(\E). 
\end{equation}
\end{proposition}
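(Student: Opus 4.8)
The strategy is to pass from the representation \eqref{eq:Fs*} of $\F^{s_k}$ in terms of the single-set nonlocal perimeters $\per^{s_k}_\Omega(E_{-1})$, $\per^{s_k}_\Omega(E_1)$ and the coupling term $\per^{s_k}_\Omega(E_{-1},E_1)$, exploit the sign $\alpha_0\leq 0$ to discard the coupling term in the lower bound, and then invoke the $\Gamma$-$\liminf$ half of Theorem~\ref{thm:ADPM} for each surviving perimeter. First I would write, using \eqref{eq:Fs*},
\[
(1-s_k)\F^{s_k}(\E^k)
= \sum_{i \in \{-1,1\}} \alpha_i^*\,(1-s_k)\per^{s_k}_\Omega(E_i^k)
\; - \; 2\alpha_0\,(1-s_k)\per^{s_k}_\Omega(E_{-1}^k,E_1^k).
\]
Since $\alpha_0\leq 0$ and $\per^{s_k}_\Omega(A,B)\geq 0$ always, the last term is $\geq 0$, so it can simply be dropped:
\[
(1-s_k)\F^{s_k}(\E^k)\;\geq\; \sum_{i\in\{-1,1\}}\alpha_i^*\,(1-s_k)\per^{s_k}_\Omega(E_i^k).
\]

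Next, since $\E^k\to\E$ in $L^1_{\text{loc}}(\R^n)$ means $\chi_{E_i^k}\to\chi_{E_i}$ in $L^1_{\text{loc}}$ for each $i$, I would like to apply part (1) of Theorem~\ref{thm:ADPM} to conclude $\liminf_{k\to\infty}(1-s_k)\per^{s_k}_\Omega(E_i^k)\geq\omega_{n-1}\per_\Omega(E_i)$. One subtlety: Theorem~\ref{thm:ADPM}(1) as stated gives the $\liminf$ bound only for the \emph{interior} interaction $(1-s_k)L^{s_k}(E_i^k\cap\Omega,(E_i^k)^c\cap\Omega)$, which is $\leq\per^{s_k}_\Omega(E_i^k)$; since the missing cross terms $L^{s_k}(E_i^k\cap\Omega,(E_i^k)^c\cap\Omega^c)$ and $L^{s_k}(E_i^k\cap\Omega^c,(E_i^k)^c\cap\Omega)$ are nonnegative, we still get $\liminf_{k\to\infty}(1-s_k)\per^{s_k}_\Omega(E_i^k)\geq\omega_{n-1}\per_\Omega(E_i)$, which is all that is needed. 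Because each $\alpha_i^*=\sigma_{i,0}>0$, I can then take $\liminf$ termwise (superadditivity of $\liminf$) to obtain
\[
\liminf_{k\to\infty}(1-s_k)\F^{s_k}(\E^k)\;\geq\;\sum_{i\in\{-1,1\}}\alpha_i^*\,\omega_{n-1}\per_\Omega(E_i)\;=\;\omega_{n-1}\F^*(\E),
\]
where the last equality is exactly \eqref{eq:F*2}. This proves \eqref{eq:s-liminf}.

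\textbf{Main obstacle.} The computation is structurally short; the only genuine care points are (i) matching the precise form of Theorem~\ref{thm:ADPM}(1)—which only controls the purely interior Gagliardo contribution—to the full $\per^{s_k}_\Omega$, handled by the nonnegativity of the discarded cross terms as above; and (ii) making sure the renormalization constant is consistent, i.e. that $(1-s)\per^s_\Omega$ (rather than $\frac{s(1-s)}{\nu(n,s)}\per^s_\Omega$ as in Section~\ref{sec:uniformestimates}) is the normalization for which the $\Gamma$-limit constant is $\omega_{n-1}$; this is precisely the content of Theorem~\ref{thm:ADPM} and requires no further work. There is no compactness issue here since the converging sequence $\E^k$ is given. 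Thus I expect no serious obstacle: the sign condition $\alpha_0\leq 0$ does all the heavy lifting by letting us throw away the long-range $E_{-1}$–$E_1$ interaction, and the rest is a direct appeal to \cite{ADPM}.
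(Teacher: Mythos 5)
Your proof is correct and follows essentially the same route as the paper: express $\F^{s_k}$ via the decomposition~\eqref{eq:Fs*}, drop the coupling term $-2\alpha_0(1-s_k)\per^{s_k}_\Omega(E_{-1}^k,E_1^k)$ by sign, and apply Theorem~\ref{thm:ADPM}(1) termwise to the single-set perimeters. You are in fact slightly more explicit than the paper about the detail that Theorem~\ref{thm:ADPM}(1) only bounds the interior Gagliardo contribution $(1-s_k)L^{s_k}(E_i^k\cap\Omega,(E_i^k)^c\cap\Omega)$ and that the discarded cross terms in $\per^{s_k}_\Omega(E_i^k)$ are nonnegative; the paper appeals to Lemma~\ref{lem:coercivity} to record that each $E_i$ has finite perimeter, but, as your argument shows, this preliminary step is not strictly needed for the inequality.
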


We will need the following coercivity result. Recall the notation in~\eqref{eq:Fomega}. 

\begin{lemma}\label{lem:coercivity}
Assume that $\alpha_0\leq0$. 
Let~$s_k \nearrow 1$ as $k \to \infty$. If admissible $\E^k$ are such that
\[
\sup_{k \in \N} (1-s_k)\F^s(\E^k,\Omega') <\infty \quad \hbox{for all}~\Omega' \Subset \Omega,
\]
then $\{E_i^k\}_{k \in \N}$, for~$i \in \{-1,1\}$, is relatively compact in $L^1_{\text{loc}}(\Omega)$ and any limit point $\E = \{E_{-1}, E_0, E_1\}$ is admissible and  such that $E_i$, for~$i \in \{-1,1\}$, have locally finite perimeter in~$\Omega$. 
\end{lemma}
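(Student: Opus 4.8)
The plan is to deduce compactness for the sets $E_i^k$ with $i\in\{-1,1\}$ from a uniform bound on their classical perimeters on compactly contained subdomains, obtained via the $\Gamma$-$\liminf$ half of the Ambrosio--De~Philippis--Martinazzi result (Theorem~\ref{thm:ADPM}) applied in reverse. The starting point is formula~\eqref{eq:Fs*}, which since $\alpha_0\le 0$ gives, for any $\Omega'\Subset\Omega$,
\[
(1-s_k)\F^{s_k}(\E^k,\Omega')
= \sum_{i\in\{-1,1\}} \alpha_i^*\,(1-s_k)\per^{s_k}_{\Omega'}(E_i^k)
+ 2|\alpha_0|\,(1-s_k)\per^{s_k}_{\Omega'}(E_{-1}^k,E_1^k),
\]
where every term on the right is nonnegative because $\alpha_i^*=\sigma_{i,0}>0$ and $\alpha_0\le 0$. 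Hence the assumed bound $\sup_k (1-s_k)\F^{s_k}(\E^k,\Omega')<\infty$ forces $\sup_k (1-s_k)\per^{s_k}_{\Omega'}(E_i^k)<\infty$ for $i\in\{-1,1\}$; dropping the $\per^{s_k}$ contributions coming from interactions with $\Omega^c$, we also control $\sup_k (1-s_k)L^{s_k}(E_i^k\cap\Omega',(E_i^k)^c\cap\Omega')<\infty$.

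Next I would invoke the standard compactness statement underlying fractional-perimeter $\Gamma$-convergence (see~\cite{ADPM}, and originally the BBM-type estimates of~\cite{Brezis}): a sequence of sets whose $(1-s_k)$-renormalized Gagliardo seminorms on a fixed open set are uniformly bounded, with $s_k\nearrow 1$, is relatively compact in $L^1_{\mathrm{loc}}$, and any $L^1_{\mathrm{loc}}$-limit has locally finite perimeter there. Applying this on an exhaustion $\Omega'_1\Subset\Omega'_2\Subset\cdots\Subset\Omega$ and extracting a diagonal subsequence, I obtain sets $E_{-1},E_1$ of locally finite perimeter in $\Omega$ with $\chi_{E_i^k}\to\chi_{E_i}$ in $L^1_{\mathrm{loc}}(\Omega)$. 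Defining $E_0 := \R^n\setminus(E_{-1}\cup E_1)$ (matching the exterior data of the $E_i^k$ outside $\Omega$, which are fixed and hence pass to the limit trivially), one gets $\chi_{E_0^k}\to\chi_{E_0}$ in $L^1_{\mathrm{loc}}$ as well. Admissibility of the limit $\E=\{E_{-1},E_0,E_1\}$ is then immediate: the pairwise-disjointness and covering conditions are $L^1_{\mathrm{loc}}$-closed, since $|E_i^k\cap E_j^k|=0$ and $|\R^n\setminus\bigcup_i E_i^k|=0$ pass to the limit under local $L^1$-convergence of characteristic functions.

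A technical point worth spelling out: the compactness theorem I want to cite is usually stated for a single set on a bounded domain, whereas here I need it for $E_{-1}$ and $E_1$ separately and on an exhausting family. This is handled by applying it once per set per $\Omega'_m$ and diagonalizing; no new ideas are required. I expect the main (mild) obstacle to be the bookkeeping needed to pass from the bound on $\F^{s_k}$ — which is a sum of $\per^{s_k}_{\Omega'}(E_i,E_j)$ terms involving both the local Gagliardo energy and the cross interactions with $\Omega^c$ — down to a clean bound on just the interior Gagliardo seminorm of each $E_i$, which is the exact object the compactness statement consumes; this is precisely what the nonnegativity of every summand in the displayed identity above makes routine. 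Once this reduction is in hand, the conclusion that each limiting $E_i$ ($i\in\{-1,1\}$) has locally finite perimeter in $\Omega$ and that $\E$ is admissible follows directly from the cited compactness and the closedness of the admissibility constraints.
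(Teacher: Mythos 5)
Your proposal follows essentially the same route as the paper: both use formula~\eqref{eq:Fs*} with $\alpha_0\leq0$ to show that the bound on $(1-s_k)\F^{s_k}(\E^k,\Omega')$ controls $(1-s_k)\per^{s_k}_{\Omega'}(E_i^k)$ for $i\in\{-1,1\}$, and then both invoke the Ambrosio--De~Philippis--Martinazzi compactness theorem (\cite{ADPM}, Theorem~1). Your write-up is somewhat more explicit (spelling out the exhaustion/diagonal argument and the admissibility of the limit, which the paper leaves implicit), but the key reduction and the result being cited are identical, so this is the same proof.
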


\begin{proof}
It is enough to prove that $\{E_{-1}^k\}_{k \in \N}$  and $\{E_1^k\}_{k \in \N}$ are relatively compact in $L^1_{\text{loc}}(\Omega)$ and any limit points $E_{-1}$ and $E_1$, respectively, have locally finite perimeter in~$\Omega$. 

Fix $i \in \{-1,1\}$ and let $\Omega' \Subset \Omega$. 
By~\eqref{eq:Fs*} (with $\Omega'$ in place of $\Omega$) and since $\alpha_0\leq0$, we have that
\[
 \sup_{k \in \N} (1-s_k) \alpha_i^* \per_{\Omega'}^{s_k}(E_i^k) 
\leq \sup_{k \in \N} (1-s_k) \F^s(\E^k,\Omega') <\infty.
\]
By~\cite{ADPM}*{Theorem 1}, it holds that $\{E_i^k\}_{k \in \N}$ is relatively compact in $L^1_{\text{loc}}(\Omega)$ and any limit point~$E_i$ has locally finite perimeter in~$\Omega$. 
\end{proof}

We now proceed with the proof of the $\liminf$-inequality. 

\begin{proof}[Proof of Proposition~\ref{prop:s-liminf}]
We may assume that 
\[
\liminf_{k \to \infty}(1-s_k) \F^{s_k}(\E^k) < \infty
\]
since otherwise the inequality is trivial. Consequently, by Lemma~\ref{lem:coercivity}, the sets $E_i$ have finite perimeter in~$\Omega$. 

Recalling~\eqref{eq:Fs*} and that $\alpha_0\leq0$, we find that
\[
\F^{s_k}(\E^k)\geq \sum_{i \in \{-1,1\}} \alpha_i^* \per_\Omega^{s_k}(E_i^k). 
\]
Since $\alpha_i^*>0$ for $i \in \{-1,1\}$, we use Theorem~\ref{thm:ADPM} to find
\begin{align*}
&\liminf_{k \to \infty} (1-s_k)\F^{s_k}(\E^k) 
\geq \sum_{i \in \{-1,1\}} \alpha_i^* \liminf_{k \to \infty} (1-s_k) \per_\Omega^{s_k}(E_i^k)\\
&\qquad \geq \sum_{i \in \{-1,1\}} \alpha_i^* \omega_{n-1} \per_\Omega(E_i)
	= \omega_{n-1}\F^*(\E),
\end{align*}
which proves~\eqref{eq:s-liminf}. 
\end{proof}

\subsubsection{Given exterior data}

We now prove a $\liminf$-inequality up to the boundary with prescribed exterior boundary conditions. 
First, we consider the fractional perimeter functional. 

Recalling~\eqref{eq:delta-nbhd} , for $\delta>0$, set
\[
(\Omega_\delta^+)^o = \{x \in \Omega^c~\hbox{s.t.}~0 < \dist(x, \Omega) <\delta\}. 
\]

\begin{lemma}\label{lem:bdry-liminf-oneset}
Assume that $\Omega\subset\R^n$ is a bounded domain with Lipschitz boundary. 
Suppose that~$E \subset \R^n$ is such that 
$\per_{(\Omega_\delta^+)^o}(E)<\infty$  and $\partial E \cap (\Omega_\delta^+)^o$ is Lipschitz for some $\delta \in (0,1)$. 

Let~$s_k \nearrow 1$ as $k \to \infty$. 
If $\E^k$ are such that $E^k \cap \Omega^c = E \cap \Omega^c$ and $\chi_{E^k} \to \chi_{E}$ in $L_{\text{loc}}^1(\R^n)$, then
\[
\liminf_{k \to \infty} (1-s_k) \per_{\Omega}^{s_k}(E^k) \geq \omega_{n-1} \per_{\overline{\Omega}}(E).
\]
\end{lemma}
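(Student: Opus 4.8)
The plan is to reduce the statement to the $\Gamma$-$\liminf$ inequality for the fractional perimeter "up to the boundary" and to control the difference between $\per^{s_k}_\Omega$ and a fractional perimeter computed on a slightly enlarged domain where the exterior data of $E$ is fixed and regular. First I would set $\Omega^+_\delta$-type enlargements: let $U := \Omega \cup (\Omega^+_\delta)^o \cup (\partial\Omega \cap \{\dist(\cdot,\partial\Omega)<\delta\})$, or more precisely work with $\widetilde\Omega$ an open neighbourhood of $\overline\Omega$ of the form $\{x : \dist(x,\Omega)<\delta\}$, so that $\partial E \cap \widetilde\Omega$ is Lipschitz and $E^k = E$ outside $\Omega$. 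The key observation is the monotonicity-type identity from Lemma~\ref{lem:omegaomega}: since $E^k$ and $E$ agree on $\Omega^c$, the quantity $\per^{s_k}_{\widetilde\Omega}(E^k) - \per^{s_k}_\Omega(E^k)$ equals $L^{s_k}(E^k \cap (\widetilde\Omega\setminus\Omega), (E^k)^c \cap \Omega^c) + L^{s_k}(E^k \cap \Omega^c, (E^k)^c \cap (\widetilde\Omega\setminus\Omega))$, and on the region $\widetilde\Omega\setminus\Omega \subset \Omega^c$ we have $E^k = E$, so this correction term is \emph{independent of $k$} (it only involves the fixed set $E$). Hence
\[
(1-s_k)\per^{s_k}_\Omega(E^k) = (1-s_k)\per^{s_k}_{\widetilde\Omega}(E^k) - (1-s_k)\,\big[\text{fixed correction}\big],
\]
and the correction, being a fixed finite number times $(1-s_k)$, tends to $0$.

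Next I would apply the Ambrosio--De Philippis--Martinazzi $\Gamma$-$\liminf$ inequality, Theorem~\ref{thm:ADPM}(i), but on the domain $\widetilde\Omega$ rather than $\Omega$: since $\chi_{E^k}\to\chi_E$ in $L^1_{\mathrm{loc}}(\R^n)$, we get
\[
\liminf_{k\to\infty}(1-s_k)\per^{s_k}_{\widetilde\Omega}(E^k) \ge \omega_{n-1}\per_{\widetilde\Omega}(E).
\]
Combining with the previous display yields $\liminf_k (1-s_k)\per^{s_k}_\Omega(E^k) \ge \omega_{n-1}\per_{\widetilde\Omega}(E) \ge \omega_{n-1}\per_{\overline\Omega}(E)$, since $\overline\Omega \subset \widetilde\Omega$ and the perimeter is monotone in the domain. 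Here the Lipschitz regularity of $\partial E\cap(\Omega^+_\delta)^o$ and the finiteness $\per_{(\Omega^+_\delta)^o}(E)<\infty$ are exactly what is needed to ensure $\per_{\widetilde\Omega}(E)<\infty$ and to justify that the correction term is finite — strictly speaking one wants $\widetilde\Omega$ small enough that $\widetilde\Omega\setminus\overline\Omega \subset (\Omega^+_\delta)^o$.

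The main obstacle is bookkeeping around $\partial\Omega$ itself: $\per_{\overline\Omega}(E)$ weighs the interface \emph{including} the part lying on $\partial\Omega$, whereas $\per_{\widetilde\Omega}(E)$ a priori only gives us that with equality replaced by $\ge$, which is fine for a $\liminf$ bound, but one must make sure no mass is silently dropped when passing from $\widetilde\Omega$ to $\overline\Omega$ — this is handled precisely by domain monotonicity $\per_{\overline\Omega}(E)\le\per_{\widetilde\Omega}(E)$ for any open $\widetilde\Omega\supset\overline\Omega$, which holds for sets of finite perimeter. A secondary technical point is that Theorem~\ref{thm:ADPM} as quoted is stated for a fixed bounded domain; one should check (as in~\cite{ADPM}) that it applies verbatim with $\widetilde\Omega$ in place of $\Omega$, which it does since $\widetilde\Omega$ is again a bounded Lipschitz domain when $\delta$ is small and $\Omega$ is Lipschitz. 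Once these points are in place the argument is short.
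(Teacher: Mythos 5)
There is a genuine gap in your argument, and it lies precisely in the treatment of the correction term.

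You write the identity $\per^{s_k}_{\widetilde\Omega}(E^k) - \per^{s_k}_\Omega(E^k) = L^{s_k}(E\cap(\widetilde\Omega\setminus\Omega), E^c\cap\Omega^c) + L^{s_k}(E\cap\widetilde\Omega^c, E^c\cap(\widetilde\Omega\setminus\Omega))$, and you correctly observe that this quantity is independent of $k$ in the sense that it involves only the fixed set $E$ restricted to $\Omega^c$. But it is \emph{not} a number independent of $s_k$: the kernel $|x-y|^{-n-s_k}$ varies with $k$, and the two sets $E\cap(\Omega_\delta^+)^o$ and $E^c\cap\Omega^c$ share an interface inside $(\Omega_\delta^+)^o$ (the hypothesis $\per_{(\Omega_\delta^+)^o}(E)<\infty$ allows this interface to be nontrivial; it does not suppress it). Consequently $L^{s_k}(E\cap(\Omega_\delta^+)^o, E^c\cap\Omega^c)$ blows up like $(1-s_k)^{-1}$ as $s_k\nearrow1$, and its $(1-s_k)$-renormalization converges to roughly $\omega_{n-1}\per_{(\Omega_\delta^+)^o}(E)>0$, not to $0$. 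Your step ``the correction, being a fixed finite number times $(1-s_k)$, tends to $0$'' is therefore false, and the resulting bound $\liminf_k (1-s_k)\per^{s_k}_\Omega(E^k)\ge\omega_{n-1}\per_{\widetilde\Omega}(E)$ is strictly stronger than what is true whenever $\per_{(\Omega_\delta^+)^o}(E)>0$ (try $E^k\equiv E$ with $\partial E$ crossing the exterior strip: the left side then equals $\omega_{n-1}\per_{\overline\Omega}(E)<\omega_{n-1}\per_{\widetilde\Omega}(E)$).

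The fix is exactly what the paper does. Your overall framework — apply Theorem~\ref{thm:ADPM}(i) on the enlarged domain and subtract the exterior contribution — is the right idea, but the exterior contribution must be kept and its renormalized limit computed. The paper bounds $L^{s_k}$ on $\Omega\cup\Omega_\delta$ by $\per_\Omega^{s_k}(E^k)$ plus $L^{s_k}(E\cap\Omega_\delta^+,E^c\cap\Omega_\delta^+)$, and then invokes~\cite{CVcalcvar}*{Lemma 11} (valid under the Lipschitz hypothesis on $\partial E\cap(\Omega_\delta^+)^o$) to conclude $\lim_k(1-s_k)L^{s_k}(E\cap\Omega_\delta^+,E^c\cap\Omega_\delta^+)=\omega_{n-1}\per_{(\Omega_\delta^+)^o}(E)$. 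Substituting this value, the final bound is $\omega_{n-1}[\per_{\Omega\cup\Omega_\delta}(E)-\per_{(\Omega_\delta^+)^o}(E)]=\omega_{n-1}\per_{\overline\Omega}(E)$, with equality rather than a gratuitous inequality. So the Lipschitz hypothesis on $\partial E$ in the exterior strip is not just bookkeeping — it is used to evaluate the renormalized limit of the correction term, which is the crux of the proof.
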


\begin{proof}
By Theorem~\ref{thm:ADPM}, it holds that
\[
\liminf_{k \to \infty} (1-s_k)L^{s_k}(E^k \cap (\Omega \cup \Omega_\delta),(E^k)^c \cap (\Omega \cup \Omega_\delta)) \geq \omega_{n-1} \per_{\Omega \cup \Omega_\delta} (E). 
\]
We observe that
\begin{align*}
&L^{s_k}(E^k \cap (\Omega \cup \Omega_\delta),(E^k)^c \cap (\Omega \cup \Omega_\delta))\\
	&\quad= L^{s_k}(E^k \cap \Omega, (E^k)^c \cap \Omega)
	 +L^{s_k}(E^k \cap \Omega_\delta^+, (E^k)^c \cap \Omega)
	+ L^{s_k}(E^k \cap \Omega, (E^k)^c \cap \Omega_\delta^+)\\
	&\qquad +L^{s_k}(E \cap \Omega_\delta^+,  E^c \cap \Omega_\delta^+) \\
	&\quad\leq \per_{\Omega}^{s_k}(E^k) + L^{s_k}(E \cap \Omega_\delta^+,  E^c \cap \Omega_\delta^+). 
\end{align*}
By~\cite{CVcalcvar}*{Lemma 11} (for which one may assume Lipschitz regularity in place of $C^{1,\alpha}$), we have
\[
\lim_{k \to \infty} (1-s_k)L^{s_k}(E \cap \Omega_\delta^+,  E^c \cap \Omega_\delta^+) = \omega_{n-1} \per_{(\Omega_\delta^+)^o}(E) <\infty, 
\]
hence
\begin{align*}
\omega_{n-1} \per_{\Omega \cup \Omega_\delta}(E)
 	\leq \liminf_{k \to \infty} (1-s_k) \per_{\Omega}^{s_k}(E^k) + \omega_{n-1}\per_{(\Omega_\delta^+)^o}(E). 
\end{align*}
Therefore, we conclude that
\[
\liminf_{k \to \infty} (1-s_k) \per_{\Omega}^{s_k}(E^k)  \geq \omega_{n-1} [\per_{\Omega \cup \Omega_\delta}(E) - \per_{(\Omega_\delta^+)^o}(E)] = \omega_{n-1} \per_{\overline{\Omega}}(E).\qedhere
\]
\end{proof}

We then obtain the analogous result for $\F^s$. 

\begin{proposition}\label{prop:bdry-liminf}
Let~$\Omega \subset \R^n$ be a bounded domain with Lipschitz boundary and assume that~$\alpha_0\leq0$. 
Suppose that $\E$ is admissible and such that, for $i \in \{-1,1\}$, 
$\per_{(\Omega_\delta^+)^o}(E_i)<\infty$  and $\partial E_i \cap (\Omega_\delta^+)^o$ is Lipschitz for some $\delta\in(0,1)$. 

Let~$s_k \nearrow 1$ as $k \to \infty$. 
If admissible $\E^k$ are such that $E_i^k \cap \Omega^c = E_i \cap \Omega^c$ for each $-1 \leq i \leq 1$ and $\E^k \to \E$ in $L^1_{\text{loc}}(\R^n)$ as $k \to \infty$, then 
\begin{equation*}
\liminf_{k \to \infty} (1-s_k) \F^{s_k}(\E^k,\Omega) \geq \omega_{n-1}\F^*(\E, \overline{\Omega}). 
\end{equation*}
\end{proposition}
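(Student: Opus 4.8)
The plan is to reduce Proposition~\ref{prop:bdry-liminf} to the single-set statement in Lemma~\ref{lem:bdry-liminf-oneset} exactly as Proposition~\ref{prop:s-liminf} was reduced to Theorem~\ref{thm:ADPM}. As usual, we may assume $\liminf_{k\to\infty}(1-s_k)\F^{s_k}(\E^k,\Omega)<\infty$, for otherwise there is nothing to prove; passing to a subsequence we may take this $\liminf$ to be a genuine limit and assume the quantities $(1-s_k)\F^{s_k}(\E^k,\Omega)$ are bounded. The key algebraic input is the decomposition~\eqref{eq:Fs*}, which (using $\alpha_0\leq 0$) gives the pointwise bound
\[
\F^{s_k}(\E^k,\Omega)\geq \sum_{i\in\{-1,1\}}\alpha_i^*\,\per_\Omega^{s_k}(E_i^k),
\]
since $-2\alpha_0\per_\Omega^{s_k}(E_{-1}^k,E_1^k)\geq 0$. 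Each set $E_i^k$ for $i\in\{-1,1\}$ agrees with $E_i$ outside $\Omega$ by hypothesis, and $E_i$ satisfies the regularity assumptions of Lemma~\ref{lem:bdry-liminf-oneset} near $\partial\Omega$; moreover $E_i^k\to E_i$ in $L^1_{\mathrm{loc}}(\R^n)$. Hence Lemma~\ref{lem:bdry-liminf-oneset} applies to each of the two sets and yields
\[
\liminf_{k\to\infty}(1-s_k)\per_\Omega^{s_k}(E_i^k)\geq \omega_{n-1}\per_{\overline\Omega}(E_i),\qquad i\in\{-1,1\}.
\]

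Combining, and using that $\alpha_i^*>0$ for $i\in\{-1,1\}$ together with superadditivity of $\liminf$,
\[
\liminf_{k\to\infty}(1-s_k)\F^{s_k}(\E^k,\Omega)\geq \sum_{i\in\{-1,1\}}\alpha_i^*\,\omega_{n-1}\per_{\overline\Omega}(E_i)=\omega_{n-1}\F^*(\E,\overline\Omega),
\]
where the last equality is the analogue of~\eqref{eq:F*2} with $\Omega$ replaced by $\overline\Omega$: since $\alpha_0^*=0$ and $\alpha_i^*=\sigma_{i,0}$, the representation $\F^*(\E,\overline\Omega)=\sum_{i\in\{-1,1\}}\alpha_i^*\per_{\overline\Omega}(E_i)$ holds by Lemma~\ref{lem:JAB} (valid for $s=1$ by Remark~\ref{rem:JAB1}). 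This closes the argument.

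The only genuine subtlety — and the step I expect to require the most care — is the passage to a subsequence and the verification that the hypotheses of Lemma~\ref{lem:bdry-liminf-oneset} are met. Specifically, one must check that boundedness of $(1-s_k)\F^{s_k}(\E^k,\Omega)$ indeed transfers to boundedness of $(1-s_k)\per_\Omega^{s_k}(E_i^k)$ (immediate from the displayed lower bound, since the dropped term is nonnegative), and that the fixed exterior set $E_i$ has the Lipschitz interface and finite perimeter in $(\Omega_\delta^+)^o$ required by the lemma — which is precisely what is assumed in the statement. One should also note that applying $\liminf$ termwise to the sum of two nonnegative sequences only loses information in our favour (superadditivity), so no compactness argument beyond what Lemma~\ref{lem:coercivity} already provides is needed; in fact, since the exterior data is fixed and the interior limit $\E$ is given, even that is not required here. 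Everything else is a direct substitution of Lemma~\ref{lem:bdry-liminf-oneset} into~\eqref{eq:Fs*}.
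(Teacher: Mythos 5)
Your proof is correct and follows essentially the same route as the paper, which reduces Proposition~\ref{prop:bdry-liminf} to Lemma~\ref{lem:bdry-liminf-oneset} via the decomposition~\eqref{eq:Fs*}, dropping the nonnegative cross term $-2\alpha_0\per_\Omega^{s_k}(E_{-1}^k,E_1^k)$ and then applying superadditivity of $\liminf$. You also correctly supply the one detail the paper leaves implicit, namely the identity $\F^*(\E,\overline\Omega)=\sum_{i\in\{-1,1\}}\alpha_i^*\per_{\overline\Omega}(E_i)$, which is the $\overline\Omega$-version of~\eqref{eq:F*2} justified by Remark~\ref{rem:JAB1}.
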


\begin{proof}
The proof follows along the same lines as that of  Proposition~\ref{prop:s-liminf} (using Lemma~\ref{lem:bdry-liminf-oneset} instead of Theorem~\ref{thm:ADPM}). 
\end{proof}

\subsection{Bounding the energy from above}

We prove two $\Gamma$-$\limsup$ inequalities. First, we prove the interior result in Theorem~\ref{thm:s-gamma}. Then, we will prove a similar result with given exterior data needed for Theorem~\ref{thm:localmin}. 

\begin{proposition}\label{prop:s-limsup}
Assume that $\alpha_0\leq0$, and let $s_k \nearrow 1$ as $k \to \infty$. 
Let~$\E$ be admissible. 

Then there exists a sequence of admissible $\E^k$ such that $\E^k \to \E$ in $L^1_{\text{loc}}(\R^n)$ as $k \to \infty$  and
\begin{equation}\label{eq:s-limsup}
\limsup_{k \to \infty} (1-s_k) \F^{s_k}(\E^k) \leq  \omega_{n-1}\F^*(\E). 
\end{equation}
\end{proposition}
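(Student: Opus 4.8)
The plan is to build the recovery sequence in two stages, reducing the $\Gamma$-$\limsup$ for the full cluster energy to the single-set $\Gamma$-$\limsup$ of Theorem~\ref{thm:ADPM}. Recall from~\eqref{eq:Fs*} that
\[
\F^{s_k}(\E^k) = \sum_{i \in \{-1,1\}} \alpha_i^* \per_\Omega^{s_k}(E_i^k) - 2\alpha_0 \per_\Omega^{s_k}(E_{-1}^k, E_1^k),
\]
while $\omega_{n-1}\F^*(\E) = \omega_{n-1}\sum_{i \in \{-1,1\}} \alpha_i^* \per_\Omega(E_i)$ by~\eqref{eq:F*2}. So it suffices to produce admissible $\E^k \to \E$ along which $(1-s_k)\per_\Omega^{s_k}(E_i^k) \to \omega_{n-1}\per_\Omega(E_i)$ for $i \in \{-1,1\}$ and along which the cross term $(1-s_k)\per_\Omega^{s_k}(E_{-1}^k, E_1^k)$ stays bounded (or, better, vanishes); since $\alpha_0 \le 0$, a vanishing cross term gives exactly the claimed upper bound, and a merely bounded one would require a slightly more careful bookkeeping.

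First I would handle the cross term by separating the phases $-1$ and $1$. The natural device is Theorem~\ref{thm:reduce} (or rather its underlying construction in Corollary~\ref{cor:one-sided-strip}): insert a thin layer of phase $0$ along the $-1/1$ interface. More cheaply, and without regularity hypotheses on $\E$, one can argue by density — it is enough to prove~\eqref{eq:s-limsup} for $\E$ in a class that is dense in energy (e.g.\ polyhedral clusters, using lower semicontinuity of $\F^*$ and a diagonal argument), and for such $\E$ one may first replace $\E$ by a cluster $\widehat{\E}$ in which $\overline{E_{-1}}$ and $\overline{E_1}$ are disjoint, at the cost of at most an $\eta$ in the limiting energy, because covering the $-1/1$ interface by a strip of phase $0$ of width $\delta$ changes $\F^*$ by $O(\delta)$ (this is precisely formula~\eqref{eq:f1fstar} together with $\sigma_{-1,1}^* = \sigma_{-1,0}+\sigma_{0,1}$). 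Once $\dist(E_{-1}, E_1) > 0$, we automatically have $\dist(E_{-1}^k, E_1^k) \ge c > 0$ for the approximants if they converge uniformly enough near that interface, so Lemma~\ref{lem:separated} kills the cross term: $(1-s_k)\per_\Omega^{s_k}(E_{-1}^k, E_1^k) \to 0$.

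Second, with the phases separated, I would invoke Theorem~\ref{thm:ADPM}(ii) to get, for each $i \in \{-1,1\}$ individually, a sequence $E_i^{k} \to E_i$ in $L^1_{\mathrm{loc}}$ with $\limsup_k (1-s_k)\per_\Omega^{s_k}(E_i^k) \le \omega_{n-1}\per_\Omega(E_i)$. The issue is that these two sequences are produced independently and need not be disjoint, so $\{E_{-1}^k, E_0^k := (E_{-1}^k \cup E_1^k)^c, E_1^k\}$ need not be admissible. I would fix this by a small perturbation: since $E_{-1}$ and $E_1$ are at positive distance, and $E_i^k \to E_i$ in $L^1_{\mathrm{loc}}$, the overlap $E_{-1}^k \cap E_1^k$ has measure tending to $0$ and (for $k$ large) is confined to a fixed compact neighborhood of $\partial\Omega$ or can be made so; one reassigns the overlap to phase $0$, say setting $\widetilde{E}_1^k = E_1^k \setminus E_{-1}^k$, which only decreases $\per_\Omega^{s_k}(E_1^k)$ up to the interaction of the removed sliver with the rest — a term controlled by its measure via a standard estimate — and hence does not spoil the $\limsup$. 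The resulting $\widetilde{\E}^k$ is admissible, converges to $\E$, and satisfies~\eqref{eq:s-limsup}. Finally a diagonal argument over the density approximation and the separation parameter $\eta \searrow 0$ removes the auxiliary reductions.

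The main obstacle is the admissibility repair: one must choose the individual recovery sequences for $E_{-1}$ and $E_1$ compatibly so that their union still forms (after a negligible correction) a partition, while simultaneously keeping the energy budget. Using the positive-distance reduction first is what makes this manageable — it localizes all the interaction between the two recovery sequences to sets of vanishing measure, so that both the cross term and the overlap-correction error vanish in the limit. Without that reduction (i.e.\ when the $-1/1$ interface is genuinely present) one would be forced to control $(1-s_k)\per_\Omega^{s_k}(E_{-1}^k, E_1^k)$ directly along a carefully matched pair of sequences, which is considerably more delicate; the density-plus-separation trick circumvents it.
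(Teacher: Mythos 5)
Your overall strategy — separate phases $-1$ and $1$ so that the cross term $-2\alpha_0\per_\Omega^{s_k}(E_{-1}^k,E_1^k)$ vanishes by Lemma~\ref{lem:separated}, and then handle each $\per_\Omega^{s_k}(E_i^k)$ individually — is the same as the paper's. But the second half of your plan both over-complicates and has a genuine gap.

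The gap is in the ``admissibility repair.'' You invoke Theorem~\ref{thm:ADPM}(ii) independently for $E_{-1}$ and $E_1$, obtaining recovery sequences $E_{-1}^k$ and $E_1^k$ with no mutual compatibility, then remove the overlap sliver $A^k=E_1^k\cap E_{-1}^k$ and assert that this changes $(1-s_k)\per_\Omega^{s_k}(E_1^k)$ by ``a term controlled by its measure via a standard estimate.'' No such estimate exists: by Lemma~\ref{lem:E-difference}(1) the perimeter can \emph{increase} when you remove $A^k$, and the increase is bounded by $L(A^k, E_1^k\setminus A^k)$, an interaction that depends on the geometry (in particular on the boundary regularity) of $A^k$ rather than on $|A^k|$; since $A^k$ is the intersection of two abstractly-produced recovery sequences, its perimeter and shape are completely uncontrolled. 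The hypothesis that the overlap ``is confined to a fixed compact neighborhood of $\partial\Omega$ or can be made so'' is also not delivered by $L^1_{\mathrm{loc}}$ convergence. So as written, this step does not go through.

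The detour is also unnecessary. Once you have reduced to a cluster $\widehat{\E}$ that is Lipschitz (or polyhedral), transversal to $\partial\Omega$, and with $E_{-1}$ and $E_1$ at positive distance — which your first reduction via strip insertion achieves — the constant sequence $\E^k=\widehat{\E}$ is already a valid recovery sequence: Remark~\ref{rem:uniform} (i.e.\ Theorem~\ref{thm:uniformintro} with $\per_{\overline\Omega}(E_1,E_{-1})=0$ and a transversality condition) gives $(1-s_k)\F^{s_k}(\widehat{\E})\to\omega_{n-1}\F^*(\widehat{\E})$ directly; no separate recovery sequences, and hence no admissibility issue. This is what the paper does, but with a slightly different reduction step: instead of density plus a phase-$0$ strip, the paper shrinks each of $E_1$, $E_{-1}$ inward by $\eps$ and applies Modica's smoothing lemma (\cite{Modica}*{Lemma 1}), which in one stroke produces smooth $E_1^\eps, E_{-1}^\eps$ satisfying $\dist(E_i^\eps,\partial E_i)\geq\eps$ (hence disjoint and separated), $\mathcal{H}^{n-1}(\partial E_i^\eps\cap\partial\Omega)=0$ (transversality), and $\per_\Omega(E_i^\eps)\to\per_\Omega(E_i)$; taking $E_0^\eps=\R^n\setminus(E_1^\eps\cup E_{-1}^\eps)$ gives admissibility for free. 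A diagonal argument in $(\eps,k)$ then finishes. Your strip-insertion idea is a sound alternative to the inward shrink, but it should be followed by the pointwise convergence of Remark~\ref{rem:uniform}, not by an attempt to patch together independent $\Gamma$-$\limsup$ sequences.
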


\begin{proof}
We may assume that $\F^*(\E)<\infty$; otherwise there is nothing to show. 

Fix $i \in \{-1,1\}$. For $\eps>0$, define
\[
\widetilde{E}_i^\eps = \{ x \in E_i~\hbox{s.t.}~\dist(x, \partial E_i ) \geq 2\eps\},
\]
and notice that $\chi_{\widetilde{E}_i^\eps} \to \chi_{E_i}$ as $\eps \searrow 0$ in $L^1_{\text{loc}}(\R^n)$ and 
\[
\per_\Omega(E_i) \geq \lim_{\eps \searrow 0} \per_\Omega(\widetilde{E}_i^\eps). 
\]
By~\cite{Modica}*{Lemma 1}, 
there exist open sets $E_i^\eps$, with smooth boundaries in $\R^n$ and finite (classical) perimeter in~$\Omega$, such that $\chi_{E_i^\eps} \to \chi_{E_i}$ as $\eps \searrow 0$ in $L^1_{\text{loc}}(\R^n)$, 
\begin{equation}\label{eq:eps-properties}
\mathcal{H}^{n-1}(\partial E_i^\eps \cap \partial \Omega) = 0, \quad 
\per_\Omega(E_i) = \lim_{\eps \searrow 0} \per_\Omega(E_i^\eps), \quad \hbox{and} 
\quad \dist(E_i^\eps, \partial E_i) \geq \eps. 
\end{equation}
For $i=0$, we define 
\[
E_0^\eps = \R^n \setminus (E_1^\eps \cup E_{-1}^\eps), 
\]
so the admissible $\E^\eps$ are such that $\E^\eps \to \E$ in   
$L^1_{\text{loc}}(\R^n)$ as $\eps \searrow 0$. 

Looking at the energy for $\E$, we find
\begin{align*}
& \omega_{n-1}\F^*(\E)
= \sum_{i \in \{-1,1\}} \alpha_i^* \omega_{n-1} \per_\Omega(E_i)\\
&\qquad \geq \lim_{\eps \searrow 0} \sum_{i \in \{-1,1\}} \alpha_i^* \omega_{n-1} \per_\Omega(E_i^\eps)
	=   \lim_{\eps \searrow 0} \omega_{n-1}\F^*(\E^\eps).
\end{align*}
By Remark~\ref{rem:uniform} and~\eqref{eq:eps-properties}, we have that
\[
\lim_{k \to \infty} \big|(1-s_k) \F^{s_k}(\E^\eps) -\omega_{n-1}\F^*(\E^\eps)\big| = 0,
\]
which implies 
\begin{equation}\label{eq:finallimit}
 \omega_{n-1}\F^*(\E)
 	\geq \lim_{\eps \searrow 0} \omega_{n-1}\F^*(\E^\eps)
	=  \lim_{\eps \searrow 0}\, \limsup_{k \to \infty}(1-s_k) \F^{s_k}(\E^\eps).
\end{equation}
The conclusion of the proposition now follows from standard arguments; we will write the details for completeness. 

Let~$\ell \in \N$. By~\eqref{eq:finallimit}, there is $\eps_\ell>0$ such that 
\[
 \omega_{n-1}\F^*(\E) + \frac{1}{\ell} \geq \limsup_{k \to \infty} (1-s_k) \F^{s_k}(\E^{\eps_\ell}).
\]
Next, there is $K_\ell \in \N$ such that for all $k \geq K_\ell$,
\[
 \omega_{n-1}\F^*(\E) + \frac{2}{\ell} \geq  (1-s_k) \F^{s_k}(\E^{\eps_\ell}).
\]
We may assume that $K_\ell < K_{\ell+1}$. Then, there is an invertible map~$\psi:\N \to \N$ such that~$K_\ell = \psi(\ell)$. 
Consider the map $\varphi: \N \to \N$ such that $\varphi(k) = K_\ell$ when $k \in [K_\ell, K_{\ell+1})$. Then, upon relabeling,
\[
E^k_i := E^{\eps_\ell}_i
	\quad \hbox{where}~\ell = \psi^{-1}(K_\ell) = \psi^{-1}(\varphi(k))~\hbox{is determined by}~k,
\]
the admissible $\E^k = \{E_{-1}^k, E_0^k, E_{-1}^k\}$ satisfies
\[
\F^*(\E) + \frac{2}{\ell} \geq  (1-s_k) \F^{s_k}(\E^{k}) \quad \hbox{for all}~k \geq K_\ell. 
\]
Since $\chi_{E_i^k} \to \chi_{E_i}$ as $k \to \infty$ in $L^1_{\text{loc}}(\R^n)$ for each $-1 \leq i \leq 1$, this proves the proposition. 
\end{proof}

\subsubsection{Given exterior data}\label{sec:bdry-limsup}

We now prove a $\limsup$-inequality with prescribed exterior boundary conditions. 

\begin{proposition}\label{prop:bdry-limsup}
Let~$\Omega \subset \R^n$ be a bounded domain with $C^1$ boundary and assume  that $\alpha_0\leq0$. 
Let~$s_k \nearrow 1$ as $k \to \infty$. 

If the admissible $\E$ is polyhedral in $\Omega_{\delta_0}^+$ for some $\delta_0>0$ and transversal to $\partial \Omega^+$, then there exists a sequence of admissible $\E^k$ such that 
$E_i^k \cap \Omega^c = E_i \cap \Omega^c$  for each $-1 \leq i \leq 1$, $\E^k \to \E$ in $L^1_{\text{loc}}(\R^n)$ as $k \to \infty$ in $L^1_{\text{loc}}(\R^n)$,  and
\begin{equation}\label{eq:bdry-limsup}
\limsup_{k \to \infty} (1-s_k) \F^{s_k}(\E^k) \leq  \omega_{n-1}\F^*(\E). 
\end{equation}
\end{proposition}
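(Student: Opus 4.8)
\textbf{Proof proposal for Proposition~\ref{prop:bdry-limsup}.}

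The plan is to adapt the interior construction of Proposition~\ref{prop:s-limsup} to the boundary setting, where the essential new difficulty is that the recovery sequence $\E^k$ must exactly match the prescribed exterior datum $\E$ on $\Omega^c$, while still arranging that $E_{-1}^k$ and $E_1^k$ are \emph{separated} inside $\overline{\Omega}$ so that the term $-2\alpha_0 (1-s_k)\per_\Omega^{s_k}(E_{-1}^k, E_1^k)$ from~\eqref{eq:Fs*} vanishes by Lemma~\ref{lem:separated}. First I would use the decomposition~\eqref{eq:Fs*}, so that it suffices to build admissible $\E^k$ agreeing with $\E$ on $\Omega^c$ such that (i) for $i\in\{-1,1\}$, $(1-s_k)\alpha_i^*\per_\Omega^{s_k}(E_i^k)\to \omega_{n-1}\alpha_i^*\per_{\overline\Omega}(E_i)$ (or at least $\limsup\le\omega_{n-1}\alpha_i^*\per_{\overline\Omega}(E_i)$), and (ii) $\dist(E_{-1}^k\cap\Omega, E_1^k)\ge\eps_k$ and $\dist(E_{-1}^k, E_1^k\cap\Omega)\ge\eps_k$ with $\eps_k\searrow 0$ slowly enough — say $(1-s_k)\eps_k^{-s_k}\to0$ — so that $(1-s_k)\per_\Omega^{s_k}(E_{-1}^k,E_1^k)\to 0$. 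Note that since $\F^*(\E,\overline\Omega)$ only counts interfaces in the closure, and the polyhedral/transversality hypotheses give $\per_{\partial\Omega}(E_i)=0$ and control of the perimeter in shrinking collars $\Omega_\delta$, the target energy $\omega_{n-1}\F^*(\E)$ (interior) is what we must match, with no extra boundary contribution — this is where transversality to $\partial\Omega^+$ is used.

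The core construction: fix a small $\eps>0$ and, as in Proposition~\ref{prop:s-limsup} via~\cite{Modica}*{Lemma 1}, mollify $E_{-1}$ and $E_1$ \emph{inside} $\Omega$ (leaving them untouched on a neighbourhood of $\Omega^c$, which is possible since they are polyhedral there, so already smooth) to obtain open sets $E_i^\eps$ with smooth boundary, $E_i^\eps\cap\Omega^c = E_i\cap\Omega^c$, $\mathcal H^{n-1}(\partial E_i^\eps\cap\partial\Omega)=0$, $\per_\Omega(E_i^\eps)\to\per_\Omega(E_i)$ as $\eps\searrow0$, and — crucially — shrink each slightly so that $\dist(E_{-1}^\eps\cap\Omega, E_1^\eps)\ge c\eps$ and the symmetric statement hold; on $\Omega^c$ they still agree with $\E$ (where by the polyhedral/transversality structure the two phases $-1$ and $1$ are already separated up to an $\mathcal H^{n-1}$-null polyhedral interface, or meet transversally at $\partial\Omega$, so any overlap forced on $\partial\Omega$ contributes zero perimeter). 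Set $E_0^\eps := \R^n\setminus(E_{-1}^\eps\cup E_1^\eps)$, which absorbs the thin separating layer and any adjustments, so $\E^\eps$ is admissible with the correct exterior trace. For this fixed $\eps$, the sets $E_i^\eps$ have Lipschitz (indeed smooth) boundary and satisfy the transversality condition $\mathcal H^{n-1}(\partial E_i^\eps\cap\partial\Omega)=0$, so Theorem~\ref{thm:uniformintro} together with Remark~\ref{rem:uniform} gives $(1-s_k)\F^{s_k}(\E^\eps,\Omega)\to\omega_{n-1}\F^*(\E^\eps,\Omega)$ as $k\to\infty$ (the boundary term $\F^1(\E^\eps,\partial\Omega)$ vanishes by transversality, and $\per_{\overline\Omega}(E_{-1}^\eps,E_1^\eps)=0$ puts us in the $\F^*$ case of Remark~\ref{rem:uniform}); actually here one can invoke Lemma~\ref{lem:separated} directly for the cross term and Theorem~\ref{thm:CV}/Corollary~\ref{cor:uniform} for the $\per_\Omega^{s_k}(E_i^\eps)$ terms, which is cleaner. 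Then $\omega_{n-1}\F^*(\E^\eps)\to\omega_{n-1}\F^*(\E)$ as $\eps\searrow0$.

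Finally, I would run the standard diagonal extraction exactly as in the last part of the proof of Proposition~\ref{prop:s-limsup}: for each $\ell\in\N$ pick $\eps_\ell$ with $\omega_{n-1}\F^*(\E^{\eps_\ell})\le\omega_{n-1}\F^*(\E)+1/\ell$, then $K_\ell$ with $(1-s_k)\F^{s_k}(\E^{\eps_\ell})\le\omega_{n-1}\F^*(\E)+2/\ell$ for all $k\ge K_\ell$, arrange $K_\ell<K_{\ell+1}$, relabel $E_i^k:=E_i^{\eps_\ell}$ for $k\in[K_\ell,K_{\ell+1})$, and observe that each $\E^k$ is admissible, agrees with $\E$ on $\Omega^c$ (since every $\E^{\eps}$ does), converges to $\E$ in $L^1_{\text{loc}}(\R^n)$, and satisfies $\limsup_{k\to\infty}(1-s_k)\F^{s_k}(\E^k)\le\omega_{n-1}\F^*(\E)$, which is~\eqref{eq:bdry-limsup}. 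The main obstacle is the geometric bookkeeping in the construction step: ensuring simultaneously that the mollified sets keep the exterior trace \emph{exactly}, remain a genuine partition, stay separated inside $\overline\Omega$ by a controlled amount, and do not create spurious perimeter on $\partial\Omega$ — this is where the polyhedral-in-$\Omega^c$ and transversal-to-$\partial\Omega^+$ hypotheses must be used carefully, e.g.\ to perturb the polyhedral exterior interface slightly off $\partial\Omega$ or to verify that $\per_{\Omega_\delta^+}(E_i^\eps)\to0$ uniformly enough as $\delta\searrow0$.
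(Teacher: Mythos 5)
Your high-level plan is the right one: decompose via~\eqref{eq:Fs*}, construct a recovery sequence whose two extreme phases are separated inside $\Omega$ so that Lemma~\ref{lem:separated} kills the $-2\alpha_0\per_\Omega^{s_k}(E_{-1}^k,E_1^k)$ term, apply Theorem~\ref{thm:CV} to the single-set perimeters $\per_\Omega^{s_k}(E_i^k)$, use transversality to $\partial\Omega^+$ to discard the collar contribution $\per_{\Omega_\delta}(E_i)$ as $\delta\searrow0$, and finish by a diagonal extraction. This matches the paper's strategy.

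However, there is a genuine gap in the construction of the approximating clusters. You propose to mollify $E_{\pm1}$ inside $\Omega$ via \cite{Modica}*{Lemma~1} ``leaving them untouched on a neighbourhood of $\Omega^c$,'' simultaneously building in the $\eps$-separation and keeping everything admissible with exactly the prescribed exterior trace. But Modica's Lemma~1, as used in Proposition~\ref{prop:s-limsup}, does \emph{not} preserve the exterior datum; it approximates a single set globally. Gluing a mollified interior to an unchanged (Lipschitz polyhedral, not smooth) exterior across a collar, keeping smooth boundary in $\Omega$, preserving the 3-partition, converging in perimeter, \emph{and} maintaining a quantitative separation between $E_{-1}$ and $E_1$ inside $\overline\Omega$, is not an ``obvious'' modification --- it is precisely the technical content you would have to prove, and you do not. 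The paper sidesteps this by factoring the approximation into two genuinely separate steps with different parameters: first it invokes \cite{Cesaroni-Novaga}*{Theorem~2.9} (a polyhedral-approximation theorem for clusters that by design preserves the exterior trace and gives transversality to $\partial\Omega$) to produce $\E^\eta$, polyhedral in $\Omega\cup\Omega_{\delta_0}$ with $\F^*(\E^\eta)\to\F^*(\E)$; only \emph{then}, on the already-polyhedral cluster, does it perform the simple $\eps$-strip covering $A^\eps = \{x\in\Omega:\dist(x,\partial E_1^\eta\cap\partial E_{-1}^\eta)<\eps\}$ to separate the extreme phases, producing a three-fold limit $\lim_\eta\lim_\eps\limsup_k$. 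Without an analogue of Cesaroni--Novaga's theorem (or an equivalent hand-built construction, which you would need to carry out), your single-parameter construction does not close; in particular, if $\E$ itself is merely of finite perimeter in $\Omega$ and not Lipschitz there, you cannot apply Theorem~\ref{thm:CV} to $E_i$ directly, which is exactly why an intermediate polyhedral/Lipschitz approximant with matching exterior data is needed.

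One minor point: you write that the exterior polyhedral sets are ``already smooth''; polyhedral sets are Lipschitz, not smooth, though Lipschitz regularity is in fact all that Theorem~\ref{thm:CV} requires.
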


\begin{proof}
We may assume that $\F^*(\E)<\infty$; otherwise there is nothing to show. In particular, $\per_{\Omega}(E_i)<\infty$ for each $-1\leq i \leq 1$. 

Fix $\eta>0$. 
By~\cite{Cesaroni-Novaga}*{Theorem 2.9} (see also~\cite{BCG}), there exists admissible $\E^\eta$ which is polyhedral in $\Omega \cup \Omega_{\delta_0}$, transversal to $\partial \Omega$, such that $E_i^\eta \cap \Omega^c = E_i \cap \Omega^c$ for each $-1 \leq i \leq 1$, $\E^\eta \to \E$ in~$L^1(\R^n)$ as $\eta \searrow 0$, and 
\[
\lim_{\eta \searrow 0} \F^*(\E^\eta) =\F^*(\E). 
\]
(Note that the proof of~\cite{Cesaroni-Novaga}*{Theorem 2.9} only uses that $\E$ is polyhedral in $\Omega_{\delta_0}^+$ for some $\delta_0>0$.)

Now let $\eps>0$ and define
\[
A^\eps := \big\{x \in \Omega~\hbox{s.t.}~\dist(x, \partial E_1^\eta \cap \partial E_{-1}^\eta) < \eps\big\}. 
\]
Define also
\[
E_1^{\eta,\eps} := E_1^\eta \cap (A^\eps)^c, \quad
E_{-1}^{\eta,\eps} := E_{-1}^\eta \cap (A^\eps)^c, \quad
E_{0}^{\eta,\eps} := E_{0}^\eta \cup A^\eps.
\]
For $\eps>0$ small, the admissible $\E^{\eta,\eps}$ is such that $E_i^{\eps, \eta}$ is Lipschitz in $\Omega \cup \Omega_{\delta_0}$ and 
$E_i^{\eps,\eta} \cap \Omega^c = E_i \cap \Omega^c$ for each $-1 \leq i \leq 1$, 
and $\E^{\eta,\eps} \to \E^\eta$ in $L^1_{\text{loc}}(\R^n)$ as $\eps \searrow 0$,  and
\[
\lim_{\eta \searrow 0}\,  \lim_{\eps \searrow 0} \F^*(\E^{\eta,\eps}) =\F^*(\E). 
\]

By Theorem~\ref{thm:CV}, we have, for all $\delta \in (0,\delta_0)$ and $i \in \{-1,1\}$,
\[
\lim_{\eps \searrow 0} \,\lim_{k \to \infty} \bigg|\omega_{n-1} \per_\Omega(E_i^{\eta,\eps})-(1-s_k) \per_{\Omega}^{s_k}(E_i^{\eta,\eps})\bigg| 
\leq \lim_{\eps \searrow 0} 2 \per_{\Omega_\delta}(E_i^{\eps, \eta})
=  2 \per_{\Omega_\delta}(E_i^{\eta}).
\]
Sending $\delta \searrow 0$ and using that $E_i^\eta$ is transversal to $\partial \Omega$, we have
\[
\lim_{\eta \searrow 0}\, \lim_{\eps \searrow 0} \, \lim_{k \to \infty} \bigg|\omega_{n-1} \per_\Omega(E_i^{\eta,\eps})-(1-s_k) \per_{\Omega}^{s_k}(E_i^{\eta,\eps})\bigg|  = 0. 
\]
By construction, we also have that
\[
\dist(E_1^{\eta,\eps} \cap \Omega, E_{-1}^{\eta,\eps})\geq \eps \quad \hbox{and}  \quad
\dist(E_{-1}^{\eta,\eps} \cap \Omega, E_{1}^{\eta,\eps}) \geq \eps,
\]
so, by Lemma~\ref{lem:separated}, we get
\[
\lim_{k \to \infty}(1-s_k)\per_{\Omega}^{s_k}(E_1^{\eta,\eps}, E_{-1}^{\eta,\eps}) = 0. 
\]
Therefore, using~\eqref{eq:Fs*}, we obtain
\begin{align*}
 \omega_{n-1}\F^*(\E)
	&= \lim_{\eta \searrow 0}\, \lim_{\eps \searrow 0} \sum_{i \in \{-1,1\}} \alpha_i^* \omega_{n-1} \per_\Omega(E_i^{\eta,\eps})\\
	&\geq \lim_{\eta \searrow 0}\, \lim_{\eps \searrow 0} \, \limsup_{k \to\infty}(1-s_k)\sum_{i \in \{-1,1\}} \alpha_i^*   \per_\Omega^{s_k}(E_i^{\eta,\eps})\\
	&= \lim_{\eta \searrow 0}\, \lim_{\eps \searrow 0}\,  \limsup_{k \to\infty} (1-s_k) \left[ F^{s_k}(\E^{\eta,\eps}) + 2\alpha_0 \per_\Omega^{s_k}(E_{-1}^{\eta,\eps}, E_{1}^{\eta,\eps})\right]\\
	&= \lim_{\eta \searrow 0}\, \lim_{\eps \searrow 0} \, \limsup_{k \to\infty} (1-s_k) F^{s_k}(\E^{\eta,\eps}),
\end{align*}
and the proposition holds by extracting infinitesimal sequences $\eta = \eta(k)$ and $\eps = \eps(k)$ and defining $\E^k := \E^{\eta(k), \eps(k)}$. 
\end{proof}

\subsection{Proof of Theorem~\ref{thm:s-gamma}}

The result follows directly from Propositions~\ref{prop:s-liminf} and~\ref{prop:s-limsup}. \hfill \qed

\subsection{Proof of Theorem~\ref{thm:localmin}}

First note that Propositions ~\ref{prop:bdry-liminf} and~\ref{prop:bdry-limsup} together prove that $(1-s) \F^s$ $\Gamma$-converges to $\omega_{n-1} \F^*$ where the functionals $\F^s$ and $\F^*$ are defined only on admissible~$\E$ such that $E_i \cap \Omega^c = \widetilde{E}_i \cap \Omega^c$ for each $-1 \leq i \leq 1$. 

\medskip

We need to show that sequences $\F^s(\E^s)$ of minimizers are equibounded.

\begin{lemma}\label{lem:min-sequences}
Let~$\Omega \subset \R^n$ be a bounded domain with Lipschitz boundary and assume that $\alpha_0\leq0$. 
Let~$s_k \nearrow 1$ as $k \to \infty$.
Assume that $\E^k$ are local minimizers of $\F^{s_k}$ in~$\Omega$ and $\E^k \to \E$ in $L^1_{\text{loc}}(\R^n)$ as $k \to \infty$. Then
\begin{equation}\label{eq:finiteenergy}
\limsup_{k \to \infty} (1-s_k) \F^{s_k}(\E^k, \Omega') <\infty \quad \hbox{for all}~ \Omega' \Subset \Omega.
\end{equation}
\end{lemma}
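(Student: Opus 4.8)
~\textbf{Overview of the strategy.}
The plan is to exploit the local minimality of $\E^k$ by comparing against a fixed competitor with the same exterior data, and then to show that the energy of this competitor, after renormalization by $(1-s_k)$, stays bounded as $s_k \nearrow 1$. A convenient choice of competitor is one that is polyhedral (or at least Lipschitz) near $\partial\Omega'$ and agrees with $\E^k$ outside $\Omega'$; for such a competitor, Corollary~\ref{cor:uniform} (or Theorem~\ref{thm:CVAB} applied to each pair) gives a uniform-in-$s$ bound on $(1-s)\F^s$.

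\textbf{Step 1: reduce to a single competitor.}
Fix $\Omega' \Subset \Omega$ and pick an intermediate domain $\Omega' \Subset \Omega'' \Subset \Omega$. Using the local minimality of $\E^k$ in $\Omega$ together with Lemma~\ref{lem:localmin}, it suffices to produce \emph{one} admissible competitor $\mathbf{F}^k$ with $F_i^k \cap (\Omega'')^c = E_i^k \cap (\Omega'')^c$ for each $i$, and to bound $(1-s_k)\F^{s_k}(\mathbf{F}^k,\Omega'')$. Since $\E^k \to \E$ in $L^1_{\text{loc}}$, one natural choice is to glue a fixed smooth (or polyhedral) cluster $\G$ inside a slightly smaller ball and interpolate in the annulus; however, the cleanest route is to take $\mathbf{F}^k$ to equal $\E^k$ outside $\Omega''$ and to equal a \emph{single, $k$-independent} admissible cluster $\G$ that is Lipschitz in $\overline{\Omega''}$ on $\Omega''$ — but this requires $\G$ to match $\E^k$ on $\partial\Omega''$, which fails for varying $k$. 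To avoid this, instead fill $\Omega'' \setminus \Omega'$ using the values of $\E^k$ there and fill $\Omega'$ with, say, the pure phase $E_0$ (i.e. set $F_0^k \supset \Omega'$); then $\mathbf{F}^k$ is admissible, agrees with $\E^k$ outside $\Omega''$, and on $\Omega''$ its interfaces are contained in $\partial\Omega' \cup (\partial E_0^k \cap (\Omega''\setminus\overline{\Omega'}))$.

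\textbf{Step 2: bound the competitor energy uniformly in $s$.}
Write $\F^{s_k}(\mathbf{F}^k,\Omega'') = \sum_{i<j}\sigma_{i,j}\per_{\Omega''}^{s_k}(F_i^k,F_j^k)$ and split each $\per^{s}$ into the genuinely local double integral over $\Omega'' \times \Omega''$ plus the two cross terms with $(\Omega'')^c$; the cross terms are $O(1/s)$ uniformly (hence $(1-s)$ times them $\to 0$) by the argument in Lemma~\ref{lem:separated} applied on the region away from the interface, combined with the fact that the part of $\partial F_i^k$ inside $\Omega''$ is uniformly bounded since it lies in $\partial\Omega'$ (a fixed smooth surface) plus the part of $\partial E_0^k$ in the annulus. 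The remaining difficulty is that $\partial E_0^k$ in the annulus $\Omega''\setminus\overline{\Omega'}$ is not a priori regular. This is handled by a further localized modification: replace $\E^k$ inside the annulus as well, e.g. interpolate radially between the pure phase $E_0$ on $\partial\Omega'$ and $\E^k$ on $\partial\Omega''$ using finitely many spherical shells on each of which the cluster is constant — the number of shells and their radii depend only on $n$ and $\Omega',\Omega''$, not on $k$. Then every interface of $\mathbf{F}^k$ inside $\Omega''$ lies on one of finitely many fixed spheres (Lipschitz with $k$-independent norm), so Corollary~\ref{cor:uniform} (with the role of $\Omega$ played by $\Omega''$, and $\Omega_\delta$ chosen with $\delta$ small and fixed) yields
\[
(1-s_k)\F^{s_k}(\mathbf{F}^k,\Omega'') \le \frac{\nu(n,s_k)}{s_k}\Big(\F^1(\mathbf{F}^k,\Omega'') + 2\F^1(\mathbf{F}^k,(\Omega'')_\delta) + \eta\Big) + (1-s_k)K_{\eta,\delta},
\]
and the right-hand side is bounded uniformly in $k$ because $\F^1(\mathbf{F}^k,\cdot)$ is controlled by a fixed multiple of the total $\mathcal H^{n-1}$-measure of the finitely many fixed spheres.

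\textbf{Step 3: conclude.}
By local minimality of $\E^k$ in $\Omega$ and Lemma~\ref{lem:localmin},
\[
\F^{s_k}(\E^k,\Omega'') \le \F^{s_k}(\mathbf{F}^k,\Omega''),
\]
hence $(1-s_k)\F^{s_k}(\E^k,\Omega'') $ is bounded above; since $\Omega' \Subset \Omega''$, a final application of Lemma~\ref{lem:localmin} (or simple monotonicity $\per^{s}_{\Omega'} \le \per^s_{\Omega''}$ up to the easily-controlled difference terms from Lemma~\ref{lem:omegaomega}) gives $\limsup_k (1-s_k)\F^{s_k}(\E^k,\Omega') < \infty$, which is~\eqref{eq:finiteenergy}. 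The main obstacle is Step 2: arranging a $k$-independent, Lipschitz-regular modification of $\E^k$ inside $\Omega''$ that still has the prescribed exterior data, so that the uniform estimate of Corollary~\ref{cor:uniform} applies with constants independent of $k$; the radial multi-shell construction is the device that resolves it.
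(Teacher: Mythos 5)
Your high-level idea — compare $\E^k$ against a competitor that fills the inner region with pure phase $0$ and exploit the uniform bounds as $s\nearrow1$ — is the right one, and it is essentially what the paper does (with $\Omega\setminus\Omega_\delta$ as the comparison domain instead of your $\Omega'\Subset\Omega''$). However, Step~2 has a genuine gap, and the device you propose to close it does not work.

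The issue: your competitor must agree with $\E^k$ outside $\Omega''$, so its interfaces near $\partial\Omega''$ inherit the unknown, possibly very irregular, boundaries of $\E^k$. Corollary~\ref{cor:uniform} requires the interfaces $\partial F_i^k\cap(\Omega''\cup(\Omega'')_\delta)$ to be Lipschitz with $k$-independent norm, and this includes the outer strip $(\Omega'')_\delta^+$ where you cannot modify the cluster. Your ``radial multi-shell interpolation'' does not resolve this: (i) a $3$-cluster is a partition, not a scalar function, so one cannot ``interpolate between phase $0$ on $\partial\Omega'$ and $\E^k$ on $\partial\Omega''$'' by placing constant phases on shells — the transition at the outermost shell/$\partial\Omega''$ would simply reproduce the wild interface of $\E^k$; (ii) even if all interior interfaces lay on fixed spheres, the hypotheses of Corollary~\ref{cor:uniform} still fail on the outer buffer strip. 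Your claim that the cross-terms with $(\Omega'')^c$ are ``$O(1/s)$ uniformly'' is also incorrect: those integrals are singular along $\partial\Omega''$ and are controlled by classical perimeters, not by a term that vanishes after multiplication by $(1-s)$.

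What actually makes the argument go through — and this is what you missed — is the hypothesis $\alpha_0\leq0$. Writing the competitor's energy in the $\alpha$-form \eqref{eq:alpha-energy}, you can simply \emph{drop} the $\alpha_0\per^s(F_0^k)$ term since it is nonpositive. You are then left with $\sum_{i=\pm1}\alpha_i\per^{s}_{\Omega\setminus\Omega_\delta}(F_i^k)$, where $F_i^k:=E_i^k\cap(\Omega^c\cup\Omega_\delta)$ for $i=\pm1$ are \emph{empty} inside the comparison domain $\Omega\setminus\Omega_\delta$. Hence $\per^{s}_{\Omega\setminus\Omega_\delta}(F_i^k)$ reduces to a single cross-term $L(F_i^k,\Omega\setminus\Omega_\delta)\leq L\bigl((\Omega\setminus\Omega_\delta)^c,\Omega\setminus\Omega_\delta\bigr)$, which after multiplying by $(1-s_k)$ converges to a multiple of the classical perimeter of $\Omega\setminus\Omega_\delta$ — a fixed finite quantity for a.e.\ $\delta$. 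No regularity of $\E^k$ and no Corollary~\ref{cor:uniform} are needed. The phase-$0$ set $F_0^k$, whose boundary in the strip $\Omega_\delta^-$ is as wild as that of $\E^k$, never needs to be controlled precisely because $\alpha_0\leq0$. Your proposal never exploits this sign condition, which is why you are forced into the unworkable regularization step.
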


\begin{proof}
Let~$\delta>0$ be small. 
Define the sequence of admissible $\mathbf{F}^k$ by
\[
F_i^k= E_i^k \cap (\Omega^c \cup \Omega_\delta)~\hbox{for}~i \in \{-1,1\} 
\quad \hbox{and}\quad
F_0^k= \R^n \setminus (F_{-1}^k \cup F_1^k).
\]
Using that $\E^k$ is a local minimizer, Lemma~\ref{lem:localmin}, and that $\alpha_0\leq0$, we estimate
\[
\F^{s_k}(\E^k, \Omega \setminus \Omega_\delta)
	\leq \F^{s_k}(\mathbf{F}^k, \Omega \setminus \Omega_\delta)
	\leq \sum_{i \in \{-1,1\}} \alpha_i \per_{\Omega \setminus \Omega_\delta}^{s_k}(F_i^k).
\]
With Lemma~\ref{lem:omegaomega}, we proceed as in the proof of~\cite{ADPM}*{Theorem 3} to find that
\begin{align*}
&\limsup_{k \to \infty} (1-s_k) \F^{s_k}(\E^k, \Omega \setminus \Omega_\delta^-) 
\leq \sum_{i \in \{-1,1\}} \alpha_i \limsup_{k \to \infty} (1-s_k)\per_{\Omega \setminus \Omega_\delta^-}^{s_k}(F_i^k) \\
&\qquad \leq \sum_{i \in \{-1,1\}} \alpha_i \limsup_{k \to \infty} (1-s_k)\per_{\Omega}^{s_k}(F_i^k) 
	<\infty.
\end{align*}
Therefore,~\eqref{eq:finiteenergy} holds for $\Omega' \subset \Omega \setminus \Omega_\delta$ and, consequently, for any $\Omega' \Subset \Omega$. 
\end{proof}

\begin{proof}[Proof of Theorem~\ref{thm:localmin}]
By Lemmata~\ref{lem:min-sequences} and~\ref{lem:coercivity}, up to a subsequence, 
there exists an admissible~$\E$ such that $\F^*(\E)<\infty$, 
$E_i \cap \Omega^c = \widetilde{E}_i \cap \Omega^c$ for each $-1 \leq i \leq 1$, and 
$\E^k \to \E$ in $L^1_{\text{loc}}(\R^n)$ as $k \to \infty$. 

Let~$\mathbf{F}$ be such that $F_i \cap \Omega^c = \widetilde{E}_i \cap \Omega^c$. By Propositions~\ref{prop:bdry-liminf} and~\ref{prop:bdry-limsup}, there are admissible $\mathbf{F}^{k}$ 
such that $F_i \cap \Omega^c = \widetilde{E}_i \cap \Omega^c$ for each $-1 \leq i \leq 1$, $\mathbf{F}^k \to \mathbf{F}$ in $L^1_{\text{loc}}(\R^n)$ as $k \to \infty$, and
\[
\lim_{k \to \infty}(1-s_k) \F^{s_k}(\mathbf{F}^{k}) = \omega_{n-1} \F^*(\mathbf{F}).
\]
By Proposition~\ref{prop:bdry-liminf} and the minimality of $\E^k$, it follows that
\[
\omega_{n-1} \F^*(\E) 
\leq \liminf_{k \to \infty}(1-s_k)\F^{s_k}(\E^k) 
\leq \lim_{k \to \infty}(1-s_k) \F^{s_k}(\mathbf{F}^{k}) = \omega_{n-1} \F^*(\mathbf{F}).
\]
In particular, $\E$ is a minimizer of $\F^*$. Taking $\mathbf{F} = \E$ gives~\eqref{eq:recoverenergy}. 
\end{proof}

\begin{appendix}
\section{Analysis in one-dimension}\label{appendix:1D}

We record and prove the one-dimensional counterparts of Theorems~\ref{thm:reduce}, \ref{thm:CV}, and~\ref{thm:CVAB}. 
Moreover, we prove Theorem~\ref{thm:s-gamma}  up to the boundary of $\Omega \subset \R$. 

\subsection{Reducing the nonlocal energy in 1D}

First, we prove a one-dimensional version of Theorem~\ref{thm:reduce}.

\begin{theorem}\label{thm:reduce1D}
Suppose that $\alpha_0<0$ and that $\Omega \subset \R$ is a bounded domain with $0 \in \Omega$. For~$r>0$, consider an admissible~$\E$ with
\[
0 \in \partial E_1 \cap \partial E_{-1} \cap \Omega, \quad
(0,r)  \subset E_1, \quad (-r,0) \subset E_{-1}.
\]

Then, there are $C_0 = C_0(s) >0$ and $\eps_0 = \eps_0(s,r,\dist(0,\partial\Omega),\sigma_{i,j})>0$
such that, for all~$\eps \in (0,\eps_0)$, there is an admissible~$\E^\eps$ satisfying 
\[
\F^s(\E) - \F^s(\E^\eps) \geq C_0|\alpha_0| \eps^{1-s}. 
\]
\end{theorem}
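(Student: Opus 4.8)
The plan is to imitate the proof of Theorem~\ref{thm:reduce}: cover the interface point $0$ by a short interval of phase~$0$. Since $n=1$, no geometric straightening is needed and every kernel integrates explicitly. Fix $\eps\in\big(0,\min\{r,\dist(0,\partial\Omega)\}\big)$, set $A^\eps:=(0,\eps)$ (which lies in $E_1\cap\Omega$, since $(0,r)\subset E_1$ and $0\in\Omega$), and define the admissible competitor by
\[
E_1^\eps:=E_1\cap(A^\eps)^c,\qquad E_0^\eps:=E_0\cup A^\eps,\qquad E_{-1}^\eps:=E_{-1}.
\]
Corollary~\ref{cor:one-sided-strip}, together with $\alpha_0<0$, gives
\[
\F^s(\E)-\F^s(\E^\eps)=2|\alpha_0|\,L(A^\eps,E_{-1})-\sigma_{0,1}\big[L(A^\eps,E_1\cap(A^\eps)^c)-L(A^\eps,E_1^c)\big],
\]
so it suffices to bound $L(A^\eps,E_{-1})$ from below by a constant times $\eps^{1-s}$ and the bracketed expression from above by a constant times $\eps$.

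For the lower bound I would keep only the interactions of $A^\eps$ with $(-\eps,0)\subset(-r,0)\subset E_{-1}$ and compute directly,
\[
L(A^\eps,E_{-1})\ \geq\ \int_0^\eps\!\!\int_{-\eps}^0\frac{dy\,dx}{(x-y)^{1+s}}\ =\ \frac{2-2^{1-s}}{s(1-s)}\,\eps^{1-s}\ =:\ C_0(s)\,\eps^{1-s},
\]
with $C_0(s)>0$ for $s\in(0,1)$. For the upper bound, observe that up to a null set $E_1\cap(A^\eps)^c\subset\R\setminus(-r,\eps)=(-\infty,-r]\cup[\eps,\infty)$ — the interval $(-r,0)$ belongs to $E_{-1}$, hence is disjoint from $E_1$, and $(0,\eps)=A^\eps$ has been removed — while $E_1^c\supset(-r,0)$. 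Therefore
\[
L(A^\eps,E_1\cap(A^\eps)^c)-L(A^\eps,E_1^c)\ \leq\ L(A^\eps,(-\infty,-r])+\big[L(A^\eps,[\eps,\infty))-L(A^\eps,(-r,0))\big].
\]
The first term involves only distances $\geq r$ and is bounded by $Cr^{-s}\eps$; for the second, a direct computation shows that, although $L(A^\eps,[\eps,\infty))$ and $L(A^\eps,(-r,0))$ are each of order $\eps^{1-s}$, their leading singularities cancel and
\[
L(A^\eps,[\eps,\infty))-L(A^\eps,(-r,0))\ =\ \frac{(\eps+r)^{1-s}-r^{1-s}}{s(1-s)}\ \leq\ \frac{r^{-s}}{s}\,\eps
\]
by the mean value theorem. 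Hence the bracketed expression is $\leq C_1(s,r)\,\eps$.

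Combining the two bounds,
\[
\F^s(\E)-\F^s(\E^\eps)\ \geq\ 2|\alpha_0|\,C_0\,\eps^{1-s}-\sigma_{0,1}\,C_1\,\eps\ \geq\ |\alpha_0|\,C_0\,\eps^{1-s}
\]
provided in addition $\eps^s<|\alpha_0|C_0/(\sigma_{0,1}C_1)$, so the claim holds with $C_0=C_0(s)$ as above and
\[
\eps_0:=\min\Big\{r,\ \dist(0,\partial\Omega),\ \big(|\alpha_0|C_0/(\sigma_{0,1}C_1)\big)^{1/s}\Big\}.
\]
The only delicate point — exactly as for the estimate of $I_2-I_3$ in the proof of Theorem~\ref{thm:reduce} — is the cancellation in the last display: a term-by-term bound of $L(A^\eps,E_1\cap(A^\eps)^c)$ alone yields only $O(\eps^{1-s})$, i.e.\ the same order as the energy gain, so the cancellation between the contribution of $E_1$ just above $A^\eps$ and the contribution of $E_{-1}$ just below it is essential. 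Everything else is a routine one-dimensional integration.
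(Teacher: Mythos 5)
Your proposal is correct and follows essentially the same route as the paper's proof: same competitor $A^\eps=(0,\eps)$, same application of Corollary~\ref{cor:one-sided-strip}, the same explicit constant $C_0(s)=(2-2^{1-s})/(s(1-s))$ from the $(0,\eps)\times(-\eps,0)$ interaction, and the same crucial cancellation between $L(A^\eps,[\eps,\infty))$ and $L(A^\eps,(-r,0))$. The only cosmetic difference is that the paper realizes this cancellation by the reflection $x\mapsto-x+\eps$, $y\mapsto-y+\eps$ before bounding, whereas you compute both integrals explicitly and subtract; the resulting $O(\eps)$ bound and the choice of $\eps_0$ are identical in substance.
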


\begin{proof}
For $0 < \eps < \min\{\dist(0,\partial \Omega), r\}$, define the set $A^\eps = (0,\eps) \subset \Omega$ and the admissible $\E^\eps$ by 
\[
E_1^\eps = E_1 \cap (A^\eps)^c, \quad E_0 = E_0 \cup A^\eps, \quad E_{-1}^\eps = E_{-1}. 
\]
By Corollary~\ref{cor:one-sided-strip}, it holds that
\begin{equation}\label{eq:1Ddiff}
\F^s(\E) - \F^s(\E^\eps)
	= 2|\alpha_0|L(A^\eps, E_{-1})
		- \sigma_{0,1} \left[L(A^\eps, E_1 \cap (A^\eps)^c)- L(A^\eps, E_{1}^c)\right]. 
\end{equation}

Let us first estimate $L(A^\eps, E_{-1})$ from below. Since $(-\eps,0) \subset (-r,0) \subset E_{-1}$, we find that
\begin{equation}\label{eq:1Dbelow}
\begin{aligned}
& L(A^\eps, E_{-1})
\geq \int_0^\eps \int_{-\eps}^0 (x-y)^{-1-s} dydx\\
&\qquad= \frac{1}{s} \int_0^\eps \left[ x^{-s} - (x+\eps)^{-s}\right] dx 
	= \frac{2- 2^{1-s}}{s(1-s)} \eps^{1-s} =: C_0 \eps^{1-s}. 
\end{aligned}
\end{equation}

Next, we estimate $L(A^\eps, E_1 \cap (A^\eps)^c)-L(A^\eps, E_{1}^c)$ from above. Note that 
\[
E_1 \cap (A^\eps)^c \subset [(-\infty,-r) \cup (\eps,\infty)] \quad \hbox{and} \quad
E_{1}^c = E_0 \cup E_{-1} \supset (-r,0). 
\] 
Thus, we have
\begin{align*}
&L(A^\eps, E_1 \cap (A^\eps)^c)-L(A^\eps, E_{1}^c)\\
	&\qquad\leq \int_0^\eps \left[ \int_{-\infty}^{-r} \frac{1}{|x-y|^{1+s}} dy + \int_\eps^\infty \frac{1}{|x-y|^{1+s}} dy \right] dx
		- \int_0^\eps \int_{-r}^0 \frac{1}{|x-y|^{1+s}} dydx. 
\end{align*}
In the last integral, we make the change of variables $x \mapsto -x+\eps$ and $y \mapsto -y+\eps$ to obtain
\begin{equation}\label{eq:1Dabove}
\begin{aligned}
&L(A^\eps, E_1 \cap (A^\eps)^c)-L(A^\eps, E_{1}^c)\\
	&\leq  \int_0^\eps \left[ \int_{-\infty}^{-r} \frac{1}{|x-y|^{1+s}} dy + \int_\eps^\infty \frac{1}{|x-y|^{1+s}} dy \right] dx
		- \int_0^\eps \int_\eps^{r+\eps} \frac{1}{|x-y|^{1+s}} dydx\\
	&=  \int_0^\eps \left[ \int_{-\infty}^{-r} \frac{1}{|x-y|^{1+s}} dy + \int_{r+\eps}^\infty \frac{1}{|x-y|^{1+s}} dy \right] dx\\
	&\leq \int_0^\eps \int_{\{|x-y|>\frac{r}{2}\}} \frac{1}{|x-y|^{1+s}} dydx 
	= \frac{2^{1+s}}{s r^s}\eps =: C_1 \eps. 
\end{aligned}
\end{equation}

Therefore, using the estimates~\eqref{eq:1Dbelow} and~\eqref{eq:1Dabove} in~\eqref{eq:1Ddiff}, the result follows.  
\end{proof}

\begin{remark}
In the setting of Theorem~\ref{thm:reduce1D} with $r = +\infty$, one can show explicitly that the admissible $\E^\eps$ in the proof satisfy
\[
\F^s(\E) - \F^s(\E^\eps) = \frac{2|\alpha_0|}{s(1-s)} \eps^{1-s}.
\]
\end{remark}

\subsection{Uniform estimates in 1D}

Next, we establish Theorems~\ref{thm:CV} and~\ref{thm:CVAB} in dimension one. Let us begin with a uniform estimate for the fractional perimeter functional. 

\begin{theorem}\label{thm:CV1D}
Assume that $\Omega = (a,b) \subset \R$ is a bounded, open interval. 
Let~$E \subset \R$ be such that $\per_{\Omega \cup \Omega_{\delta_0}}(E)<\infty$ for some small $\delta_0 \in (0,|\Omega|/4)$. 
Set
\[
r := \frac12\min\big\{|x-y|~\hbox{s.t.}~x,y \in (\partial E \cap  \Omega) \cup \partial \Omega,~x\not=y\big\}.
\]
 
Then, there exists $K>0$, depending on $r$, $\delta_0$,  $\dist(a, \partial E)$, $\dist(b, \partial E)$ and  the size of $\Omega$, such that 
\[
\left| s(1-s) \per_\Omega^s(E) - \per_{\overline{\Omega}}(E)\right|
\leq |(2-2^{1-s})r^{1-s}-1|\per_\Omega(E) + (1-s)K. 
\]
Consequently, 
\[
\lim_{ s \nearrow 1} s(1-s) \per_\Omega^s(E)= \per_{\overline{\Omega}}(E).
\]
\end{theorem}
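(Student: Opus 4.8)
The plan is to reduce the statement to one explicit one-dimensional computation — the near-diagonal interaction across a single jump point — and to check that every other interaction contributes at most $O(1/s)$, which is killed by the factor $(1-s)$. The quantity $(2-2^{1-s})r^{1-s}$ will appear as the exact $s(1-s)$-normalized energy of one ``localized'' jump, so the error coefficient $|(2-2^{1-s})r^{1-s}-1|$ is built in rather than being a soft $o(1)$.

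First I would fix the geometry. Since $\per_{\Omega\cup\Omega_{\delta_0}}(E)<\infty$, the essential boundary $\partial^*E$ has finitely many points near $\overline{\Omega}$; denoting by $a<p_1<\cdots<p_m<b$ those lying in $\Omega=(a,b)$, one has $\per_\Omega(E)=m$, and by definition of $r$ the points of $\{a,p_1,\dots,p_m,b\}$ are pairwise at distance $\ge 2r$. Moreover the hypothesis forces $d_a:=\dist(a,\partial E)>0$ and $d_b:=\dist(b,\partial E)>0$ (if either vanished the asserted constant $K$ would be infinite and there is nothing to prove); in particular $\partial^*E\cap\overline{\Omega}=\{p_1,\dots,p_m\}$, so $\per_{\overline{\Omega}}(E)=\per_\Omega(E)=m$, and $E$ is constant on $(a-d_a,a+d_a)$ and on $(b-d_b,b+d_b)$. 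Writing $J_0=(a,p_1),\dots,J_m=(p_m,b)$, the function $\chi_E$ is constant on each $J_k$ with consecutive values distinct, so $E\cap\Omega$ and $E^c\cap\Omega$ are unions of alternating $J_k$'s, each of length $\ge 2r$.

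Next comes the main term. For each $k\in\{1,\dots,m\}$ the adjacent intervals $J_{k-1}$ and $J_k$ sit on opposite sides of $p_k$ and have opposite $E$-membership, so $L(J_{k-1},J_k)$ is one of the summands of $L(E\cap\Omega,E^c\cap\Omega)$; as these $m$ summands are distinct and all terms are nonnegative, $L(E\cap\Omega,E^c\cap\Omega)\ge\sum_{k=1}^m L(J_{k-1},J_k)$. Since $J_{k-1}\supseteq(p_k-r,p_k)$ and $J_k\supseteq(p_k,p_k+r)$, the change of variables $u=p_k-x,\ v=y-p_k$ gives the elementary identity
\[
L\big((p_k-r,p_k),(p_k,p_k+r)\big)=\int_0^r\!\!\int_0^r\frac{du\,dv}{(u+v)^{1+s}}=\frac{(2-2^{1-s})\,r^{1-s}}{s(1-s)},
\]
hence $L(J_{k-1},J_k)\ge\frac{(2-2^{1-s})r^{1-s}}{s(1-s)}$ and therefore $\per_\Omega^s(E)\ge\frac{m\,(2-2^{1-s})r^{1-s}}{s(1-s)}$. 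For the matching upper bound I would show that everything beyond these $m$ near-diagonal blocks is $\le C/s$, uniformly for $s$ bounded away from $0$, with $C$ depending only on $|\Omega|,r,d_a,d_b$ (using $m\le|\Omega|/(2r)$): (i) $L(J_{k-1},J_k)-L((p_k-r,p_k),(p_k,p_k+r))$ is an integral over a region on which $|x-y|\ge r$, so it is $\le 2|\Omega|/(s r^s)$; (ii) each non-adjacent pair has $\dist(J_i,J_j)\ge 2r$, contributing $\le 2|\Omega|/(s(2r)^s)$, and there are at most $(m+1)^2$ pairs; (iii) for the cross terms $L(E\cap\Omega,E^c\cap\Omega^c)+L(E\cap\Omega^c,E^c\cap\Omega)$, the constancy of $E$ across $\partial\Omega$ forces every contributing pair near $a$ (resp. $b$) to satisfy $|x-y|\ge d_a$ (resp. $|x-y|\ge d_b$), which after integrating the $y$-variable yields again a bound $\le C(|\Omega|,d_a,d_b)/s$. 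Collecting the pieces gives
\[
\frac{m\,(2-2^{1-s})r^{1-s}}{s(1-s)}\ \le\ \per_\Omega^s(E)\ \le\ \frac{m\,(2-2^{1-s})r^{1-s}}{s(1-s)}+\frac{K}{s}.
\]

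Finally, multiplying through by $s(1-s)$ and invoking the triangle inequality together with $\per_{\overline{\Omega}}(E)=m$ produces the claimed estimate, and the limit statement follows since $(2-2^{1-s})r^{1-s}\to 1$ and $(1-s)K\to 0$ as $s\nearrow1$. The one genuinely delicate point is the cross-term bound (iii): the crude estimate $L(E\cap\Omega,E^c\cap\Omega^c)\le L(\Omega,\Omega^c)$ is useless, since it carries a $\tfrac1{1-s}$ singularity coming from $\partial\Omega$ itself; the saving is precisely that $E$ does not jump across $\partial\Omega$ (this is where $d_a,d_b>0$ enters), which turns a would-be term $\rho^{1-s}$ into a difference $\rho_1^{1-s}-\rho_2^{1-s}=O(1-s)$ — the one-dimensional analogue of the boundary term $2\per_{\Omega_\delta}(E)$ in Theorem~\ref{thm:CV}.
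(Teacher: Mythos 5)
Your treatment of the interior interactions and of the cross-terms when $\partial E$ stays away from $\partial\Omega$ follows the paper's strategy closely: isolate the near-diagonal blocks, compute the normalized energy of one block as $(2-2^{1-s})r^{1-s}/(s(1-s))$, and show everything else is $O(1/s)$. However, there is a genuine gap in the way you dispose of the boundary: you assert that the hypotheses force $\dist(a,\partial E)>0$ and $\dist(b,\partial E)>0$, and alternatively that if one of them vanishes then $K$ is infinite and there is nothing to prove. Both claims are false. The hypothesis $\per_{\Omega\cup\Omega_{\delta_0}}(E)<\infty$ does not prevent $a\in\partial E$ (take $E=(a,a+r')$ for small $r'>0$: this has finite perimeter and $\dist(a,\partial E)=0$). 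And in that case the asserted constant $K$ is \emph{not} infinite: the theorem allows $K$ to depend on $\dist(a,\partial E)$, but it still asserts the existence of a finite $K$, and the paper's own proof (its Case~3) produces $K_2=2$ for $\rho_a=0$, independent of $\rho_a$.

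This is not an edge case you can wave away: it is precisely the configuration that produces the boundary contribution distinguishing $\per_{\overline{\Omega}}(E)$ from $\per_\Omega(E)$. When $a\in\partial E$ one has $\per_{\overline{\Omega}}(E)=\per_\Omega(E)+1$, so the cross-interaction $s(1-s)\big[L(E\cap\Omega,E^c\cap\Omega^c)+L(E\cap\Omega^c,E^c\cap\Omega)\big]$ localized near $a$ must converge to $1$, not to $0$; the mechanism that makes your estimate (iii) give a small bound — namely a positive separation $d_a$ across $\partial\Omega$ — is exactly what is absent. As written, your argument proves only that $s(1-s)\per_\Omega^s(E)\to\per_\Omega(E)$ when $E$ does not jump across $\partial\Omega$, and so it does not establish the theorem in full. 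You would need to add the case $\rho_a=0$ (and likewise $\rho_b=0$), where the contribution $s(1-s)\int_a^{a+\delta}\int_{a-\delta}^a |x-y|^{-1-s}\,dy\,dx = (2-2^{1-s})\delta^{1-s}$ differs from $1$ by $O(1-s)$, exactly as in the interior blocks.
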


\begin{proof}
Consider first the interactions in $\Omega \times \Omega$. 
If $\per_{\Omega}(E) = 0$, then either~$\Omega = E \cap \Omega$ or~$\Omega = E^c \cap \Omega$ and we trivially have that
\[
s(1-s)L(E \cap \Omega, E^c \cap \Omega) =0 =  \per_{\Omega}(E).
\]

Suppose that $\per_{\Omega}(E) =: N \in \N $. Then, there exists $x_j \in \Omega$ for $j=1,\dots, N$ such that
\[
\partial E \cap \Omega = \{x_j\}_{j=1}^N \quad \hbox{and } \; x_j < x_{j+1} ~\hbox{for}~j=1,\dots, N-1. 
\]
We estimate
\begin{align*}
&|s(1-s)L(E \cap \Omega, E^c \cap \Omega)-\per_{\Omega}(E)|\\
	&\leq \left|s(1-s)\sum_{j=1}^N \int_{x_j}^{x_j+r} \int_{x_j-r}^{x_j} \frac{dydx}{|x-y|^{1+s}}-N\right| + s(1-s)\int_{\{(x,y) \in (E \cap \Omega) \times (E^c \cap \Omega) : |x-y|>r\}} \frac{dydx}{|x-y|^{1+s}} \\ 
	&\leq \left| (2-2^{1-s})r^{1-s}-1\right|N + (1-s)\frac{4|\Omega|}{r^s}.
\end{align*}
Therefore, 
\begin{equation}\label{eq:1Domegaomega}
|s(1-s)L(E \cap \Omega, E^c \cap \Omega)-\per_{\Omega}(E)|= |(2-2^{1-s})r^{1-s}-1|\per_\Omega(E) + (1-s)K_1
\end{equation}
with $K_1 >0$ depending on $r$ and the size of $\Omega$. 

Now consider the cross-terms. 
Since $\per_{\Omega \cup \Omega_{\delta_0}}(E)<\infty$, 
we fix~$\delta \in (0,\min\{1,\delta_0\})$ such that 
\[
\dist(a, (\partial E \cup \partial \Omega) \setminus \{a\}) \geq \delta \quad \hbox{and} \quad \dist(b, (\partial E \cup \partial \Omega) \setminus \{b\}) \geq \delta.
\]
Since $\delta < |\Omega|/4$, we have
\begin{equation}\label{eq:1D-cross1}
L( (a, a+\delta), (-\infty, a-\delta)\cup (b,\infty))
	\leq  \int_{\Omega} \int_{\{|x-y|\geq \delta\}} \frac{dydx}{|x-y|^{1+s}}
	= \frac{2|\Omega|}{s \delta^{s}}
\end{equation}
and similarly
\begin{equation}\label{eq:1D-cross2}
L( (b-\delta, b), (-\infty,a)\cup (b+\delta,\infty))
	\leq  \frac{2|\Omega|}{s \delta^{s}}.
\end{equation}
Thus, it is enough to consider interactions in $(a,a+\delta) \times(a-\delta,a) \subset \Omega \times \Omega^c$. Interactions in~$(b-\delta,b) \times (b,b+\delta)\subset \Omega \times \Omega^c$ will follow similarly. 

Set 
\[
\rho_a :=  \dist(a, \partial E).
\]
We will show that
\begin{equation}\label{eq:a-cross}\begin{split}
&\Big|s(1-s) [L(E \cap (a,a+\delta)), E^c \cap (a-\delta,a))+L(E \cap (a-\delta,a)), E^c \cap (a,a+\delta))]\\
&\qquad\qquad-\per_{\{a\}}(E) \Big|\\&\qquad
	\leq (1-s)K_2
\end{split}\end{equation}
for some $K_2>0$ depending on $\delta$ and $\rho_a$. We break the argument into cases. 

\medskip

\emph{Case 1}. If $\rho_a\geq \delta$, then we trivially have
\[
\big|s(1-s) L(E \cap (a,a+\delta)), E^c \cap (a-\delta,a))\big|=0
\]
and
\[
\big|s(1-s) L(E \cap (a-\delta,a)), E^c \cap (a,a+\delta))\big|=0.
\]
Thus,~\eqref{eq:a-cross} holds with $K_2=0$. 

\medskip

\emph{Case 2}. Suppose that $\rho_a \in (0,\delta)$. 
Without loss of generality, suppose that $a \in E$, so that~$(a-\rho_a, a+\rho_a) \subset E$. 
Then, we have 
\begin{align*}
s(1-s)L(E \cap (a,a+\delta)), E^c \cap (a-\delta,a))
	&\leq s(1-s)\int_a^{a+\delta} \int_{a-\delta}^{a-\rho_a} \frac{dydx}{(x-y)^{1+s}} \\
	&= (\delta +\rho_a)^{1-s}-\rho_a^{1-s} + (1-2^{1-s})\delta^{1-s} \\
	&\leq (1-s) \frac{\delta}{\rho_a^{s}}. 
\end{align*}
Similarly, 
\begin{align*}
s(1-s)L(E \cap (a-\delta,a)), E^c \cap (a,a+\delta))
	&\leq s(1-s) \int_{a-\delta}^a \int_{a+\rho_a}^{a+\delta}\frac{dydx}{(y-x)^{1+s}} 
	\leq (1-s) \frac{\delta}{\rho_a^{s}}.
\end{align*}
Therefore,~\eqref{eq:a-cross} holds with $K_2 >0$ depending on $\delta$ and $\rho_a$. 

\medskip

\emph{Case 3}. Suppose that $\rho_a=0$. In this case, $a \in \partial E$. 
Without loss of generality, suppose that~$(a, a+\delta) \subset E$ and $(a-\delta, a) \subset E^c$. 
Then, we have
\begin{align*}
\big|&s(1-s) L(E \cap (a,a+\delta)), E^c \cap (a-\delta,a))-1\big|\\
	&= \big|s(1-s) \int_{a}^{a+\delta} \int_{a-\delta}^a \frac{dydx}{|x-y|^{1+s}} -1\big|
	\leq  \big|(2-2^{1-s})\delta^{1-s} -1\big|
	 \leq 2(1-s)
\end{align*}
and
\[
s(1-s) L(E \cap (a-\delta,a)), E^c \cap (a,a+\delta)) = 0.
\]

\medskip

Combining Cases 1-3, we obtain~\eqref{eq:a-cross}.
By~\eqref{eq:1D-cross1},~\eqref{eq:1D-cross2}, and~\eqref{eq:a-cross}, there is some $K_3>0$ such that
\begin{equation}\label{eq:1Dcrossfinal}
|s(1-s)[L(E \cap \Omega, E^c \cap \Omega^c) + L(E \cap \Omega^c, E^c \cap \Omega) - \per_{\partial \Omega}(E)| \leq (1-s)K_3.
\end{equation}

The estimates~\eqref{eq:1Domegaomega} and~\eqref{eq:1Dcrossfinal} prove the result. 
\end{proof}

We also obtain uniform estimates for $\per_{\Omega}^s(A,B)$ with $s$ close to $1$. Since the proof is similar to that of Theorem~\ref{thm:CV1D} (see also Section~\ref{sec:uniformestimates}), we omit the details. 

\begin{theorem}\label{thm:CVAB1D}
Assume that $\Omega = (a,b) \subset \R$ is a bounded, open interval. 
Let~$A,B \subset \R$ be disjoint such that $\per_{\Omega \cup \Omega_{\delta_0}}(A,B)<\infty$ for some $\delta_0 \in (0,|\Omega/4)$. Set
\[
r := \frac12\min\big\{|x-y|~\hbox{s.t.}~x,y \in (\partial A \cap \partial B \cap  \Omega) \cup \partial \Omega,~x\not=y\big\}.
\]
 
Then, there exists $K>0$, depending on $r$, $\delta_0$,  $\dist(a, \partial A \cap \partial B)$, $\dist(b, \partial A \cap \partial B)$, $\dist(\partial A, \partial B)$ and  the size of $\Omega$, such that 
\[
\left| s(1-s) \per_\Omega^s(A,B) - \per_{\overline{\Omega}}(A,B)\right|
\leq |(2-2^{1-s})r^{1-s}-1|\per_\Omega(A,B) + (1-s)K. 
\]
\end{theorem}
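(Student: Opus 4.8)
The plan is to transcribe the proof of Theorem~\ref{thm:CV1D}, using the decomposition
\[
\per_\Omega^s(A,B) = L^s(A\cap\Omega, B\cap\Omega) + L^s(A\cap\Omega, B\cap\Omega^c) + L^s(A\cap\Omega^c, B\cap\Omega)
\]
recorded in Section~\ref{sec:notation}, and treating the interior term and the two cross terms separately. For the interior term I would argue as in Section~\ref{sec:uniformestimates}: setting $\Omega' := (A\cup B)\cap\Omega$ and using that $A$ and $B$ are disjoint gives $A^c\cap\Omega' = B\cap\Omega$ up to a null set, hence
\[
L^s(A\cap\Omega, B\cap\Omega) = L^s(A\cap\Omega', A^c\cap\Omega').
\]
The hypothesis $\per_{\Omega\cup\Omega_{\delta_0}}(A,B)<\infty$ forces $\partial A\cap\partial B\cap\Omega$ to be a finite set $\{x_1<\dots<x_N\}$ with $N=\per_\Omega(A,B)$ (by Lemma~\ref{lem:JAB}), and near each $x_j$ the phases $A$ and $B$ occupy opposite sides up to the scale $r$. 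Thus the one-dimensional interior estimate derived in the proof of Theorem~\ref{thm:CV1D}, namely~\eqref{eq:1Domegaomega}, applies with $A,\Omega'$ in place of $E,\Omega$ and with the present $r$, yielding
\[
\bigl| s(1-s) L^s(A\cap\Omega, B\cap\Omega) - \per_\Omega(A,B)\bigr| \leq |(2-2^{1-s})r^{1-s}-1|\,\per_\Omega(A,B) + (1-s)K_1 ,
\]
where $K_1$ absorbs the long-range part of the interaction and depends on the separation of the phases (measured by $\dist(\partial A,\partial B)$), which is what makes the remainder genuinely of order $1-s$. When $N=0$ the bound is immediate from Lemma~\ref{lem:separated}.

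For the cross terms I would repeat Cases~1--3 of the proof of Theorem~\ref{thm:CV1D} almost verbatim. Write $\Omega=(a,b)$ and fix $\delta\in(0,\min\{1,\delta_0\})$ smaller than $\delta_0$ and than the distances from $a$ and $b$ to the remaining points of $(\partial A\cap\partial B)\cup\partial\Omega$ and to the phase they do not touch. Pairs $(x,y)$ with $|x-y|\geq\delta$ contribute at most $(1-s)\,2|\Omega|/(s\delta^s)$ after renormalization, leaving interactions within $\delta$ of $a$ and of $b$. At $a$ (and symmetrically at $b$) there are two situations: if $a$ lies in the measure-theoretic interior of one of $A,B,E$, or is a boundary point between one of $A,B$ and $E$, then at most one of $A,B$ meets the punctured $\delta$-neighbourhood of $a$ and the renormalized contribution is $O(1-s)=\per_{\{a\}}(A,B)$; if instead $a\in\partial A\cap\partial B$, then one side of $a$ is filled by $A$ and the other by $B$ up to scale $\delta$, so exactly as in Case~3 the renormalized contribution equals $(2-2^{1-s})\delta^{1-s}=1+O(1-s)=\per_{\{a\}}(A,B)+O(1-s)$. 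Summing over $a$ and $b$ gives
\[
\bigl| s(1-s)\,[L^s(A\cap\Omega, B\cap\Omega^c) + L^s(A\cap\Omega^c, B\cap\Omega)] - \per_{\partial\Omega}(A,B)\bigr| \leq (1-s)K_2 .
\]

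Combining the two displays and using $\per_{\overline\Omega}(A,B)=\per_\Omega(A,B)+\per_{\partial\Omega}(A,B)$ gives the asserted bound with $K=K_1+K_2$, and letting $s\nearrow1$ (so that $(2-2^{1-s})r^{1-s}\to1$) gives the limit. The main obstacle, and the only feature genuinely new compared with the two-phase Theorem~\ref{thm:CV1D}, is the bookkeeping near the interfaces: because $\{A,B,E\}$ is a true partition, $A$ and $B$ need not be complementary near a touching point, so one must verify that an intervening sliver of phase $E$ does not disturb the leading short-range asymptotics (this is precisely where the dependence of $K$ on the inter-phase distances enters), and that the cross-term case analysis at $\partial\Omega$ returns $\per_{\partial\Omega}(A,B)$ rather than $\per_{\partial\Omega}(E)$. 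Everything else is a literal transcription of the one-phase argument.
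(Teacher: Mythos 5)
Your proposal is correct and follows the route the paper implicitly intends: the paper itself omits the argument, remarking only that it is ``similar to that of Theorem~\ref{thm:CV1D} (see also Section~\ref{sec:uniformestimates})'', and you use precisely the substitution $\Omega':=(A\cup B)\cap\Omega$ from Section~\ref{sec:uniformestimates} to reduce the interior interaction to the two-phase estimate, then rerun the Cases~1--3 boundary analysis for the cross terms. You also correctly flag the genuinely new feature, namely that $\{A,B,E\}$ is a true partition, so the near-interface asymptotics and the dependence of $K$ on the inter-phase distances require justification.

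One point worth tightening: the literal substitution of $(A,\Omega')$ for $(E,\Omega)$ in Theorem~\ref{thm:CV1D} does not produce the theorem's $r$ but a smaller scale $r'$, since $\partial\Omega'$ contains all of $\partial A\cup\partial B$, not merely $\partial A\cap\partial B$; in particular the leading near-range contribution at a touching point $x_j$ is not exactly $(2-2^{1-s})r^{1-s}$ but rather involves the widths of the adjacent $A$- and $B$-intervals, which may be shorter than $r$ if a sliver of $E$ intervenes. Since $\bigl|(2-2^{1-s})(r')^{1-s}-(2-2^{1-s})r^{1-s}\bigr|$ is $O(1-s)$ with constant depending on $r$ and $r'$ (equivalently, on the separation of $\partial A$ and $\partial B$), this mismatch is absorbed into $(1-s)K_1$ exactly as you indicate, but it is a short-range correction rather than a long-range one; phrasing $K_1$ as ``absorbing the long-range part'' understates its role.
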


\subsection{$\Gamma$-convergence in 1D up to the boundary}

Theorem~\ref{thm:s-gamma}  holds in dimension one as written. 
Here, we prove a refined result up to the boundary.  

\begin{theorem}\label{thm:1D-gamma}
Let~$\Omega \subset \R$ be an open, bounded interval and assume that $\alpha_0\leq0$. 

Then $(1-s) \F^s(\E,\Omega)$ $\Gamma$-converges to $\F^*(\E,\overline{\Omega})$ within the class of admissible $\E$  such that 
$\per_{(\Omega_{\delta}^+)^o}(E_i)<\infty$ for some $\delta>0$ and for each $-1 \leq i \leq 1$.
\end{theorem}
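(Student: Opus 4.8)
The plan is to obtain the two $\Gamma$-convergence inequalities separately, that is, to prove the one-dimensional analogues of Propositions~\ref{prop:bdry-liminf} and~\ref{prop:bdry-limsup} (Theorem~\ref{thm:s-gamma} already supplies the interior statement when $n=1$), the $\Gamma$-convergence being understood, as there, within the class of admissible competitors with prescribed exterior data. Throughout, $\Omega=(a,b)$, and one exploits the representation~\eqref{eq:Fs*}, the identity~\eqref{eq:F*2} (with $\omega_0=1$), Lemma~\ref{lem:separated}, and the sharp one-dimensional estimates of Theorems~\ref{thm:CV1D} and~\ref{thm:CVAB1D}. A pleasant simplification of the one-dimensional case is that these estimates already produce $\per_{\overline\Omega}$ rather than $\per_\Omega$, so — in contrast to Proposition~\ref{prop:bdry-limsup} — no transversality at $\partial\Omega$ is needed; moreover any set of finite perimeter near $\overline\Omega$ already coincides, up to a null set, with a finite union of intervals, so no preliminary polyhedral approximation is required.

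For the $\liminf$ inequality, given $s_k\nearrow1$ and admissible $\E^k\to\E$ in $L^1_{\text{loc}}(\R^n)$ with $E_i^k\cap\Omega^c=E_i\cap\Omega^c$, the first step is the one-dimensional analogue of Lemma~\ref{lem:bdry-liminf-oneset}: for $i\in\{-1,1\}$, apply Theorem~\ref{thm:ADPM} (valid for $n\ge1$, with $\omega_0=1$) on the enlarged interval $\Omega\cup\Omega_\delta$, decompose $L^{s_k}\big(E_i^k\cap(\Omega\cup\Omega_\delta),(E_i^k)^c\cap(\Omega\cup\Omega_\delta)\big)$ exactly as in the proof of Lemma~\ref{lem:bdry-liminf-oneset}, and absorb the frozen exterior-collar interactions using the estimate for $L\big(E\cap(\Omega_\delta^+)^o,E^c\cap(\Omega_\delta^+)^o\big)$ contained in the proof of Theorem~\ref{thm:CV1D} (the assumption $\per_{(\Omega_\delta^+)^o}(E_i)<\infty$ playing the role of the Lipschitz hypothesis). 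This yields
\[
\liminf_{k\to\infty}(1-s_k)\per_\Omega^{s_k}(E_i^k)\ \ge\ \per_{\Omega\cup\Omega_\delta}(E_i)-\per_{(\Omega_\delta^+)^o}(E_i)\ =\ \per_{\overline\Omega}(E_i).
\]
Since $\alpha_0\le0$, dropping the nonnegative term $-2\alpha_0\per_\Omega^{s_k}(E_{-1}^k,E_1^k)$ in~\eqref{eq:Fs*} and summing with the positive weights $\alpha_i^*$ gives $\liminf_k(1-s_k)\F^{s_k}(\E^k)\ge\sum_{i\in\{-1,1\}}\alpha_i^*\per_{\overline\Omega}(E_i)=\F^*(\E,\overline\Omega)$ by~\eqref{eq:F*2}.

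For the $\limsup$ inequality one may assume $\F^*(\E,\overline\Omega)<\infty$, so that $\per_{\overline\Omega}(E_i)<\infty$ and each $E_i$ is, near $\overline\Omega$, a finite union of intervals. Let $P:=\partial E_1\cap\partial E_{-1}\cap\overline\Omega$, a finite set; at each $p\in P$ exactly one of $E_1,E_{-1}$ occupies a one-sided neighbourhood of $p$ lying in $\Omega$ (automatically at interior points, and at a point of $\partial\Omega$ because the complementary side is exterior and frozen). For small $\eps>0$ set $A^\eps:=\big(\bigcup_{p\in P}(p-\eps,p+\eps)\big)\cap\Omega\subset\Omega$ and put $E_i^\eps:=E_i\setminus A^\eps$ for $i\in\{-1,1\}$, $E_0^\eps:=E_0\cup A^\eps$; then $\E^\eps$ is admissible, has the prescribed exterior data, and $\E^\eps\to\E$ in $L^1_{\text{loc}}(\R^n)$ as $\eps\searrow0$. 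For $\eps$ small this ``buffering'' replaces each $p\in P$ by a boundary point $p\pm\eps$, still in $\overline\Omega$, for whichever phase was interior there, so that $\per_{\overline\Omega}(E_i^\eps)=\per_{\overline\Omega}(E_i)$ for $i\in\{-1,1\}$; at the same time $\partial E_1^\eps\cap\partial E_{-1}^\eps\cap\overline\Omega=\emptyset$, whence $E_1^\eps\cap\Omega$ and $E_{-1}^\eps$ (and symmetrically $E_{-1}^\eps\cap\Omega$ and $E_1^\eps$) lie at positive distance, so Lemma~\ref{lem:separated} applies to the cross term. Using Theorem~\ref{thm:CV1D} for each $\per_\Omega^{s_k}(E_i^\eps)$, $i\in\{-1,1\}$ (their interfaces being finitely many distinct points, so $r>0$), Lemma~\ref{lem:separated} for $\per_\Omega^{s_k}(E_{-1}^\eps,E_1^\eps)$, and~\eqref{eq:Fs*} together with~\eqref{eq:F*2}, one finds $\lim_k(1-s_k)\F^{s_k}(\E^\eps)=\sum_{i\in\{-1,1\}}\alpha_i^*\per_{\overline\Omega}(E_i)=\F^*(\E,\overline\Omega)$ for every small $\eps$; a standard diagonal extraction over $\eps=\eps(k)\searrow0$ then produces the recovery sequence.

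The main obstacle is the recovery step: the ``non-triangle'' term $-2\alpha_0(1-s_k)\per_\Omega^{s_k}(E_{-1}^\eps,E_1^\eps)$ is nonnegative, so it cannot be discarded from above and must genuinely be driven to zero, which forces $E_1^\eps$ and $E_{-1}^\eps$ to be separated rather than merely touching — and this separation has to be produced without increasing $\per_{\overline\Omega}(E_{\pm1})$, which is delicate precisely at interface points lying on $\partial\Omega$, where the exterior data is frozen. The resolution is the observation above that at every common interface point in $\overline\Omega$ one phase can be pushed inward by an $\eps$-strip of phase $0$ without creating new boundary in $\overline\Omega$ for $E_{\pm1}$. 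The $\liminf$ direction is comparatively routine once Theorem~\ref{thm:ADPM}, Theorem~\ref{thm:CV1D}, and the sign condition $\alpha_0\le0$ are available.
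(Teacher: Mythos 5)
Your proposal is correct and follows essentially the same route as the paper: the recovery sequence is built by excising $\eps$-neighbourhoods of $\partial E_1\cap\partial E_{-1}\cap\overline\Omega$ inside $\Omega$ and absorbing them into $E_0$, then combining Theorem~\ref{thm:CV1D}, the decomposition~\eqref{eq:Fs*}, and Lemma~\ref{lem:separated} — exactly the content of the paper's Proposition~\ref{prop:1D-limsup} — while the $\liminf$ part is the one-dimensional version of Proposition~\ref{prop:bdry-liminf}. Your additional observations (that $\per_{\overline\Omega}(E_i^\eps)=\per_{\overline\Omega}(E_i)$ holds exactly for small $\eps$ because the buffering pushes boundary points only within $\overline\Omega$, and that the $\per_{\overline\Omega}$ normalization removes the need for transversality at $\partial\Omega$) are correct refinements of what the paper states more tersely.
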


Since the $\liminf$-inequality in Proposition~\ref{prop:bdry-liminf} holds in dimension one,  it is enough to build a recovery sequence. 

\begin{proposition}\label{prop:1D-limsup}
Let~$\Omega \subset \R$ be an open, bounded interval and assume that $\alpha_0<0$. 
Suppose that $\E$ is admissible and such that 
$\per_{(\Omega_\delta^+)^o}(E_i)<\infty$ for each $-1 \leq i \leq 1$.

Let~$s_k \nearrow 1$ as $k \to \infty$. 
Then, there exists a sequence of admissible $\E^k$ such that 
$E_i^k \cap \Omega^c = E_i \cap \Omega^c$ for each $-1 \leq i \leq 1$, $\E^k \to \E$ in $L^1_{\text{loc}}(\R)$ as $k \to \infty$, 
and
\begin{equation}\label{eq:1D-limsup}
\limsup_{k \to \infty} (1-s_k) \F^{s_k}(\E^k,\Omega) \leq  \F^*(\E,\overline{\Omega}). 
\end{equation}
\end{proposition}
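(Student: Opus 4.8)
The plan is to transplant the strip construction from the proof of Proposition~\ref{prop:bdry-limsup} into $\R$, replacing Theorem~\ref{thm:CV} by its one-dimensional counterpart Theorem~\ref{thm:CV1D} (which already builds in the correct boundary behaviour, so that no preliminary polyhedral/transversality normalization is required). We may assume $\F^*(\E,\overline\Omega)<\infty$; by the analogue of~\eqref{eq:F*2} over $\overline\Omega$, namely $\F^*(\E,\overline\Omega)=\sum_{i\in\{-1,1\}}\alpha_i^*\per_{\overline\Omega}(E_i)$ with $\alpha_i^*=\sigma_{i,0}>0$, this forces $\per_{\overline\Omega}(E_i)<\infty$ for $i\in\{-1,1\}$. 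After shrinking $\delta$ so that $\delta<|\Omega|/4$, the hypothesis $\per_{(\Omega_\delta^+)^o}(E_i)<\infty$ together with $\per_{\overline\Omega}(E_i)<\infty$ shows that, up to a null set, $E_i$ is a finite union of intervals inside $\Omega\cup\Omega_\delta$; in particular $\per_{\Omega\cup\Omega_\delta}(E_i)<\infty$ and the set $P:=\partial E_1\cap\partial E_{-1}\cap\overline\Omega$ of points where phases $1$ and $-1$ are directly adjacent inside $\overline\Omega$ is finite.

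For $\eps>0$ small I would carve an $\eps$-strip of phase $0$ at each $p\in P$: one of the two sides of $p$ is occupied by $E_1$, the other by $E_{-1}$, and at least one side lies in $\Omega$ (both, if $p\in\Omega$); choosing such an interior side, replace by phase $0$ the open subinterval of length $\eps$ of it abutting $p$. Letting $E_0^\eps$ be $E_0$ together with these finitely many intervals and $E_i^\eps:=E_i\setminus E_0^\eps$ for $i\in\{-1,1\}$, one obtains, for $\eps$ small enough, an admissible cluster $\E^\eps$ such that: $E_i^\eps\cap\Omega^c=E_i\cap\Omega^c$ and $\chi_{E_i^\eps}\to\chi_{E_i}$ in $L^1_{\text{loc}}(\R)$ as $\eps\searrow0$; each $E_i^\eps$ is still of finite perimeter in $\Omega\cup\Omega_\delta$; $E_{-1}^\eps$ and $E_1^\eps$ are at positive distance, in the sense that $\dist(E_{-1}^\eps\cap\Omega,E_1^\eps)>0$ and $\dist(E_{-1}^\eps,E_1^\eps\cap\Omega)>0$ (inside $\Omega\cup\Omega_\delta$ they are disjoint finite unions of intervals sharing no common endpoint, while $E_i^\eps\cap\Omega$ lies at distance $\ge\delta$ from the complement of $\Omega\cup\Omega_\delta$); and $\partial E_1^\eps\cap\partial E_{-1}^\eps\cap\overline\Omega=\emptyset$, each boundary point of $E_i$ in $\overline\Omega$ being merely displaced by at most $\eps$ without leaving $\overline\Omega$. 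The last two facts give $\F^*(\E^\eps,\overline\Omega)=\sum_{i\in\{-1,1\}}\alpha_i^*\per_{\overline\Omega}(E_i^\eps)=\sum_{i\in\{-1,1\}}\alpha_i^*\per_{\overline\Omega}(E_i)=\F^*(\E,\overline\Omega)$.

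With $\eps$ fixed I would then send $k\to\infty$. Applying Theorem~\ref{thm:CV1D} to $E_i^\eps$ for $i\in\{-1,1\}$ — its constants depend on $\eps$ but not on $k$, and the prefactor $|(2-2^{1-s})r^{1-s}-1|$ tends to $0$ as $s\nearrow1$ for any fixed $r>0$ — gives $s_k(1-s_k)\per_\Omega^{s_k}(E_i^\eps)\to\per_{\overline\Omega}(E_i^\eps)$, hence $(1-s_k)\per_\Omega^{s_k}(E_i^\eps)\to\per_{\overline\Omega}(E_i^\eps)$; and since $E_{-1}^\eps$ and $E_1^\eps$ are separated, Lemma~\ref{lem:separated} gives $(1-s_k)\per_\Omega^{s_k}(E_{-1}^\eps,E_1^\eps)\to0$. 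Plugging these into identity~\eqref{eq:Fs*} yields
\[
\lim_{k\to\infty}(1-s_k)\F^{s_k}(\E^\eps)=\sum_{i\in\{-1,1\}}\alpha_i^*\per_{\overline\Omega}(E_i^\eps)=\F^*(\E,\overline\Omega).
\]
Finally, a diagonal extraction — picking $\eps=\eps_\ell\searrow0$ and then $k$ large depending on $\ell$, exactly as at the end of the proof of Proposition~\ref{prop:s-limsup} — produces admissible $\E^k\to\E$ in $L^1_{\text{loc}}(\R)$ with $E_i^k\cap\Omega^c=E_i\cap\Omega^c$ and $\limsup_{k\to\infty}(1-s_k)\F^{s_k}(\E^k,\Omega)\le\F^*(\E,\overline\Omega)$, which is~\eqref{eq:1D-limsup}.

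The step I expect to require the most care is the bookkeeping at contact points $p\in P$ lying on $\partial\Omega$: there only the side of $p$ interior to $\Omega$ may be modified, so one must check both that this still separates $E_{-1}^\eps$ from $E_1^\eps$ (it does, since the removed interval is contained in $\Omega$) and that the relaxed energy $\F^*(\,\cdot\,,\overline\Omega)$ is not increased — this last point relies precisely on the energy being measured on the \emph{closure} $\overline\Omega$, so that sliding a boundary point inward by $\eps$ neither creates nor destroys a contribution to $\per_{\overline\Omega}(E_i)$. The other point needing attention is ruling out accumulation of $\pm1$ contact points, which is exactly where the finiteness hypotheses $\per_{(\Omega_\delta^+)^o}(E_i)<\infty$ and $\per_{\overline\Omega}(E_i)<\infty$ are used.
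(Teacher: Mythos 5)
Your proof is correct and follows essentially the same route as the paper: carve thin strips of phase~$0$ at the finitely many $E_1$--$E_{-1}$ contact points in~$\overline\Omega$, apply Theorem~\ref{thm:CV1D} to each $E_i^\eps$ ($i=\pm1$), use Lemma~\ref{lem:separated} to kill the $\per_\Omega^{s_k}(E_{-1}^\eps,E_1^\eps)$ term via~\eqref{eq:Fs*}, then diagonalize. The paper takes the symmetric strips $(x_j-\eps,x_j+\eps)\cap\Omega$ while you take one-sided interior strips, but either choice preserves the closure perimeter and yields the separation needed for Lemma~\ref{lem:separated}; the paper also contents itself with $\sum_i\alpha_i^*\per_{\overline\Omega}(E_i)\geq\lim_{\eps\searrow0}\sum_i\alpha_i^*\per_{\overline\Omega}(E_i^\eps)$ rather than your exact equality $\F^*(\E^\eps,\overline\Omega)=\F^*(\E,\overline\Omega)$, but only the inequality is used. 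Your added justification that the contact set $P$ is finite (by combining $\per_{\overline\Omega}(E_i)<\infty$ from $\F^*(\E,\overline\Omega)<\infty$ with the hypothesis $\per_{(\Omega_\delta^+)^o}(E_i)<\infty$, so $E_i$ is a.e.\ a finite union of intervals in $\Omega\cup\Omega_\delta$) spells out a point the paper simply postulates, and is a genuine improvement in completeness; the rest matches.
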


\begin{proof}
Assume that $\F^*(\E,\overline{\Omega})<\infty$; otherwise there is nothing to show. 

Suppose that there is a finite sequence $\{x_j\}_{j=1}^N \subset\overline{\Omega}$ such that
\[
\partial E_1 \cap \partial E_{-1} \cap \overline{\Omega} = \{x_j\}_{j=1}^N,
\]
and  there is $\eps_0 \leq |\Omega|/2$ such that
\[
\dist(x_j, x_\ell)> 2\eps_0 \quad \hbox{for all}~j\not=\ell. 
\]

For $\eps \in (0,\eps_0)$, define $A^\eps \subset \Omega$ by 
\[
A^\eps := \bigcup_{j=1}^N (x_j-\eps, x_j+\eps)  \cap \Omega.
\]
If $\per_{\overline{\Omega}}(E_1, E_{-1}) = 0$, then take $A_\eps = \emptyset$.
 
Also, we define the admissible $\E^\eps$ by
\[
E_1^\eps = E_1 \cap (A^\eps)^c, \quad 
E_{-1}^\eps = E_{-1} \cap (A^\eps)^c, \quad 
E_0^\eps = E_0 \cup A^\eps. 
\]
Notice that $E_i^\eps \cap \Omega^c = E_i \cap \Omega^c$ for each $-1 \leq i \leq 1$,  $\E^\eps \to \E$  in $L^1_{\text{loc}}(\R)$ as $\eps \searrow 0$,
 and 
\[
\sum_{i \in \{-1,1\}} \alpha_i^*\per_{\overline{\Omega}}(E_i) \geq \lim_{\eps \searrow 0}\sum_{i \in \{-1,1\}} \alpha_i^*\per_{\overline{\Omega}}(E_i^\eps). 
\]
Proceeding as in the proof of Proposition~\ref{prop:bdry-limsup}, we use Theorem~\ref{thm:CV1D},~\eqref{eq:Fs*} and Lemma~\ref{lem:separated} to complete the proof. 
\end{proof}

\end{appendix}
\section*{Acknowledgments}

This work has been supported by the Australian Future Fellowship
FT230100333 ``New perspectives on nonlocal equations''
and by the Australian Laureate Fellowship FL190100081 ``Minimal surfaces, free boundaries and partial differential equations.''

\bibliographystyle{imsart-number}
\bibliography{gamma}

\end{document}